\newtheorem{theorem}{Theorem}[section]
\newtheorem{fact}[theorem]{Fact}
\newtheorem{lemma}[theorem]{Lemma}
\newtheorem{corollary}[theorem]{Corollary}
\newtheorem{proposition}[theorem]{Proposition}
\newtheorem{observation}[theorem]{Observation}
\theoremstyle{definition}
\newtheorem{definition}[theorem]{Definition}
\newtheorem{example}[theorem]{Example}
\newtheorem{remark}[theorem]{Remark}
\newtheorem{question}[theorem]{Question}
\newtheorem*{theorem*}{Theorem}
\def\tp{\operatorname{tp}}
\def\dfs{\operatorname{dfs}}
\def\Av{\operatorname{Av}}
\def\EEM{\operatorname{EEM}}
\def\EM{\operatorname{EM}}
\def\conv{\operatorname{conv}}
\def\cl{\operatorname{cl}}
\def\Ind{\setbox0=\hbox{$x$}\kern\wd0\hbox to 0pt{\hss$\mid$\hss}
\lower.9\ht0\hbox to 0pt{\hss$\smile$\hss}\kern\wd0}
\def\Notind{\setbox0=\hbox{$x$}\kern\wd0\hbox to 0pt{\mathchardef
\nn=12854\hss$\nn$\kern1.4\wd0\hss}\hbox to
0pt{\hss$\mid$\hss}\lower.9\ht0 \hbox to 0pt{\hss$\smile$\hss}\kern\wd0}
\title{Sequential Approximations for Types and Keisler Measures}
\author[K. Gannon]{Kyle Gannon}
\date{\today}
\address{Department of Mathematics\\
University of California, Los Angeles\\
Los Angeles, CA, 90095, USA}
\email{gannon@math.ucla.edu}
\begin{document}

\begin{abstract} This paper is a modified chapter of the author's Ph.D. thesis. We introduce the notions of sequentially approximated types and sequentially approximated Keisler measures. As the names imply, these are types which can be approximated by a sequence of realized types and measures which can be approximated by a sequence of ``averaging measures" on tuples of realized types. We show that both generically stable types (in arbitrary theories) and Keisler measures which are finitely satisfiable over a countable model (in NIP theories) are sequentially approximated. We also introduce the notion of a smooth sequence in a measure over a model and give an equivalent characterization of generically stable measures (in NIP theories) via this definition. In the last section, we take the opportunity to generalize the main result of \cite{GannNIP}.  
\end{abstract}

\keywords{Keisler measures, NIP, generic stability; \textit{MSC2020}: 03C45, 03C68}

\maketitle
\vspace{-15pt} 

\section{Introduction} 
One of the joys of working in a metric space is that the closure of a set coincides with its \textit{sequential closure}. In particular, if $X$ is a metric space, $A$ is a subset of  $X$, and $b$ is in the closure of $A$, then there exists a sequence of elements in $A$ which converges to $b$. In \cite{Invariant}, Simon showed that global types which are finitely satisfiable over a countable model of a countable NIP theory admit a similar property. Let $T$ be a complete, first-order theory, $\mathcal{U}$ a monster model of $T$, and $M$ a small submodel of $\mathcal{U}$. Simon proved the following (\cite[Lemma 2.8]{Invariant}):
\begin{theorem}\label{sim:conv} Let $T$ be a countable NIP theory. Suppose $p$ is a type in $S_{x}(\mathcal{U})$ and $p$ is finitely satisfiable over $M$ where $|M| = \aleph_0$. Then there exists a sequence of points $(a_{i})_{i \in \omega}$ in $M^{x}$ such that $\lim_{i\to \infty} \tp(a_i/\mathcal{U}) = p$.
\end{theorem}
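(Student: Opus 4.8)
The plan is to reformulate convergence in $S_x(\mathcal{U})$ as pointwise convergence of Boolean functions on a Polish space, and then to apply the Bourgain--Fremlin--Talagrand machinery, with NIP supplying the one nonformal input. The first observation is that every type in sight is $M$-invariant: if $a\in M^{x}$ then $\tp(a/\mathcal{U})$ is fixed setwise by $\Aut(\mathcal{U}/M)$, and $p$ is $M$-invariant because it is finitely satisfiable over $M$. Hence for any such type $q$ and any formula $\phi(x,y)$, whether $\phi(x,b)\in q$ depends only on $\tp(b/M)$, so $q$ induces a Boolean function $F^{q}_{\phi}\colon S_{y}(M)\to\{0,1\}$. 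Since $T$ is countable and $|M|=\aleph_0$, each $S_{y}(M)$ is compact metrizable; bundling the countably many formulas together, the disjoint union $Z=\bigsqcup_{\phi}S_{y}(M)$ is Polish, and each realized type gives a \emph{continuous} function $F^{a}\colon Z\to\{0,1\}$. A direct check shows that $\tp(a_i/\mathcal{U})\to p$ in $S_x(\mathcal{U})$ if and only if $F^{a_i}\to F^{p}$ pointwise on $Z$, and that finite satisfiability of $p$ over $M$ says exactly that $F^{p}$ lies in the pointwise closure of the \emph{countable} family $\{F^{a}:a\in M^{x}\}$.

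With this translation in hand, I would invoke the theorem of Bourgain, Fremlin and Talagrand that a pointwise-compact set of Baire class $1$ functions on a Polish space is Fr\'echet--Urysohn: every point of the closure of a subset is then the limit of a sequence drawn from that subset. Applying this to $\{F^{a}:a\in M^{x}\}$ and its cluster point $F^{p}$ would produce the desired sequence $(a_i)$ in $M^{x}$ with $F^{a_i}\to F^{p}$, hence $\tp(a_i/\mathcal{U})\to p$. The task therefore reduces to verifying the hypothesis, namely that the pointwise closure of $\{F^{a}:a\in M^{x}\}$ consists of Baire class $1$ functions.

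This last point is where NIP is essential and is, I expect, the main obstacle. By Rosenthal's dichotomy, a uniformly bounded sequence of continuous functions either has the property that every subsequence has a pointwise-convergent further subsequence, with all cluster points of Baire class $1$, or else it contains a ``Boolean independent'' subsequence realizing every finite $0/1$ pattern. A Boolean independent subsequence $(a_{i_k})$ of our family would provide, in a single coordinate $\phi$, types $r\in S_{y}(M)$ witnessing all Boolean combinations of the conditions $\phi(a_{i_k},y)$; realizing these $r$ by tuples $b$ exhibits $\phi(x,y)$ with the independence property, the $a_{i_k}$ sitting on the $x$-side. Since $T$ is NIP this cannot occur, so no such subsequence exists and the closure is made of Baire class $1$ functions, as required.

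I would also flag one alternative, more hands-on route and why I avoid it. One can instead fix a countable elementary extension $N\succeq M$ (equivalently a countable dense set of parameter-types), use metrizability of $S_x(N)$ to extract by diagonalization a single sequence converging to $p|_{N}$, and arrange $(a_i)$ to be indiscernible so that the NIP alternation bound forces $\lim_i\tp(a_i/\mathcal{U})$ to exist over all of $\mathcal{U}$. The difficulty is that agreement of two Baire class $1$ functions on a dense set of parameter-types does not force agreement everywhere, so pinning the global limit down to $p$ rather than to some other $M$-finitely-satisfiable type requires exactly the control that the Bourgain--Fremlin--Talagrand theorem packages cleanly; for that reason I would organize the proof around the functional-analytic statement.
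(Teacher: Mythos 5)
Your argument is correct in outline, but it takes a genuinely different route from the one the paper relies on. The paper does not reprove this statement: it is quoted as Simon's \cite[Lemma 2.8]{Invariant}, and the proof the paper describes (and then adapts for its results (T1) and (T2)) is the elementary one: fix a Morley sequence $I$ in $p$ over $M$, use finite satisfiability to produce points of $M^{x}$ whose types converge to $p|_{MI}$, pass to an eventually indiscernible subsequence by Ramsey's theorem and diagonalization, identify its eventual EM-type with $p^{\omega}|_{M}$, and then use the NIP alternation bound on indiscernible sequences realizing that type to force global convergence to $p$. Your route instead encodes convergence as pointwise convergence of $\{0,1\}$-valued functions on the Polish space $\bigsqcup_{\phi}S_{y}(M)$ and invokes Bourgain--Fremlin--Talagrand; this is a known alternative (the paper itself points to BFT-based arguments for the local and measure-theoretic analogues, e.g.\ \cite{SimonBFT}, \cite{Khanaki1}, \cite{Khanaki3}), and your translations of finite satisfiability into membership in the pointwise closure, and of NIP into relative compactness in the Baire class $1$ functions, are the standard and correct ones. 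What BFT buys is a clean, conceptual argument at the cost of importing a substantial functional-analytic theorem; what the paper's route buys is self-containedness and, more importantly, portability: the eventually-indiscernible-sequence method is exactly what the paper reuses for generically stable types in arbitrary (possibly IP) theories, where Rosenthal's dichotomy has nothing to feed on, and for measures, where Morley sequences are replaced by smooth sequences. One detail you should make explicit: a Boolean independent subsequence extracted on the disjoint union could a priori have its witnessing parameter-types scattered across different components, so the reduction to IP of a single formula is cleanest if you verify relative compactness in the Baire class $1$ functions formula by formula (for each $\phi$, extract a subsequence convergent on $S_{y}(M)$ using NIP of $\phi$, then diagonalize over the countably many formulas); this is also precisely where countability of the language and of $M$ enters.
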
 
One of the goals of this paper is to \textit{morally} generalize the proof of the above theorem in two different directions. By mimicking Simon's proof, we are able to prove the following,
\begin{enumerate}[(T$1$)]
    \item Let $T$ be any countable theory. Suppose $p$ is a type in $S_{x}(\mathcal{U})$ and $p$ is generically stable over $M$. Then there exists a sequence of points $(a_i)_{i \in \omega}$ in $M^{x}$ such that $\lim_{i \to \infty} \tp(a_i/\mathcal{U}) = p$.
    \item Let $T$ be a countable NIP theory. Suppose $\mu$ is a Keisler measure in $\mathfrak{M}_{x}(\mathcal{U})$ and $\mu$ is finitely satisfiable over $M$ where $|M| = \aleph_0$. Then there exists a sequence of points $(\overline{a}_{i})_{i \in \omega}$ in $(M^{x})^{< \omega}$ such that $\lim_{i \to \infty} \Av(\overline{a}_{i}) = \mu$. More explicitly, for any formula $\varphi(x)$ in $\mathcal{L}_{x}(\mathcal{U})$, we have that 
 \begin{equation*} \lim_{i \to \infty} \Av(\overline{a}_{i})(\varphi(x)) = \mu(\varphi(x)).
 \end{equation*} 
\end{enumerate}

The proofs of both of these theorems are slightly more \textit{enjoyable} than one would anticipate. For example, we already know many diverse and useful approximation theorems for measures in NIP theories (and some for generically stable types in arbitrary theories) and so one might expect that our proofs rely on composing approximation techniques. However, stringing together different approximation methods can result in an array with some kind of \textit{modes-of-convergence} problem. 

As stated previously, the technique used to prove both these theorems mimics the argument used in \cite[Lemma 2.8]{Invariant}. In the generically stable case, the set up is identical: Suppose $p$ is in $S_{x}(\mathcal{U})$ where $p$ is generically stable over $M$ and $I$ is a Morley sequence in $p$ over $M$. As in Simon's proof, we use both $M$ and $I$ to find an eventually indiscernible sequence of points in $M^{x}$ which converge to $p|_{MI}$. The \textit{eventual EM-type} of this sequence over $M$ is precisely $p^{(\omega)}|_{M}$. Using generic stability and compactness, we conclude that this sequence must converge to $p$. 

Our proof of the Keisler measure case is slightly more exotic since there is no standard notion of a ``Morley sequence in a Keisler measure". The proof we provide is \textit{essentially} done in first order model theory (with an important exceptional lemma following from Ben Yaacov's work on randomizations \cite{Ben}). We expect that there exists other proofs using other methods such as continuous model theory\footnote{In fact, after this paper was posted to arXiv, another proof was discovered by Khanaki using BFT on an infinite product space \cite{Khanaki3}.}. The proof we give here embraces the ideology first developed in \cite{HPS} and shows that this can be resolved by replacing the Morley sequence (in Simon's proof) by a \textit{smooth sequence in $\mu$ over $M$}. This provides more evidence for the intuition that smooth measures can play the role of realized types, at least in the NIP context. After constructing a countable model $N_{\omega}$ ``containing this sequence", we find a sequence of points in $(M^{x})^{<\omega}$ such that the corresponding average measures on these tuples converge to $\mu|_{N_{\omega}}$. After constructing an eventually indiscernible subsequence in this context, we are able to readapt most of Simon's proof technique by making use of known approximation theorems, symmetry properties, and some basic integration techniques.

It is interesting to note that one can give another equivalent characterization of generically stable measures in NIP theories using smooth sequences. This characterization highlights the connection between generically stable types and generically stable measures. Recall that a type $p$ is generically stable over a model $M$ if for every Morley sequence $(a_i)_{i \in \omega}$ in $p$ over $M$, $\lim_{i \to \infty} \tp(a_i/\mathcal{U}) = p$. We show that in an NIP theory, a measure $\mu$ is generically stable over a model $M$ if and only if for every \textit{smooth sequence} in $\mu$ over $M$, the limit of this sequence is precisely $\mu$. 

In addition to proving these theorems, we also introduce the classes of \textit{sequentially approximated measures} and \textit{sequentially approximated types}. These definitions can be seen as the \textit{global analogue} to Khanaki's definition of \textit{Baire 1 definability} for local types (see \cite{Khanaki2}). Sequentially approximated measures should be thought of as a ``halfway point" between finitely approximated measures and Keisler measures which are finitely satisfiable over a small model. For instance, we show that a Keisler measure is finitely approximated if and only if it is both definable and sequentially approximated (Proposition \ref{Mazur}) and sequentially approximated measures commute with definable measures (Proposition \ref{prop:com}). Sequentially approximated types remain a little more mysterious. We show that there exists a type such that its corresponding Keisler measure is sequentially approximated (even finitely approximated), but the type itself is not sequentially approximated (Proposition \ref{Gabe}).

In the last section, we consider connections to the local measure case and generalize the main result in \cite{GannNIP} (Theorem \ref{main:Gan}). Explicitly, the main result in \cite{GannNIP} demonstrates that if a formula $\varphi$ is NIP and $\mu$ is a $\varphi$-measure which is $\varphi$-definable and finitely satisfiable over a \textit{countable model}, then $\mu$ is $\varphi$-finitely approximated in said model. Here, we demonstrate that \textit{countable} can be replaced by \textit{small}.

This paper is structured as follows: In section 2, we discuss preliminaries. In section 3, we describe sequentially approximated measures and sequentially approximated types. In section 4, we show that if $p$ is generically stable over $M$, then $p$ is sequentially approximated over $M$. We also give some examples of types which are which are not sequentially approximated at the end of the section. In section 5, we show that if $T$ is a countable NIP theory, and $\mu$ is finitely satisfiable over a countable model $M$, then $\mu$ is sequentially approximated over $M$. We then give an equivalent characterization of generically stable measures in NIP theories using smooth sequences. In section 6, we generalize the main theorem in \cite{GannNIP}.

\subsection*{Acknowledgements} 
We would like to thank Gabriel Conant, James Hanson, Karim Khanaki, Pierre Simon and our Ph.D. defense committee Daniel Hoffmann, Anand Pillay, Sergei Starchenko, and Minh Chieu Tran for helpful discussions and comments. Thanks also to the referee for many helpful comments. This paper was also partially supported by the NSF research grant DMS-1800806 as well as the NSF CAREER grant DMS-1651321.

\section{Preliminaries} 
If $r$ and $s$ are real numbers and $\epsilon$ is a real number greater than $0$, then we write $r \approx_{\epsilon} s$ to mean $|r - s| < \epsilon$.

Fix $\mathcal{L}$ a countable language. Throughout this paper, we always have a countable, complete, first-order theory $T$ and a monster model $\mathcal{U}$ of $T$ in the background. The letters $M$ and $N$ will be used to denote small elementary submodels of $\mathcal{U}$. The letters $x,y,z$ will denote tuples of variables. If $A \subseteq \mathcal{U}$, we let $\mathcal{L}(A)$ be the collection of formulas with parameters from $A$ (modulo logical equivalence). A formula in $\mathcal{L}(A)$ is called an ``$\mathcal{L}(A)$-formula". If $x_0,...,x_k$ is a finite sequence of pairwise disjoint tuples of variables, we let $\mathcal{L}_{x_0,...,x_k}(A)$ be the collection of $\mathcal{L}(A)$-formulas with free variables in these tuples. We write $\mathcal{L}_{x_{0},...,x_{k}}(\emptyset)$ simply as $\mathcal{L}_{x_{0},...,x_{k}}$. If $(x_i)_{i \in \omega}$ is a countable sequence of pairwise distinct tuples of variables, we let $\mathcal{L}_{(x_i)_{i \in \omega}}(A) = \bigcup_{k \in \omega} \mathcal{L}_{x_0,...,x_k}(A)$. For a tuple $x$, let $A^{x}= \{(a_0,...,a_{|x|-1}): a_i \in A, i \leq |x|-1\}$. We let $(A^{x})^{<\omega}$ be the collection of all finite sequences of points in $A^{x}$. If we call $\varphi(x,y)$ a \textit{partitioned $\mathcal{L}_{x,y}(\mathcal{U})$-formula}, we treat $x$ as object variables and $y$ as parameter variables. The formula $\varphi^{*}(y,x)$ denotes the exact same formula as $\varphi(x,y)$, but with the roles exchanged for parameters and object tuples. Generally speaking, in any instance where we have multiple tuples of variables (e.g. $x$ and $y$, or $(x_1,x_2,x_3,...)$), we will always assume they are pairwise distinct without comment.

\textbf{Unlike similar papers about Keisler measures, we do not identify a type and its corresponding Keisler measure}. We let $S_{x}(A)$ denote the usual type space over $A$ and $\mathfrak{M}_{x}(A)$ the space of Keisler measures over $A$. We let $\mathfrak{M}_{(x_i)_{i \in \omega}}(\mathcal{U})$ be the collection of finitely additive probability measures on $\mathcal{L}_{(x_{i})_{i \in \omega}}(\mathcal{U})$. For any (tuple of) variable(s) $x$, and any subset $A \subseteq \mathcal{U}$, we have a map $\delta: S_{x}(A) \to \mathfrak{M}_{x}(A)$ via $\delta(p) = \delta_{p}$ where $\delta_{p}$ is the \textit{Dirac measure at the type $p$}. We sometimes refer to $\delta_{p}$ as the \textit{corresponding Keisler measure} of $p$. If $\overline{a} = (a_1,...,a_n)$ is a sequence of points in $\mathcal{U}^{x}$, then we let $\Av(\overline{a})$ be the associated average measure in $\mathfrak{M}_{x}(\mathcal{U})$. Explicitly, for any $\psi(x) \in \mathcal{L}_{x}(\mathcal{U})$, we define
 \begin{equation*}\Av(\overline{a})(\psi(x)) = \frac{|\{1\leq i \leq n: \mathcal{U} \models \psi(a_i)\}|}{n}.
 \end{equation*} 

\subsection{Basics of convergence} Recall that if $A \subseteq \mathcal{U}$, then both $S_{x}(A)$ and $\mathfrak{M}_{x}(A)$ carry a natural compact Hausdorff topology. For $S_{x}(A)$, we have the usual Stone space topology. Similarly, $\mathfrak{M}_{x}(A)$ admits a compact Hausdorff topology. There are two ways to describe this topology. First, this topology is the topology induced from the compact Hausdorff space $[0,1]^{\mathcal{L}_{x}(A)}$ where we identify each measure with the obvious map from $\mathcal{L}_{x}(A)$ to $[0,1]$. This topology on $\mathfrak{M}_{x}(A)$ can also be described as the coarsest topology such that for any continuous function $f: S_{x}(A) \to \mathbb{R}$, the map $\int f : \mathfrak{M}_{x}(A) \to \mathbb{R}$ is continuous.

We will routinely need to keep track of which sets of parameters our types and measures are converging over. Hence, we establish the following conventions. 
\begin{definition} Fix $A \subseteq \mathcal{U}$, $p \in S_{x}(A)$ and $\mu \in \mathfrak{M}_{x}(A)$. 
\begin{enumerate}[$(i)$]
\item We say that a sequence of types $(p_{i})_{i \in \omega}$, where each $p_i$ is in $S_{x}(A)$, \textbf{converges} to $p$ if it converges in the Stone space topology on $S_{x}(A)$, which we write as ``$\lim_{i \to \infty} p_i = p$ in $S_{x}(A)$" or simply as ``$\lim_{i \to \infty} p_i = p$" when the underlying space is obvious. We recall that $\lim_{i \to \infty} p_i = p$ if for every $\psi(x) \in p$, there exists some natural number $N_{\psi}$ such that for any $n > N_{\psi}$, $\psi(x) \in p_n$.
\item We say that a sequence of measures $(\mu_i)_{i \in \omega}$, where each $\mu_{i}$ is in $\mathfrak{M}_{x}(A)$, \textbf{converges} to $\mu$ if this sequence converges in the compact Hausdorff topology on $\mathfrak{M}_{x}(A)$, which we write as ``$\lim_{i \to \infty} \mu_{i} = \mu$ in $\mathfrak{M}_{x}(A)$" or simply as ``$\lim_{i \to \infty} \mu_i = \mu$" when there is no possibility of confusion. Notice that $\lim_{i \to \infty} \mu_i = \mu$ if for every $\psi(x) \in \mathcal{L}_{x}(A)$ and $\epsilon >0$, there exists some natural number $N_{\psi,\epsilon}$ such that for any $n > N_{\psi,\epsilon}$,
\begin{equation*}
|\mu_{n}(\psi(x)) - \mu(\psi(x))| < \epsilon.
\end{equation*}
\end{enumerate}
\end{definition} 
We now observe the relationship between finitely satisfiable types and measures and topological closure in their respective spaces.

\begin{fact}\label{Avcls} Suppose $p \in S_{x}(\mathcal{U})$, $\mu \in \mathfrak{M}_{x}(\mathcal{U})$ and $M \prec \mathcal{U}$. Assume that $p$ and $\mu$ are finitely satisfiable over $M$. Then the following are true.
\begin{enumerate}[($i$)]
\item The type $p$ is in the closure of $\{tp(a/\mathcal{U}): a \in M^{x}\}$ in $S_{x}(\mathcal{U})$. 
\item The associated Keisler measure $\delta_{p}$ is in the closure of $\{\delta_{a}: a \in M^{x}\}$ in $\mathfrak{M}_{x}(\mathcal{U})$. 
\item The measure $\mu$ is in the closure of 
\begin{equation*}
\Big\{\sum_{i=1}^{n} r_i \delta_{a_i}: n \in \mathbb{N}, r_i > 0, \sum_{i=1}^{n} r_i =1, a_i \in M^{x}\Big\}
\end{equation*}
 in $\mathfrak{M}_{x}(\mathcal{U})$.
\item The measure $\mu$ is in the closure of $\{\Av(\overline{a}): \overline{a} \in (M^{x})^{<\omega}\}$ in $\mathfrak{M}_{x}(\mathcal{U})$. 
\end{enumerate} 
\end{fact}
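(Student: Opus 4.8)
The plan is to prove the four parts in order, using each to bootstrap the next, since the statements ascend from types to Dirac measures to general measures. For $(i)$, I would simply unwind definitions: a basic open neighborhood of $p$ in the Stone space is $[\psi] = \{q \in S_x(\mathcal{U}) : \psi(x) \in q\}$ for some $\psi(x) \in p$ (types are closed under conjunction, so a single formula cuts out a basic neighborhood). Finite satisfiability of $p$ over $M$ gives $a \in M^x$ with $\mathcal{U} \models \psi(a)$, so $\tp(a/\mathcal{U}) \in [\psi]$; hence every basic neighborhood of $p$ meets $\{\tp(a/\mathcal{U}) : a \in M^x\}$, which is $(i)$. For $(ii)$, I would invoke continuity of the map $\delta \colon S_x(\mathcal{U}) \to \mathfrak{M}_x(\mathcal{U})$, $q \mapsto \delta_q$: a continuous map sends the closure of a set into the closure of its image, so applying $\delta$ to $(i)$ yields $\delta_p \in \overline{\{\delta_{\tp(a/\mathcal{U})} : a \in M^x\}} = \overline{\{\delta_a : a \in M^x\}}$.

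For $(iii)$, the key step is to manufacture a finitely-supported measure matching $\mu$ on a prescribed finite set of formulas. A basic neighborhood of $\mu$ is determined by formulas $\psi_1, \dots, \psi_n \in \mathcal{L}_x(\mathcal{U})$ and $\epsilon > 0$. Let $\theta_1, \dots, \theta_m$ enumerate the consistent atoms of the finite Boolean algebra generated by $\psi_1, \dots, \psi_n$, so each $\theta_k$ is a conjunction of the $\psi_i$ and their negations, the $\theta_k$ are pairwise inconsistent, and each $\psi_i$ is equivalent to a disjunction of certain $\theta_k$. Put $s_k = \mu(\theta_k)$, so that $\sum_k s_k = 1$ and $\mu(\psi_i) = \sum_{\theta_k \vdash \psi_i} s_k$ for each $i$. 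For every $k$ with $s_k > 0$, finite satisfiability of $\mu$ over $M$ supplies $a_k \in M^x$ with $\mathcal{U} \models \theta_k(a_k)$, and then $\delta_{a_k}(\theta_l) = 1$ exactly when $l = k$. The measure $\nu := \sum_{k : s_k > 0} s_k\, \delta_{a_k}$ is a convex combination of the required form, and $\nu(\psi_i) = \sum_{\theta_k \vdash \psi_i} s_k = \mu(\psi_i)$ for all $i$, so $\nu$ lies in the chosen neighborhood; this gives $(iii)$.

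Finally, for $(iv)$ I would upgrade the convex combination from $(iii)$ to an average measure by rational approximation. Keeping $\psi_1, \dots, \psi_n, \epsilon$ and the measure $\nu = \sum_j r_j \delta_{a_j}$ from $(iii)$, choose a common denominator $N$ and nonnegative integers $q_j$ with $\sum_j q_j = N$ and $\sum_j |q_j/N - r_j| < \epsilon$ (possible since the rationals are dense in the simplex). Let $\overline{a} \in (M^x)^{<\omega}$ list each $a_j$ exactly $q_j$ times, so that $\Av(\overline{a}) = \sum_j (q_j/N)\,\delta_{a_j}$. For each $i$ we then get $|\Av(\overline{a})(\psi_i) - \mu(\psi_i)| = |\Av(\overline{a})(\psi_i) - \nu(\psi_i)| \leq \sum_j |q_j/N - r_j| < \epsilon$, so $\Av(\overline{a})$ lies in the neighborhood, establishing $(iv)$. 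The whole argument is elementary; the step I would flag as the crux is the passage to the finite Boolean algebra of atoms in $(iii)$, since it is precisely what allows finite satisfiability — a statement about single positive-measure formulas — to be leveraged into simultaneous agreement with $\mu$ on all the test formulas at once, with parts $(ii)$ and $(iv)$ then following formally from continuity of $\delta$ and density of the rationals, respectively.
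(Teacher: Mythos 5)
Your proof is correct and follows exactly the route the paper sketches: $(i)$ is the standard exercise of hitting each basic clopen neighborhood $[\psi]$ with a realization from $M^x$, $(ii)$ follows from $(i)$ by continuity of $\delta$, $(iii)$ is the atoms-of-the-finite-Boolean-algebra argument that the paper delegates to \cite[Proposition 2.11]{ChGan}, and $(iv)$ is the rational-approximation upgrade of $(iii)$. Nothing to flag; the crux you identify (passing to atoms so that finite satisfiability yields simultaneous exact agreement on all test formulas) is indeed the heart of the cited proof.
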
 

We remark that the proof of $(i)$ is a standard exercise and the proof of $(ii)$ follows directly from $(i)$. A proof of $(iii)$ can be found at \cite[Proposition 2.11]{ChGan} and $(iv)$ follows directly from $(iii)$. 

\subsection{Types} 

We recall some basic definitions and facts about special kinds of types (e.g. generically stable types). Our notion of an \textit{EM-type} is not defined in complete generality since we are only concerned with countable sequences in this paper.

\begin{definition} Let $(a_i)_{i \in \omega}$ be a sequence of points in $\mathcal{U}^{x}$ and let $B \subseteq \mathcal{U}$. Then the \textbf{Ehrenfeucht-Mostowski type} or \textbf{EM-type} of the sequence $(a_{i})_{i \in \omega}$ over $B$, denoted $\EM((a_{i})_{i \in \omega }/B)$, is the following partial type: 
\begin{equation*}
 \{\varphi(x_0,...,x_k) \in \mathcal{L}_{(x_i)_{i \in \omega}}(B): \mathcal{U} \models \varphi(a_{i_{0}},...,a_{i_{k}}) \text{ for any } i_0 <...<i_{k} \}.
\end{equation*} 
We remark that this partial type corresponds to a subset of $S_{(x_{i})_{i \in \omega}}(B)$.
\end{definition} 

\begin{observation} It is clear from the definition above that for any sequence of points $(a_{i})_{i \in \omega}$ in $\mathcal{U}^{x}$ and any 
$B \subseteq \mathcal{U}$, the type $\EM((a_{i})_{i \in \omega }/B)$ is complete if and only if the sequence $(a_{i})_{i \in \omega}$ is indiscernible over $B$.
\end{observation}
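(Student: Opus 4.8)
The plan is to unwind the two definitions and observe that both hinge on the same condition: for each formula $\varphi(x_0,\dots,x_k) \in \mathcal{L}_{x_0,\dots,x_k}(B)$, whether the truth value of $\varphi(a_{i_0},\dots,a_{i_k})$ is constant as $(i_0,\dots,i_k)$ ranges over strictly increasing tuples of indices. Since $\mathcal{L}_{(x_i)_{i\in\omega}}(B) = \bigcup_{k\in\omega} \mathcal{L}_{x_0,\dots,x_k}(B)$, an element of $S_{(x_i)_{i\in\omega}}(B)$ is a consistent collection of formulas deciding every such $\varphi$, so completeness of the partial type $\EM((a_i)_{i\in\omega}/B)$ amounts exactly to saying that it contains $\varphi$ or $\neg\varphi$ for every $\varphi(x_0,\dots,x_k)$.

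For the forward direction I would assume $(a_i)_{i\in\omega}$ is indiscernible over $B$ and fix $\varphi(x_0,\dots,x_k)$. By indiscernibility the truth value of $\varphi(a_{i_0},\dots,a_{i_k})$ does not depend on the choice of increasing indices; if it is always true then $\varphi \in \EM((a_i)_{i\in\omega}/B)$ by definition, and if it is always false then the same definition places $\neg\varphi$ in the EM-type. Either way the partial type decides $\varphi$, so it is complete. Consistency is automatic, since any finite subset of the EM-type is witnessed simultaneously by an initial segment $a_0,\dots,a_k$ of the sequence.

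For the converse I would assume the EM-type is complete and fix $\varphi(x_0,\dots,x_k)$ together with two increasing tuples $i_0<\dots<i_k$ and $j_0<\dots<j_k$. Completeness forces either $\varphi$ or $\neg\varphi$ into $\EM((a_i)_{i\in\omega}/B)$; in the first case the definition gives $\mathcal{U}\models\varphi(a_{i_0},\dots,a_{i_k})$ and $\mathcal{U}\models\varphi(a_{j_0},\dots,a_{j_k})$, while in the second case it gives $\mathcal{U}\models\neg\varphi$ at both tuples. Hence the two tuples satisfy exactly the same $\mathcal{L}(B)$-formulas, which is precisely indiscernibility over $B$.

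I do not anticipate a genuine obstacle here; the only point requiring care is the bookkeeping between the ``initial segment'' variables $x_0,\dots,x_k$ used to present the EM-type and the arbitrary increasing index tuples appearing in its defining condition, together with the convention that order-indiscernibility quantifies only over increasing tuples. Once these are aligned, the equivalence is a direct translation of the definitions.
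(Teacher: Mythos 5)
Your proof is correct and matches the paper's intent: the paper states this observation without proof as being ``clear from the definition,'' and your argument is exactly the direct definitional unwinding (constant truth value on increasing index tuples is precisely what both completeness of the EM-type and $B$-indiscernibility assert). The bookkeeping point you flag about aligning $\mathcal{L}_{(x_i)_{i\in\omega}}(B)=\bigcup_k\mathcal{L}_{x_0,\dots,x_k}(B)$ with arbitrary increasing tuples is the only subtlety, and you handle it correctly.
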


The general notion of a \textit{generically stable type} was introduced by Pillay and Tanovi\'{c} in \cite{PiTa}. The definition of a generically stable type provided below was proved to be equivalent in \cite{CoGan} (see Proposition 3.2). We also provide the definition of a $\dfs$ type which will be important throughout this paper. In general, the class of $\dfs$ types strictly contains the class of generically stable types. 

\begin{definition} Suppose that $p \in S_{x}(\mathcal{U})$. 
\begin{enumerate}[$(i)$]
\item We say that $p$ is \textbf{dfs} if there exists a small model $M \prec \mathcal{U}$ such that $p$ is both definable and finitely satisfiable over $M$. In this case, we say that $p$ is \textbf{dfs over $M$}. 
\item We say that $p$ is \textbf{generically stable} if there exists a small model $M \prec \mathcal{U}$ such that $p$ is invariant over $M$ and for any Morley sequence $(a_i)_{i \in \omega}$ in $p$ over $M$, we have that $\lim_{i \to \infty} \tp(a_i/\mathcal{U}) = p$. In this case, we say that $p$ is \textbf{generically stable over $M$}. 
\end{enumerate}
\end{definition} 
Finally, we provide a collection of standard facts about these classes of types.

\begin{fact}\label{gfs:facts} Let $p$ be in $S_{x}(\mathcal{U})$ and $M \prec \mathcal{U}$.
\begin{enumerate}[$(i)$]
\item If $p$ is generically stable over $M$, then $p$ is $\dfs$ over $M$ $($\cite[Proposition 1]{PiTa}$)$.
\item If $p$ is $\dfs$ over $M$, then any Morley sequence in $p$ over $M$ is totally indiscernible over $M$ $($\cite[Proposition 3.2]{HP}, proof does not use NIP$)$.
\item If $p$ is generically stable/$\dfs$ over $M$ and $M_0$-invariant, then $p$ is respectively generically stable/$\dfs$ over $M_0$ $($generically stable case follows from $(i)$ of \cite[Proposition 1]{PiTa}; $\dfs$ case can be found in \cite[Lemma 2.8]{Sibook}$)$.
\item $($T is countable$)$ If $p$ is generically stable/$\dfs$ over $M$, there exists an elementary submodel $M_0$ such that $|M_0| = \aleph_0$ and $p$ is generically stable/$\dfs$ over $M_0$ $($Easy to check from $(iii)$$)$.
\item $($T is NIP$)$ If $p$ is $\dfs$ over $M$ then $p$ is generically stable over $M$ $($e.g. \cite[Theorem 2.29]{Sibook}$)$.
\end{enumerate}
\end{fact}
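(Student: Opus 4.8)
The plan is to treat this as a bookkeeping lemma rather than a single argument: items $(i)$, $(ii)$, $(iii)$, and $(v)$ are foundational results about invariant types that I would import essentially verbatim from the indicated sources, while $(iv)$ is the only part that genuinely needs to be checked in place, and it will be deduced from $(iii)$ by a Löwenheim--Skolem argument. So the real content of a proof is organizing the citations correctly and then verifying that the countable-model reduction goes through; there is no serious obstacle internal to this paper, since the difficult work all lives in the references.

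For $(i)$ I would invoke the Pillay--Tanovi\'{c} analysis in \cite{PiTa}: a generically stable type is in particular $M$-invariant, and the convergence of every Morley sequence to $p$ forces $p$ to be both definable and finitely satisfiable over $M$, which is exactly $\dfs$ over $M$. For $(ii)$, the point (from \cite{HP}) is that a $\dfs$ type commutes with itself in the Morley product, and self-commutation is equivalent to total indiscernibility of Morley sequences; crucially this symmetry uses only definability together with finite satisfiability, not $\NIP$. Item $(iii)$ I would take from \cite{Sibook} in the $\dfs$ case and from \cite{PiTa} in the generically stable case: once $p$ is known to be $M_0$-invariant, the witnessing $\varphi$-definitions and the finitely-satisfiable witnesses can be relocated to $M_0$. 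Finally $(v)$ is the standard $\NIP$ coincidence theorem \cite[Theorem 2.29]{Sibook}: under $\NIP$, definability plus finite satisfiability already yields the convergence condition defining generic stability.

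The one step I would actually carry out is $(iv)$, reducing it to $(iii)$. In both cases $p$ is definable over $M$: for $\dfs$ this is immediate, and for generically stable it follows from $(i)$. Since $T$ is countable there are only countably many partitioned formulas $\varphi(x,y)$, and for each the $\varphi$-definition $d_p\varphi(y)$ lies in $\mathcal{L}_y(M)$ and hence uses only finitely many parameters from $M$. Collecting all of these parameters and closing off under an elementary substructure, I obtain a countable $M_0 \prec \mathcal{U}$ over which $p$ is definable. A type definable over $M_0$ is $M_0$-invariant, since $\tp(\bbar/M_0) = \tp(\bbar'/M_0)$ gives $\models d_p\varphi(\bbar) \leftrightarrow d_p\varphi(\bbar')$ and hence $\varphi(x,\bbar) \in p \leftrightarrow \varphi(x,\bbar') \in p$. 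Now $p$ is generically stable/$\dfs$ over $M$ and $M_0$-invariant, so $(iii)$ upgrades this to generic stability/$\dfs$ over the countable model $M_0$. The only mild subtlety worth flagging is that this route manufactures the countable invariance base by passing through definability, so it matters that $(i)$ is established before the generically stable half of $(iv)$ is treated; beyond that the verification is routine.
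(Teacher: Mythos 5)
Your proposal is correct and matches the paper's treatment: the paper likewise presents $(i)$, $(ii)$, $(iii)$, and $(v)$ as cited imports and disposes of $(iv)$ with the remark that it is ``easy to check from $(iii)$''. Your worked-out reduction for $(iv)$ --- obtaining a countable invariance base via definability (using $(i)$ in the generically stable case) and then applying $(iii)$ --- is exactly the intended argument.
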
 

\subsection{Keisler measures} In this subsection, we will briefly recall some important definitions and facts about these measures. As with any paper about Keisler measures, we provide the following \textit{standard atlas}. 
\begin{definition} Let $\mu \in \mathfrak{M}_{x}(\mathcal{U})$.
\begin{enumerate}[($i$)]
\item $\mu$ is \textbf{invariant} if there exists a model $M \prec \mathcal{U}$ such that for every partitioned $\mathcal{L}$-formula $\varphi(x,y)$ and $b,b' \in \mathcal{U}^{y}$ such that $b \equiv_{M} b'$, $\mu(\varphi(x,b)) = \mu(\varphi(x,b'))$. In this case, we say that $\mu$ is \textbf{$M$-invariant} or \textbf{invariant over $M$}.
\item If $\mu$ is invariant over $M$, then for every partitioned $\mathcal{L}(M)$-formula $\varphi(x,y)$, we can define the map $F_{\mu,M}^{\varphi}:S_{y}(M) \to [0,1]$ via $F_{\mu,M}^{\varphi}(q) = \mu(\varphi(x,b))$ where $b \models q$. When $M$ is obvious we will simply write $F_{\mu,M}^{\varphi}$ as $F_{\mu}^{\varphi}$. 
\item $\mu$ is \textbf{Borel-definable} if there exists a model $M \prec \mathcal{U}$ such that $\mu$ is $M$-invariant and for every partitioned $\mathcal{L}$-formula $\varphi(x,y)$, the map $F_{\mu,M}^{\varphi}$ is Borel. In this case, we say that $\mu$ is \textbf{Borel-definable over $M$}.
\item $\mu$ is \textbf{definable} if there exists a model $M \prec \mathcal{U}$ such that $\mu$ is $M$-invariant and for every partitioned $\mathcal{L}$-formula $\varphi(x,y)$, the map $F_{\mu,M}^{\varphi}$ is continuous. In this case, we say that $\mu$ is \textbf{$M$-definable} or \textbf{definable over $M$}.
\item $\mu$ is \textbf{finitely satisfiable over a small model} if there exists $M \prec \mathcal{U}$ such that for every formula $\varphi(x) \in \mathcal{L}_{x}(\mathcal{U})$, if $\mu(\varphi(x)) > 0$ then there exists $a \in M^{x}$ such that $\mathcal{U} \models \varphi(a)$. In this case, we say that $\mu$ is \textbf{finitely satisfiable over $M$}.
\item $\mu$ is \textbf{finitely approximated} if there exists a model $M \prec \mathcal{U}$ such that for every partitioned $\mathcal{L}$-formula $\varphi(x,y)$ and every $\epsilon > 0$, there exists $\overline{a} \in (M^{x})^{<\omega}$ such that
\begin{equation*}
\sup_{b \in \mathcal{U}^{y}} |\mu(\varphi(x,b)) - \Av(\overline{a})(\varphi(x,b))| < \epsilon. 
\end{equation*}
In this case, we say that $\mu$ is \textbf{finitely approximated over $M$}. 
\item $\mu$ is \textbf{smooth} if there exists a model $M \prec \mathcal{U}$ such that for any $\lambda \in \mathfrak{M}_{x}(\mathcal{U})$ if $\lambda|_{M} = \mu|_{M}$, then $\lambda = \mu$. If this is the case, we say that $\mu$ is \textbf{smooth over $M$}.
\end{enumerate}
\end{definition} 

We now provide a collection of basic facts. Statements $(i)$, $(iii)$, $(iv)$, and $(v)$ in Fact \ref{KM:imp} are relatively straightforward to prove and so we leave them as exercises.

\begin{fact}\label{KM:imp} Assume that $T$ is any theory and $\mu \in \mathfrak{M}_{x}(\mathcal{U})$ with $M \prec \mathcal{U}$. 
\begin{enumerate}[$(i)$]
\item If $\mu = \Av(\overline{a})$ for some $\overline{a} \in (M^{x})^{<\omega}$, then $\mu$ is smooth over $M$.
\item If $\mu$ is smooth over $M$, then $\mu$ is finitely approximated over $M$, $($e.g. \cite[Proposition 7.10]{Sibook}$)$.
\item If $\mu$ is finitely approximated over $M$, then $\mu$ is both definable and finitely satisfiable over $M$.
\item If $\mu$ is definable or finitely satisfiable over $M$, then $\mu$ is $M$-invariant. 
\item The measure $\mu$ is definable over $M$ if and only if for every partitioned $\mathcal{L}(M)$-formula $\varphi(x,y)$ and for every $\epsilon > 0$, there exists formulas $\psi_{1}(y),...,\psi_{n}(y) \in \mathcal{L}_{y}(M)$ and real numbers $r_1,...,r_n \in [0,1]$ such that
\begin{equation*}
\sup_{q \in S_{y}(M)} | F_{\mu,M}^{\varphi}(q) - \sum_{i=1}^{n} r_i \mathbf{1}_{\psi_i(y)}(q)| < \epsilon. 
\end{equation*} 
where $\mathbf{1}_{\psi_{i}(y)}$ is the characteristic function of the clopen set $[\psi_{i}(y)]$. 
\end{enumerate}
Moreover, if $T$ is NIP then the following also hold. 
\begin{enumerate}[$(vi)$]
\item If $\mu$ is invariant over $M$, then $\mu$ is Borel-definable $M$ $($e.g. \cite[Proposition 7.19]{Sibook}$)$. 
\item Any measure $\mu$ is definable and finitely satisfiable over $M$ if and only if $\mu$ is finitely approximated over $M$ $($\cite[Proposition 3.2]{HPS}$)$.
\item Every measure has a ``smooth extension". In particular, for any given $M \prec \mathcal{U}$ and $\mu \in \mathfrak{M}_{x}(\mathcal{U})$, there exists some $N$ such that $M \prec N \prec \mathcal{U}$ and a measure $\lambda \in \mathfrak{M}_{x}(\mathcal{U})$ such that $\lambda$ is smooth over $N$ and $\lambda|_{M} = \mu|_{M}$ $($\cite[Lemma 2.2]{HPS}$)$. 
\end{enumerate}
\end{fact}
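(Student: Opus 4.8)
Items $(ii)$, $(vi)$, $(vii)$, and $(viii)$ come directly from the cited sources, so the plan is to verify the four items left as exercises: $(i)$, $(iii)$, $(iv)$, and $(v)$. Three of these rest on one elementary observation: if $a \in M^{x}$ and $b \equiv_{M} b'$, then $\mathcal{U} \models \varphi(a,b)$ if and only if $\mathcal{U} \models \varphi(a,b')$ --- that is, for a parameter point drawn from $M$ the truth of $\varphi(a,y)$ depends only on $\tp(y/M)$. I would prove them in the order $(iv)$, $(iii)$, $(i)$, $(v)$, since the later arguments reuse ideas from the earlier ones.

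For $(iv)$ the definable case is immediate, since $M$-invariance is built into the definition of definability; for the finitely satisfiable case, fix a partitioned $\varphi(x,y)$ and $b \equiv_{M} b'$ and argue that $\mu(\varphi(x,b) \wedge \neg\varphi(x,b')) = 0$, as otherwise finite satisfiability yields $a \in M^{x}$ with $\mathcal{U} \models \varphi(a,b) \wedge \neg\varphi(a,b')$, contradicting the observation; the symmetric difference is handled identically, and finite additivity gives $\mu(\varphi(x,b)) = \mu(\varphi(x,b'))$. For $(iii)$, finite satisfiability is direct: if $\mu(\varphi(x,b)) > 0$, pick $\epsilon < \mu(\varphi(x,b))$ and an approximating $\overline{a}$, so that $\Av(\overline{a})(\varphi(x,b)) > 0$ and hence some entry of $\overline{a}$ realizes $\varphi(x,b)$. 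For definability, $M$-invariance follows from the approximation just as in $(iv)$, and then the point is that for $\overline{a} \in (M^{x})^{<\omega}$ the function $q \mapsto \Av(\overline{a})(\varphi(x,b))$ (for $b \models q$) is a finite average of characteristic functions of clopen subsets of $S_{y}(M)$, hence continuous; the finite-approximation inequality says precisely that $F_{\mu,M}^{\varphi}$ is a uniform limit of such functions, so it is continuous and $\mu$ is definable over $M$.

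For $(i)$, write $\Av(\overline{a}) = \sum_{j=1}^{m} \tfrac{k_j}{n}\,\delta_{c_j}$, where $c_1,\dots,c_m \in M^{x}$ are the distinct entries of $\overline{a}$ with multiplicities $k_j$. The idea is that this measure is concentrated on finitely many points of $M$ and that its restriction to $M$ already records this: if $\lambda|_{M} = \Av(\overline{a})|_{M}$, then applying $\lambda$ to the $\mathcal{L}_{x}(M)$-formulas $x = c_j$ gives $\lambda(x = c_j) = k_j/n$, so $\lambda$ places all of its mass on the $c_j$. For any $\varphi(x,b)$ and each $j$, finite additivity then forces $\lambda(\varphi(x,b) \wedge x = c_j)$ to equal $k_j/n$ when $\mathcal{U} \models \varphi(c_j,b)$ and $0$ otherwise, whence $\lambda(\varphi(x,b)) = \Av(\overline{a})(\varphi(x,b))$ and thus $\lambda = \Av(\overline{a})$.

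Finally, $(v)$ is the Stone-space fact that a function on $S_{y}(M)$ is continuous if and only if it is a uniform limit of finite $[0,1]$-combinations of characteristic functions of clopen sets: the reverse direction is immediate, since each $\mathbf{1}_{\psi_i(y)}$ is continuous and uniform limits of continuous functions are continuous, while the forward direction uses compactness and total disconnectedness of $S_{y}(M)$ to refine a cover of $[0,1]$ by length-$\epsilon$ intervals into a clopen partition on which $F_{\mu,M}^{\varphi}$ varies by less than $\epsilon$. The only genuinely nontrivial step is the support argument in $(i)$ --- recognizing that agreement of $\lambda$ with $\Av(\overline{a})$ over $\mathcal{L}_{x}(M)$ alone forces concentration on the $c_j$ through the equality formulas --- together with the minor care needed in $(v)$ that $F_{\mu,M}^{\varphi}$ be well-defined, so that the stated characterization is implicitly read among $M$-invariant measures.
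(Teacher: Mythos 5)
Your proofs of $(i)$, $(iii)$, $(iv)$, and $(v)$ are correct and are exactly the standard arguments intended; the paper itself leaves these items as exercises and cites the literature for the rest, so there is no authorial proof to diverge from. The two points you single out --- the support argument via the equality formulas $x = c_j$ in $(i)$, and reading $(v)$ among $M$-invariant measures so that $F_{\mu,M}^{\varphi}$ is well-defined --- are indeed the only places requiring any care.
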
 

\begin{proposition}[T is countable]\label{m:countable} If $\mu$ is definable, finitely approximated, smooth or $\dfs$, then there exists a countable model $M_0$ such that $\mu$ is definable, finitely approximated, smooth or $\dfs$ over $M_0$ $($respectively$)$. 
\end{proposition}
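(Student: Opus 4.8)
The overall approach is a Löwenheim--Skolem argument driven by the observation that each of the four properties has a \emph{local} characterization: for every parameter-free partitioned formula $\varphi(x,y)$ and every rational $\epsilon > 0$, there is a finite packet of witnessing data (tuples from $M^{x}$, or $\mathcal{L}(M)$-formulas together with rationals) whose defining conditions split into sentences about the parameters involved and constraints on the values of $\mu$. Since $\mathcal{L}$ is countable there are only countably many pairs $(\varphi,\epsilon)$, so I would fix one witnessing packet for each, let $A \subseteq M$ be the countable set of all parameters occurring in these packets, and use Löwenheim--Skolem to pick a countable $M_{0}$ with $A \subseteq M_{0} \prec M$ (so automatically $M_{0} \prec \mathcal{U}$). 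The work is then to verify, property by property, that these witnesses still function over $M_{0}$. For \textbf{finitely approximated} this is immediate: the definition hands us, for each $(\varphi,\epsilon)$, a tuple $\bar a \in (M^{x})^{<\omega}$ with $\sup_{b} |\mu(\varphi(x,b)) - \Av(\bar a)(\varphi(x,b))| < \epsilon$, and once the coordinates of $\bar a$ lie in $M_{0}$ the same inequality (whose supremum ranges over all of $\mathcal{U}^{y}$, hence is untouched) witnesses finite approximation over $M_{0}$.

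For \textbf{definable} I would use the reformulation in Fact~\ref{KM:imp}$(v)$: collecting the approximating formulas $\psi_{i}(y) \in \mathcal{L}_{y}(M_{0})$ forces $F_{\mu,M}^{\varphi}$ to be a uniform limit of functions that are constant on the fibers of the restriction map $\pi\colon S_{y}(M) \to S_{y}(M_{0})$, whence $F_{\mu,M}^{\varphi}$ is itself constant on those fibers. This gives $M_{0}$-invariance, and since $\pi$ is a quotient map of compact Hausdorff spaces the induced function $F_{\mu,M_{0}}^{\varphi}$ is continuous; reducing $\mathcal{L}(M_{0})$-formulas to parameter-free ones by absorbing parameters into $y$ then yields definability over $M_{0}$. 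For \textbf{smooth} the naive idea of just restricting is wrong---smoothness over a smaller model is a priori \emph{stronger}---so the witness collection is genuinely needed. Here I would invoke the standard ``sandwiching'' characterization underlying Fact~\ref{KM:imp}$(ii)$: $\mu$ is smooth over $M$ iff for all $\varphi(x,y)$ and $\epsilon>0$ there are $\mathcal{L}(M)$-formulas $\theta_{i}^{-}(x),\theta_{i}^{+}(x),\rho_{i}(y)$ with $\{\rho_{i}\}$ covering $S_{y}(M)$ such that, for every $b \models \rho_{i}$, $\theta_{i}^{-}(x) \vdash \varphi(x,b) \vdash \theta_{i}^{+}(x)$ and $\mu(\theta_{i}^{+}(x)) - \mu(\theta_{i}^{-}(x)) < \epsilon$. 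The covering and the two entailments are sentences and transfer verbatim to $M_{0}$ once the parameters are in $M_{0}$, while the remaining conditions mention only values of $\mu$; the usual squeezing argument then shows any $\lambda$ agreeing with $\mu$ on $\mathcal{L}_{x}(M_{0})$ agrees everywhere, i.e. $\mu$ is smooth over $M_{0}$.

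The \textbf{dfs} case is where I expect the real work, since it demands finite satisfiability over a \emph{countable} model, and finite satisfiability---unlike the other three properties---is a condition quantifying over all of $\mathcal{U}^{y}$ with no evident finite witness per formula. The definable half is handled as above, so $\mu$ is $M_{0}$-invariant and definable over $M_{0}$. For the finitely satisfiable half my plan is to exploit definability to manufacture finite witnesses: for each $\varphi(x,y)$ and each $k \ge 1$ the level set $\{q \in S_{y}(M) : F_{\mu,M}^{\varphi}(q) \ge 1/k\}$ is closed, hence compact, and finite satisfiability over $M$ says precisely that it is covered by the clopen sets $[\varphi(a,y)]$ with $a \in M^{x}$; extracting a finite subcover produces $a_{1},\dots,a_{m} \in M^{x}$ such that every $b$ with $\mu(\varphi(x,b)) \ge 1/k$ satisfies $\bigvee_{j} \varphi(a_{j},b)$. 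Throwing all these (countably many) tuples into $A$, any $b$ with $\mu(\varphi(x,b)) > 0$ has $\mu(\varphi(x,b)) \ge 1/k$ for some $k$, so some $a_{j} \in M_{0}^{x}$ witnesses $\varphi(x,b)$; hence $\mu$ is finitely satisfiable, and therefore $\dfs$, over $M_{0}$. I would flag that this step genuinely uses definability---through continuity, and thereby compactness of the level sets---which is exactly why the argument covers $\dfs$ but not bare finite satisfiability.
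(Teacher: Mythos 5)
Your proof is correct, and for three of the four properties it follows essentially the same route as the paper's (stated there very tersely): definability and smoothness are each captured by countably many $\mathcal{L}(M)$-formulas --- via Fact \ref{KM:imp}$(v)$ and the sandwiching characterization \cite[Lemma 2.3]{HPS}, respectively --- and finite approximability by countably many elements of $M$, so a countable elementary submodel absorbing this data inherits the property; your verifications that the witnesses still function over $M_{0}$ (factoring $F_{\mu,M}^{\varphi}$ through the quotient $S_{y}(M)\to S_{y}(M_{0})$, and the squeezing argument for smoothness) are exactly the details the paper leaves implicit. The genuine divergence is the $\dfs$ case. The paper first makes $\mu$ definable, hence invariant, over a countable $M_{0}$ and then quotes the coheir argument of \cite[Proposition 4.13]{GannNIP} (compare Lemma \ref{meas:lemma}): a measure finitely satisfiable in $M$ and invariant over $M_{0}\prec M$ is automatically finitely satisfiable in $M_{0}$. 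You instead exploit definability directly: continuity of $F_{\mu,M}^{\varphi}$ makes the level sets $\{q\in S_{y}(M): F_{\mu,M}^{\varphi}(q)\geq 1/k\}$ compact, finite satisfiability over $M$ covers them by the clopen sets $[\varphi(a,y)]$ with $a\in M^{x}$, and finite subcovers supply countably many explicit witnesses to throw into $M_{0}$. Both arguments are sound. Yours is self-contained and makes visible exactly which elements of $M$ must survive into $M_{0}$; the paper's is more general in that it needs only invariance over $M_{0}$ rather than continuity, and hence also pushes bare finite satisfiability down to any elementary submodel over which the measure is invariant --- a flexibility your compactness argument lacks, as you correctly flag.
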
 

\begin{proof} We notice that the properties of definability and smoothness only require the existence of $\aleph_0$-many $\mathcal{L}(M)$-formulas (by \cite[Lemma 2.3]{HPS} and (v) of Fact \ref{KM:imp} respectively). If we choose an elementary submodel $M_0$ of $M$ containing the parameters from these formulas, then $\mu$ will have the desired property over $M_0$. Finitely approximated measures only require the existence of $\aleph_0$-many elements of $M$. Choosing an elementary submodel $M_0$ of $M$ with these elements demonstrates that $\mu$ is finitely approximated over $M_0$.

Finally, if $\mu$ is $\dfs$ then $\mu$ is definable over a countable model $M_{0}$. In particular, $\mu$ is invariant over $M_0$ and so $\mu$ is also finitely satisfiable over $M_0$ by the same argument as in \cite[Proposition 4.13]{GannNIP}.
\end{proof} 

\begin{remark} Assuming $T$ is countable, there are measures (even types) which are finitely satisfiable over a small submodel, but are not finitely satisfiable over a countable submodel. See Proposition \ref{omega} and Remark \ref{example:coheir1} for an explicit example. 
\end{remark} 

\begin{definition}Let $\mu \in \mathfrak{M}_{x}(\mathcal{U})$, $\nu \in \mathfrak{M}_{y}(\mathcal{U})$ and assume that $\mu$ is Borel-definable over $M$. Then we define the \textbf{Morley product} of $\mu$ and $\nu$ (denoted $\mu \otimes \nu)$ is the unique Keisler measure in $\mathfrak{M}_{x,y}(\mathcal{U})$ with the following property: for any formula $\varphi(x,y) \in \mathcal{L}_{x,y}(\mathcal{U})$,  
\begin{equation*}
    \mu \otimes \nu (\varphi(x,y)) = \int_{S_{y}(N)} F_{\mu}^{\varphi} d(\nu|_{N}),
\end{equation*}
where $N$ is any small elementary submodel of $\mathcal{U}$ containing $M$ and any parameters from $\varphi$ and $\nu|_{N}$ is the associated regular Borel probability measure of the restriction of $\nu$ to $N$ on the type space $S_{y}(N)$.
\end{definition}

We remark that this this product is well-defined and the computation does not depend on our choice of $N$ (assuming $N$ contains $M$ and all parameters in $\varphi(x,y)$) (see discussion after \cite[Proposition 7.19]{Sibook}). This observation allows us to grow or shrink the space in which we are integrating over and we will make substantial use of this property in section 5. We end this section with a list of facts about measures and products. 

\begin{fact}\label{KM:imp2} Assume that $T$ is any theory and $\mu \in \mathfrak{M}_{x}(\mathcal{U})$, $\nu \in \mathfrak{M}_{y}(\mathcal{U})$, and $\lambda \in \mathfrak{M}_{z}(\mathcal{U})$. Assume that $\mu$ and $\nu$ are both $M$-invariant.
\begin{enumerate}[$(i)$]
\item If $\mu$ is smooth and $\nu$ is Borel definable, then $\mu \otimes \nu = \nu \otimes \mu$ $($see \cite[Corollary 2.5]{HPS}$)$. 
\item If $\mu$ and $\nu$ are definable (over $M$), then $\mu \otimes \nu$ is definable (over $M$) and $\mu \otimes (\nu \otimes \lambda) = (\mu \otimes \nu) \otimes \lambda)$ $($see \cite[Proposition 2.6]{CoGan}$)$.
\item If $\mu$ and $\nu$ are smooth (over $M$), then $\mu \otimes \nu$ is smooth (over $M$) $($e.g. \cite[Corollary 3.1]{CoGaNA}$)$.
\item If $\mu$ is Borel definable (over $M$) and $\nu$ is invariant (over $M$), then $\mu \otimes \nu$ is invariant (over $M$) $($discussion before \cite[Exercise 7.20]{Sibook}$)$.
\item If $\mu$ and $\nu$ are $\dfs$ (over $M$), then $\mu \otimes \nu$ is $\dfs$ (over $M$) $($e.g. \cite[Proposition 2.10]{CoGan}$)$.
\end{enumerate} 
Moreover, if $T$ is NIP then the following also hold. 
\begin{enumerate}[$(a)$]
\item If $\mu,\nu$ are invariant then $\mu \otimes (\nu \otimes \lambda) = (\mu \otimes \nu) \otimes \lambda$ $($see \cite{CoGaNA}$)$.
\item If $\mu$ is $\dfs$ and $\nu$ is invariant, then $\mu \otimes \nu = \nu \otimes \mu$ $($see \cite[Theorem 3.2]{HPS}$)$.
\end{enumerate} 
\end{fact}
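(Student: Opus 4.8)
The plan is to prove each clause by reducing the Morley product to an explicit computation on finite averages of Dirac measures and then transferring to the stated generality through the appropriate mode of approximation; the two NIP clauses are the exception and will require genuinely measure-theoretic and generic-stability input.

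The combinatorial engine is the following identity. For $\bar a=(a_1,\dots,a_n)\in(\mathcal U^{x})^{<\omega}$ and any Borel-definable $\nu$, unwinding the defining integral gives
\[
\Av(\bar a)\otimes\nu(\varphi(x,y))=\tfrac1n\sum_{i=1}^{n}\nu(\varphi(a_i,y))=\nu\otimes\Av(\bar a)(\varphi(x,y)),
\]
so any product in which one factor is a finite average automatically commutes, and this is what will seed every approximation argument below. Clauses $(iv)$ and $(v)$ fit the template most directly. Invariance of $\mu\otimes\nu$ follows from $M$-invariance of $F^{\varphi}_{\mu}$ together with the fact that an $M$-automorphism acts compatibly on the defining integral. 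Finite satisfiability transfers by positivity: if $\mu\otimes\nu(\varphi)>0$ then $F^{\varphi}_{\mu}>0$ on a set of positive $\nu$-measure, so some $q$ in that set is realized by $b\in M^{y}$, whence $\mu(\varphi(x,b))>0$ and finally $\varphi(a,b)$ holds for some $a\in M^{x}$; combined with $(ii)$ this yields $(v)$.

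For the definable clauses I would control the integrand uniformly. For $(ii)$, continuity of $F^{\varphi}_{\mu}$ and $F^{\varphi}_{\nu}$ makes every iterated integrand continuous, so both triple products exist and agree by a Fubini argument for regular Borel measures on the compact type spaces, while definability of $\mu\otimes\nu$ is checked through the simple-function characterization of Fact \ref{KM:imp}$(v)$. For $(iii)$, smoothness equals finite approximation, and the product of two uniform finite approximations uniformly approximates $\mu\otimes\nu$, forcing a unique extension. For the commutativity clause $(i)$, I would use that a smooth $\mu$ is finitely approximated: given $\varphi$ and $\epsilon$, pick $\bar a\in(M^{x})^{<\omega}$ with $\sup_{b}|\mu(\varphi(x,b))-\Av(\bar a)(\varphi(x,b))|<\epsilon$. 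Because the bound is uniform in the parameter it bounds $|F^{\varphi}_{\mu}-F^{\varphi}_{\Av(\bar a)}|$ on all of $S_{y}(N)$, giving $|\mu\otimes\nu(\varphi)-\Av(\bar a)\otimes\nu(\varphi)|<\epsilon$; reading the product in the other order one approximates the fixed bounded Borel function $F^{\varphi^{*}}_{\nu}$ by $\mathcal L(M)$-simple functions and uses that $\mu|_{M}$ determines $\mu$ to bound $|\nu\otimes\mu(\varphi)-\nu\otimes\Av(\bar a)(\varphi)|$ as well. The two targets coincide by the engine identity, so letting $\epsilon\to0$ gives $(i)$.

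The main obstacle is the two NIP clauses, where the elementary computation is unavailable because the second factor is only assumed invariant. In $(a)$ the measures are merely Borel-definable (Fact \ref{KM:imp}$(vi)$), so the Fubini interchange used for $(ii)$ must be replaced by an honest measure-theoretic Fubini on $S_{y}(N)\times S_{z}(N)$; the work lies in establishing joint measurability and justifying the interchange for Borel- rather than continuous-definable measures, which is the content of \cite{CoGaNA}. Clause $(b)$ is the deepest: with $\nu$ only invariant, neither order is a finite average and symmetry cannot come from Fubini. Here I would invoke that in NIP a $\dfs$ measure is generically stable in the frequency-interpretation sense, hence uniformly approximated by averages drawn from a Morley-sequence-like object, and then let generic stability force the two orders to agree. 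This is precisely where NIP-specific VC/honest-definition technology enters, and it is the step I expect to be hardest, matching the fact that it is the one clause needing the full strength of \cite{HPS}.
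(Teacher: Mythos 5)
The paper does not actually prove this statement: it is recorded as a Fact, with each clause cited to the literature (\cite{HPS}, \cite{CoGan}, \cite{CoGaNA}, \cite{Sibook}), so there is no internal proof to compare against. Judged on its own terms, your outline of $(ii)$, $(iv)$, $(v)$ and the identity $\Av(\bar a)\otimes\nu=\nu\otimes\Av(\bar a)$ is essentially right (though in $(v)$ the step ``some $q$ in that set is realized by $b\in M^{y}$'' needs regularity of $\nu|_N$ to extract a \emph{clopen} set of positive measure before finite satisfiability of $\nu$ can be applied). However, clauses $(i)$ and $(iii)$ contain genuine gaps. For $(iii)$, the premise ``smoothness equals finite approximation'' is false: smooth implies finitely approximated (Fact \ref{KM:imp}$(ii)$), but not conversely --- in the theory of an infinite set, the Dirac measure at the global generic $1$-type is finitely approximated over any $M$ while its restriction to $M$ has many global extensions, so it is not smooth. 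Hence ``the product of two uniform finite approximations uniformly approximates $\mu\otimes\nu$'' cannot ``force a unique extension''; finite approximability never yields uniqueness of extensions. The standard proof instead uses the characterization of smoothness by uniform sandwiching: for each $\varphi(x,y)$ and $\epsilon>0$ there are $N$-formulas $\psi_i(y)$ partitioning the $y$-space and $\theta_i^1(x),\theta_i^2(x)$ with $\theta_i^1(x)\subseteq\varphi(x,b)\subseteq\theta_i^2(x)$ for all $b\models\psi_i$ and $\mu(\theta_i^2\wedge\neg\theta_i^1)<\epsilon$, and one checks this property passes to the product.

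For $(i)$, the first half (replacing $\mu$ by $\Av(\bar a)$ inside $\mu\otimes\nu$) is fine, but the second half does not go through. You need $\int_{S_x(N)} F_\nu^{\varphi^*}\,d(\mu|_N)\approx\int_{S_x(N)} F_\nu^{\varphi^*}\,d(\Av(\bar a)|_N)$, where $F_\nu^{\varphi^*}$ is only a bounded Borel function. It admits no uniform approximation by $\mathcal{L}(N)$-simple functions (it need not be continuous), an $L^1(\mu|_N)$-approximation says nothing about the integral against $\Av(\bar a)|_N$, and the bound $\sup_b|\mu(\varphi(x,b))-\Av(\bar a)(\varphi(x,b))|<\epsilon$ does not control the two measures on the clopen sets defining such a simple function. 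This is precisely where smoothness, rather than finite approximability, must be used: with the sandwich data above, both $\mu\otimes\nu(\varphi)$ and $\nu\otimes\mu(\varphi)$ are trapped between $\sum_i\mu(\theta_i^1)\nu(\psi_i)$ and $\sum_i\mu(\theta_i^2)\nu(\psi_i)$, which differ by at most $\epsilon$. Finally, for $(a)$ and $(b)$ you correctly identify where the difficulty lies but supply no argument; the paper likewise only cites \cite{CoGaNA} and \cite{HPS} for these, so nothing is lost relative to the text, but your write-up should not present those paragraphs as proofs.
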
 

\begin{definition}[T is NIP]\label{prod:inf} Suppose that $\mu \in \mathfrak{M}_{x}(\mathcal{U})$ and $\mu$ is invariant. Then, we define the following measures: 
\begin{enumerate} 
\item $\mu^{(0)}(x_0) = \mu(x_0)$.
\item $\mu^{(n)} = \mu(x_{n}) \otimes \mu^{(n-1)}(x_0,...,x_{n-1})$. 
\item $\mu^{(\omega)} = \bigcup_{i \in \omega} \mu^{(n)}$ (where $\mu^{(\omega)} \in \mathfrak{M}_{(x_i)_{i \in \omega}}(\mathcal{U})$).  
\end{enumerate}
We note that $\mu^{(n)}$ and $\mu^{(\omega)}$ are well-defined by Fact \ref{KM:imp2}, and moreover we do not need to worry about the ordering of the parentheses in the product.
\end{definition}

\section{Sequentially approximated types and measures}
We begin this section by isolating the property of \textit{sequential approximability}. We again remark that these classes of objects are a global version of Khanaki's \textit{Baire 1 definability} \cite{Khanaki2}. We assume that $T$ is countable, but make no other global assumptions about $T$. As usual, $\mathcal{U}$ is a fixed sufficiently saturated model of $T$. We now define sequentially approximated types and measures.

\begin{definition}\label{SA} Let $p \in S_{x}(\mathcal{U})$ and  $\mu \in \mathfrak{M}_{x}(\mathcal{U})$. We say that, 
\begin{enumerate}
    \item $p$ is \textbf{sequentially approximated} if there exists $M \prec \mathcal{U}$ and a sequence of points $(a_i)_{i \in \omega}$ in $M^{x}$ such that $\lim_{i \to \infty} \tp(a_i/\mathcal{U}) = p$ in $S_{x}(\mathcal{U})$. In this case, we say $p$ is \textbf{sequentially approximated over $M$}. 
    \item $\mu$ is \textbf{sequentially approximated} if there exists $M \prec \mathcal{U}$ and a sequence of points $(\overline{a}_{i})_{i \in \omega}$ in $(M^{x})^{<\omega}$ such that $\lim_{i \to \infty} \Av(\overline{a}_{i}) = \mu$ in $\mathfrak{M}_{x}(\mathcal{U})$. In this case, we say $\mu$ is \textbf{sequentially approximated over $M$}.
\end{enumerate}
\end{definition}

We warn the reader that Definition \ref{SA} is only meaningful in the context of types and measures over large models. Indeed, if $M$ is a countable model and $T$ is a countable theory, then for every $p \in S_{x}(M)$, there exists a sequence of points in $M^{x}$ such that $\lim_{i \to \infty} \tp(a_{i}/M) =p$ in $S_{x}(M)$. The analogous statement also holds for measures.

We also emphasize to the reader that there is a real distinction between a type $p$ being sequentially approximated over a model $M$ and its associated Keisler measure $\delta_{p}$ being sequentially approximated over $M$. Proposition \ref{Gabe} gives an example of a type which is not sequentially approximated while its associated Keisler measure is sequentially approximated. However, the other implication holds almost trivially. 

\begin{observation}\label{forward:easy} If a type $p$ in $S_{x}(\mathcal{U})$ is sequentially approximated over a model $M$, then the associated Keisler measure $\delta_p$ is sequentially approximated over $M$.
\end{observation}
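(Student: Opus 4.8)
The plan is to reuse the very sequence witnessing that $p$ is sequentially approximated, reinterpreted as a sequence of length-one tuples. Since $p$ is sequentially approximated over $M$, I would fix a sequence of points $(a_i)_{i \in \omega}$ in $M^{x}$ with $\lim_{i \to \infty} \tp(a_i/\mathcal{U}) = p$ in $S_{x}(\mathcal{U})$, and then set $\overline{a}_i = (a_i)$ to be the singleton tuple. Each such $\overline{a}_i$ lies in $(M^{x})^{<\omega}$, so $(\overline{a}_i)_{i \in \omega}$ is a legitimate candidate for witnessing that $\delta_p$ is sequentially approximated over $M$, and it remains only to check that the associated average measures converge to $\delta_p$.

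The key observation is that for a singleton tuple the average measure is exactly a Dirac measure. Unwinding the definition of $\Av$, for every $\psi(x) \in \mathcal{L}_{x}(\mathcal{U})$ the value $\Av(\overline{a}_i)(\psi(x))$ equals $1$ if $\mathcal{U} \models \psi(a_i)$ and $0$ otherwise, which is precisely $\delta_{\tp(a_i/\mathcal{U})}(\psi(x))$. Hence $\Av(\overline{a}_i) = \delta_{\tp(a_i/\mathcal{U})}$, and the statement reduces to showing that $\lim_{i \to \infty} \delta_{\tp(a_i/\mathcal{U})} = \delta_p$ in $\mathfrak{M}_{x}(\mathcal{U})$. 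In other words, all that is really needed is that the Dirac embedding $\delta \colon S_{x}(\mathcal{U}) \to \mathfrak{M}_{x}(\mathcal{U})$ is continuous, applied to the hypothesis $\lim_{i \to \infty} \tp(a_i/\mathcal{U}) = p$.

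To verify this last convergence I would fix $\psi(x) \in \mathcal{L}_{x}(\mathcal{U})$ and $\epsilon > 0$ and find a threshold $N$ so that $\delta_{\tp(a_n/\mathcal{U})}(\psi(x)) = \delta_p(\psi(x))$ for all $n > N$, since then the relevant difference is $0 < \epsilon$. Here completeness of $p$ does the work: either $\psi(x) \in p$ or $\neg\psi(x) \in p$. In the first case, convergence of types yields $N_{\psi}$ with $\psi(x) \in \tp(a_n/\mathcal{U})$ for all $n > N_{\psi}$, so both Dirac values equal $1$; in the second case, applying the same convergence to $\neg\psi(x)$ yields $N_{\neg\psi}$ with $\neg\psi(x) \in \tp(a_n/\mathcal{U})$ for all $n > N_{\neg\psi}$, so both values equal $0$. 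Taking $N$ to be the appropriate one of these thresholds finishes the argument. There is essentially no obstacle here, which is exactly why the statement holds almost trivially; the only point requiring a moment's care is remembering to invoke completeness of $p$, so that the negation $\neg\psi(x)$ is also covered by the hypothesis on type convergence.
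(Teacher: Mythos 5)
Your proof is correct and follows the same route as the paper: the paper simply notes that $\Av$ of a singleton tuple is the Dirac measure $\delta_{a_i}$ and that $\delta \colon S_{x}(\mathcal{U}) \to \mathfrak{M}_{x}(\mathcal{U})$ is a topological embedding (hence continuous), so convergence of types transfers to convergence of the associated measures. Your explicit verification of continuity via completeness of $p$ is just an unpacking of that one-line argument.
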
  

\begin{proof} If $\lim_{i \to \infty}\tp(a_i/\mathcal{U}) = p$ in $S_{x}(\mathcal{U})$, then $\lim_{i \to \infty} \delta_{a_{i}} = \delta_{p}$ in $\mathfrak{M}_{x}(\mathcal{U})$ since $\delta: S_{x}(\mathcal{U}) \to \mathfrak{M}_{x}(\mathcal{U})$ is a topological embedding. 
\end{proof} 

\subsection{Basic properties} 

We now connect sequentially approximated types and measures to standard model-theoretic properties. For the reader's intuition, sequential approximability (at least in the case of measures) should be thought of as a strong version of finite satisfiability over a small model or a weak version of finite approximability. Sequentially approximated types remain a little more mysterious.  

\begin{proposition}\label{finitesat} Assume that $p \in S_{x}(\mathcal{U})$ and $\mu \in \mathfrak{M}_{x}(\mathcal{U})$. 
\begin{enumerate}[($i$)]
    \item If $p$ and $\mu$ are sequentially approximated over $M$, then $p$ and $\mu$ are finitely satisfiable over $M$. Even more, $p$ and $\mu$ are finitely satisfiable over a countable elementary submodel of $M$. 
    \item If $p$ and $\mu$ are sequentially approximated over $M$, then $p$ and $\mu$ are Borel-definable over $M$. 
    \item If $\mu$ is finitely approximated over $M$, then $\mu$ is sequentially approximated over $M$. $($Warning: In general, this fails for types.$)$
    \item If $T$ is NIP, then $p$ is sequentially approximated over $M$ if and only if $\delta_{p}$ is sequentially approximated over $M$. 
    \item Assume that $k \subseteq \{1,2,...,n\}$ and let $\pi_{k}:S_{n}(\mathcal{U}) \to S_{k}(\mathcal{U})$ and $\rho_{k}:\mathfrak{M}_{n}(\mathcal{U}) \to \mathfrak{M}_{k}(\mathcal{U})$ be the obvious projection maps. 
If $p \in S_{n}(\mathcal{U})$ and $p$ is sequentially approximated over $M$, then $\pi_{k}(p)$ is sequentially approximated over $M$. Similarly, if $\mu \in \mathfrak{M}_{n}(\mathcal{U})$ is sequentially approximated over $M$ then so is $\rho_{k}(\mu)$.
\end{enumerate}
\end{proposition}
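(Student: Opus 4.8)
The plan is to prove each of the five items in turn, leveraging the characterizations and closure facts already established in the excerpt. Items $(i)$ and $(ii)$ follow from general topological and definability considerations, item $(iii)$ is a direct consequence of finite approximation, item $(iv)$ uses the NIP machinery, and item $(v)$ is a soft projection argument.

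\textbf{Items $(i)$ and $(ii)$.} For $(i)$, suppose $\lim_{i \to \infty} \tp(a_i/\mathcal{U}) = p$ with all $a_i \in M^x$. If $\varphi(x) \in p$, then $\varphi(x) \in \tp(a_i/\mathcal{U})$ for all large $i$, so $\mathcal{U} \models \varphi(a_i)$ witnesses finite satisfiability over $M$; the measure case is identical using that $\mu(\varphi(x)) > 0$ forces $\Av(\overline{a}_i)(\varphi(x)) > 0$ eventually, hence some coordinate of some $\overline{a}_i$ satisfies $\varphi$. The strengthening to a countable submodel comes for free: only the countably many points $(a_i)_{i \in \omega}$ (resp. the countably many entries of the $\overline{a}_i$) are used, so taking $M_0 \prec M$ containing all of them gives finite satisfiability over $M_0$. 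For $(ii)$, I would argue that a type or measure which is a limit of a \emph{sequence} of realized types/averages has Borel-definable fiber maps: the map $F^{\varphi}_{\mu,M}$ (or its type analogue) is a pointwise limit of the continuous maps $F^{\varphi}_{\Av(\overline{a}_i),M}$, and a pointwise limit of a sequence of continuous functions is Baire class 1, hence Borel. One must first note that $\mu$ is $M$-invariant, which follows from $(i)$ together with Fact \ref{KM:imp}$(iv)$, so that the fiber maps $F^{\varphi}_{\mu,M}$ are even defined.

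\textbf{Item $(iii)$.} If $\mu$ is finitely approximated over $M$, then for each partitioned formula $\varphi(x,y)$ and each $\epsilon > 0$ there is $\overline{a} \in (M^x)^{<\omega}$ with $\sup_{b} |\mu(\varphi(x,b)) - \Av(\overline{a})(\varphi(x,b))| < \epsilon$. Since $\mathcal{L}$ is countable, $\mathcal{L}_x(\mathcal{U})$ is a union of countably many formula-germs once we fix parameters, but more directly: I would enumerate a countable dense set of constraints. The cleanest route is to note that finite approximation gives, for each $n$, a single tuple $\overline{a}_n$ that $\tfrac{1}{n}$-approximates $\mu$ uniformly over \emph{all} instances $\varphi(x,b)$ of a fixed finite set of partitioned formulas; a standard diagonalization over an enumeration of partitioned $\mathcal{L}$-formulas then produces a sequence $(\overline{a}_n)$ with $\lim_n \Av(\overline{a}_n) = \mu$ in $\mathfrak{M}_x(\mathcal{U})$. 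The warning for types is not something to prove here; Proposition \ref{Gabe} handles that.

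\textbf{Items $(iv)$ and $(v)$.} For $(iv)$, the forward direction is Observation \ref{forward:easy} and needs no NIP. For the converse, assume $T$ is NIP and $\delta_p$ is sequentially approximated over $M$ via $\lim_i \Av(\overline{a}_i) = \delta_p$. The main obstacle is to upgrade convergence of \emph{averages} to a genuine sequence of \emph{single points} whose types converge to $p$; this is exactly where NIP enters, and I expect to invoke the VC/measure-theoretic structure (the finitely approximated characterization of Fact \ref{KM:imp}$(vii)$, or a pointwise-convergence argument specific to Dirac limits) to extract from the tuples $\overline{a}_i$ an actual convergent sequence of realized types. This step is the crux of the whole proposition and will likely require the most care. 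For $(v)$, I would use that the projections $\pi_k$ and $\rho_k$ are continuous (they are dual to the inclusion of formula algebras $\mathcal{L}_k(\mathcal{U}) \hookrightarrow \mathcal{L}_n(\mathcal{U})$), so they preserve sequential limits: if $\lim_i \tp(a_i/\mathcal{U}) = p$ then $\lim_i \pi_k(\tp(a_i/\mathcal{U})) = \pi_k(p)$, and $\pi_k(\tp(a_i/\mathcal{U}))$ is the type of the projected tuple, which still lies in the image of $M$. The measure case is identical, using that $\rho_k(\Av(\overline{a}_i)) = \Av(\rho_k(\overline{a}_i))$ and continuity of $\rho_k$.
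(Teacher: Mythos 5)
Items $(i)$, $(ii)$, and $(v)$ match the paper's proof. The genuine gap is in item $(iv)$: you correctly identify the converse as the crux, but you then leave it unproven, sketching only an intention to ``extract from the tuples $\overline{a}_i$ an actual convergent sequence of realized types'' via VC-theoretic machinery. That extraction is not carried out, and it is also not how the argument goes. The paper's proof is a short reduction that avoids touching the tuples at all: if $\delta_p$ is sequentially approximated over $M$, then by part $(i)$ it is finitely satisfiable over a \emph{countable} elementary submodel $M_0$; finite satisfiability of $\delta_p$ over $M_0$ immediately gives finite satisfiability of $p$ over $M_0$; and then Theorem \ref{sim:conv} (Simon's lemma, which is exactly the statement that in a countable NIP theory a global type finitely satisfiable over a countable model is a limit of realized types from that model) yields that $p$ is sequentially approximated over $M_0$, hence over $M$. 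So the NIP input is Theorem \ref{sim:conv} as a black box, not a new pointwise-convergence argument, and the countability obtained in $(i)$ is precisely what makes that theorem applicable. As written, your proposal does not establish $(iv)$.

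A smaller issue concerns item $(iii)$. You assert that finite approximation gives, for each $n$, a single tuple $\overline{a}_n$ that $\tfrac{1}{n}$-approximates $\mu$ uniformly over all instances of a fixed finite set of partitioned formulas $\varphi_0,\dots,\varphi_n$. This is true, but it is not immediate from the definition, which quantifies over one partitioned formula at a time; in particular you cannot simply intersect the sets of good tuples for each $\varphi_j$. The paper supplies the missing step with an encoding argument: the formulas $\varphi_0(x,y_0),\dots,\varphi_n(x,y_n)$ are packaged into a single partitioned formula $\theta_n(x;\bar y,\bar z)$ using selector variables $\bar z$, so that one application of finite approximation to $\theta_n$ with $\epsilon = \tfrac{1}{n}$ produces a tuple working simultaneously for all $\varphi_j$, $j \leq n$. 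Your diagonalization is then fine, but the encoding (or some substitute for it) is the actual content of the proof and should be included.
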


\begin{proof} We prove the claims. 
\begin{enumerate}[($i$)] 
\item The first part of $(i)$ is obvious. For the second part, we only need to choose a submodel containing a sequence which sequentially approximates our type or measure. Since $T$ is countable, we can choose a countable model.
\item The proofs for both the type and measure cases are similar, so we prove the measure case. Assume that $(\overline{a}_{i})_{i \in \omega}$ is a sequence of points in $(M^{x})^{<\omega}$ such that $\lim_{i \to \infty} \Av(\overline{a}_{i}) = \mu$ in $\mathfrak{M}_{x}(\mathcal{U})$. By part $(i)$, $\mu$ is finitely satisfiable over $M$ and hence $M$-invariant. So, for any partitioned formula $\varphi(x,y)$ in $\mathcal{L}$, the map $F_{\mu}^{\varphi}:S_{y}(M)  \to [0,1]$ is well-defined. By sequential approximability, the sequence of continuous functions $\big(F_{\Av(\overline{a}_{i})}^{\varphi}\big)_{i \in \omega}$ converges pointwise to $F_{\mu}^{\varphi}$. Hence, $F_{\mu}^{\varphi}$ is Baire-1 (and therefore Borel). 
\item This follows from an encoding argument. Let $(\varphi_{n}(x,y_{n}))_{n \in \omega}$ be an enumeration of the partitioned $\mathcal{L}$-formulas. For each $n \in \mathbb{N}$, consider the partitioned formula $\theta_{n}(x;y_0,...,y_n,z_{*},z_0,...,z_n)$ where $|z_*| = |z_i| = 1$ and 
\begin{equation*} 
\theta_{n}(x;\bar{y},\bar{z}) : = \bigwedge_{i \leq n}\left( \left( z_* = z_i \wedge \bigwedge_{\substack{j \leq n \\ j \neq i }} z_{j} \neq z_{*} \right) \to \varphi_{i}(x,y_i) \right). 
\end{equation*} 
Since $\mu$ is finitely approximated over $M$, for $\epsilon = \frac{1}{n}$, there exists some $\overline{a}_n$ in $(M^{x})^{<\omega}$ such that for every $(\bar{b},\bar{c}) \in \mathcal{U}^{\bar{y}\bar{z}}$, 
\begin{equation*} |\Av(\overline{a}_n)(\theta_n(x,\bar{b},\bar{c})) - \mu((\theta_n(x,\bar{b},\bar{c})) | < \epsilon. 
\end{equation*} 
Notice that $\theta_{n}(x;\bar{y},\bar{z})$ encodes the definable sets which are obtained by the formulas $\varphi_{0}(x,y_0),...,\varphi_{n}(x,y_n)$. In particular, for every $b \in \mathcal{U}^{y_{j}}$ where $j \leq n$, consider then tuple $(\bar{d}_{b},\bar{c}_j) = (d_{0},...d_{j-1},b,d_{j+1}...,d_n,c_*,c_0,...,c_n)$ where the $d_{i}$'s are arbitrary and $c_* = c_l$ if and only if $l = j$. Then 
\begin{equation*}
 |\Av(\overline{a}_{n})(\varphi_{j}(x,b)) - \mu(\varphi_{j}(x,b))| = |\Av(\theta(x,\bar{d}_{b},\bar{c}_j)) - \mu((\theta(x,\bar{d}_{b},\bar{c}_j))|. 
\end{equation*}
So for any $j \leq n$ and $b \in \mathcal{U}^{y_{j}}$, 
\begin{equation*}
|\Av(\overline{a}_n)(\varphi_j(x,b)) - \mu(\varphi_j(x,b))| < \frac{1}{n}. 
\end{equation*}
It is clear that  $\lim_{n\to \infty} \Av(\overline{a}_{n}) = \mu$ in $\mathfrak{M}_{x}(\mathcal{U})$. 
\item The forward direction is Observation \ref{forward:easy}. We consider the converse. If $\delta_{p}$ is sequentially approximated over $M$ then $\delta_{p}$ is finitely satisfiable over a countable submodel $M_0$ by $(i)$ above. Then $p$ is finitely satisfiable over $M_0$ and so by Theorem \ref{sim:conv}, $p$ is sequentially approximated over $M_0$ (and also over $M$).
\item Simply consider the approximating sequence restricted to the appropriate coordinates. \qedhere
\end{enumerate} 
\end{proof} 

\begin{proposition}\label{Mazur} A measure $\mu$ is sequentially approximated and definable over $M$ if and only if $\mu$ is finitely approximated over $M$. 
\end{proposition}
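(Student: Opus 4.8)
The plan is to treat the two directions separately: the forward implication is immediate from results already established, while the converse carries all the content and is where Mazur's lemma enters. For ``finitely approximated $\Rightarrow$ sequentially approximated and definable'' I would simply invoke Fact \ref{KM:imp}$(iii)$ (which yields definability) together with Proposition \ref{finitesat}$(iii)$ (which yields sequential approximability). So the real work is the converse. Assume $\mu$ is both definable and sequentially approximated over $M$, witnessed by a sequence $(\overline{a}_i)_{i \in \omega}$ in $(M^x)^{<\omega}$ with $\lim_{i} \Av(\overline{a}_i) = \mu$. I would fix a partitioned $\mathcal{L}$-formula $\varphi(x,y)$ and $\epsilon > 0$ and, since finite approximation is checked formula-by-formula, aim to produce a single $\overline{a} \in (M^x)^{<\omega}$ with $\sup_{b \in \mathcal{U}^y}|\mu(\varphi(x,b)) - \Av(\overline{a})(\varphi(x,b))| < \epsilon$. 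Each $\Av(\overline{a}_i)$ is smooth over $M$ by Fact \ref{KM:imp}$(i)$, hence $M$-invariant, and $\mu$ is $M$-invariant by Proposition \ref{finitesat}$(i)$, so the functions $f := F_{\mu}^{\varphi}$ and $f_i := F_{\Av(\overline{a}_i)}^{\varphi}$ are well-defined on the compact Hausdorff space $K := S_y(M)$; each $f_i$ is continuous (a finite average of characteristic functions of clopen sets), and $f$ is continuous precisely because $\mu$ is definable. Since $b \mapsto \tp(b/M)$ maps $\mathcal{U}^y$ onto $K$ and both measures are $M$-invariant, the target inequality is exactly $\|f - F_{\Av(\overline{a})}^{\varphi}\|_{\infty} < \epsilon$ with the sup-norm taken over $K$.

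I would then work in the Banach space $C(K)$ with the sup-norm. Pointwise convergence $\lim_i \Av(\overline{a}_i) = \mu$ translates to $f_i \to f$ pointwise on $K$, with all functions bounded by $1$. The first key step is to upgrade pointwise convergence to \emph{weak} convergence in $C(K)$: for any regular Borel signed measure $\nu$ on $K$, writing $\nu = \nu^+ - \nu^-$ with $\nu^{\pm}$ finite, the bounded convergence theorem gives $\int f_i \, d\nu^{\pm} \to \int f \, d\nu^{\pm}$, hence $\int f_i \, d\nu \to \int f \, d\nu$; thus $f_i \to f$ weakly. Now Mazur's lemma applies, since the weak closure and the norm closure of the convex set $\conv\{f_i : i \in \omega\}$ coincide, so $f$ lies in the norm closure of this convex hull. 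Consequently there is a finite convex combination $g = \sum_{j} \lambda_j f_j$, with $\lambda_j \geq 0$ and $\sum_j \lambda_j = 1$, such that $\|f - g\|_{\infty} < \epsilon/2$.

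Finally I would realize $g$, up to small error, as $F_{\Av(\overline{a})}^{\varphi}$ for a single tuple. Writing $n_j$ for the length of $\overline{a}_j$, the concatenation $\overline{c}$ repeating each $\overline{a}_j$ exactly $k_j$ times satisfies $\Av(\overline{c}) = \sum_j \frac{k_j n_j}{L}\Av(\overline{a}_j)$ with $L = \sum_j k_j n_j$, and by choosing integers $k_j$ with $L$ large the rational weights $\frac{k_j n_j}{L}$ approximate the $\lambda_j$ within any prescribed total error $\delta$. Since each $\|f_j\|_{\infty} \leq 1$, this gives $\|g - F_{\Av(\overline{c})}^{\varphi}\|_{\infty} \leq \sum_j |\lambda_j - \tfrac{k_j n_j}{L}| < \delta$, and taking $\delta < \epsilon/2$ yields $\|f - F_{\Av(\overline{c})}^{\varphi}\|_{\infty} < \epsilon$, so $\overline{a} := \overline{c}$ is the required witness; as $\varphi$ and $\epsilon$ were arbitrary, $\mu$ is finitely approximated over $M$. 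The main obstacle is the passage from pointwise to weak convergence followed by the correct invocation of Mazur's lemma: it is exactly here that definability of $\mu$ is indispensable, since it places the limit $f$ inside $C(K)$, the Banach space in which Mazur's lemma can be run. Without definability one only has a Baire-$1$ limit, and no sup-norm (finite) approximation is possible.
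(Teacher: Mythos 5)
Your proposal is correct and follows essentially the same route as the paper: the easy direction via Fact \ref{KM:imp}$(iii)$ and Proposition \ref{finitesat}$(iii)$, and the substantive direction by using definability to place $F_{\mu}^{\varphi}$ in $C(S_y(M))$, upgrading pointwise to weak convergence via Riesz representation and dominated convergence, and applying Mazur's lemma to extract a uniformly close convex combination realized as an average over a concatenated tuple. Your final step is slightly more explicit than the paper's (which takes the Mazur combination with rational coefficients, so that the concatenation realizes it exactly rather than up to $\delta$), but the argument is the same.
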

\begin{proof} We first prove the forward direction. The proof is similar to the proof of \cite[Theorem 4.8]{GannNIP}. Fix $\epsilon > 0$. For any partitioned $\mathcal{L}$-formula $\varphi(x,y)$,  consider the map $F_{\mu}^{\varphi}:S_{y}(M) \to [0,1]$. Let $(\overline{a}_{i})_{i \in \omega}$ be a sequence of points in $(M^{x})^{<\omega}$ such that $\lim_{i \to \infty} \Av(\overline{a}_{i}) = \mu$ in $\mathfrak{M}_{x}(\mathcal{U})$. Observe that each map $F_{\Av(\overline{a})}^{\varphi}:S_{y}(M) \to [0,1]$ is continuous and the sequence $\big(F_{\Av(\overline{a}_{i})}^{\varphi}\big)_{i \in \omega}$ converge pointwise to $F_{\mu}^{\varphi}$. Since $\mu$ is definable, the map $F_{\mu}^{\varphi}$ is continuous. By the Riesz representation theorem and dominated convergence theorem, we have that $\big(F_{\Av(\overline{a}_{i})}^{\varphi}\big)_{i \in \omega}$ converges weakly to $F_{\mu}^{\varphi}$ in $C(S_{y}(M))$. By a standard application of Mazur's lemma, there exists a sequence of functions $(g_j)_{j \in \omega}$ such that each $g_j$ is a rational convex combination of $\{F_{\Av(\overline{a}_i)}^{\varphi}: i \leq n_{j}\}$ for some natural number $n_{j}$ and the sequence $(g_j)_{j \in \omega}$ converges uniformly to $F_{\mu}^{\varphi}$. Choose $m \in \mathbb{N}$ so that
\begin{equation*} 
\sup_{p \in S_{y}(M)}|F_{\mu}^{\varphi}(p) - g_m(p)| < \epsilon.
\end{equation*} 
By construction, $g_m = F_{\Av(\overline{c})}^{\varphi}$ for some $\overline{c} \in (M^{x})^{< \omega}$. Notice that 
\begin{equation*} \sup_{b \in \mathcal{U}^{y}}|\mu(\varphi(x,b)) - \Av(\overline{c})(\varphi(x,b))| < \epsilon.
\end{equation*}

For the converse, $\mu$ is definable over $M$ by $(iii)$ of Fact \ref{KM:imp}. Moreover, $\mu$ is sequentially approximated over $M$ by $(iii)$ of Proposition \ref{finitesat}.  
\end{proof} 

We now show that sequentially approximated measures commute with definable measures. It is well-known that in the context of NIP theories definable measures commute with measures which are finitely satisfiable over a small model (see  \cite[Lemma 3.1]{HPS} or \cite[Proposition 7.22]{Sibook}). Recently, it was shown that in general, measures which are finitely satisfiable over a small model (even $\dfs$ measures) do not always commute with definable measures (see \cite[Proposition 7.14]{CGH}). We first present a topological proof (in NIP theories) which shows that measures which are finitely satisfiable over a small model commute with definable measures. We will then modify this proof (by replacing an instance of continuity by the dominated convergence theorem) to show that sequentially approximated measures commute with definable ones in any theory. Recall the following facts.

\begin{fact}\label{cont:meas} Let $\nu \in \mathfrak{M}_{y}(\mathcal{U})$, $N \prec \mathcal{U}$, and $\varphi(x,y)$ be an $\mathcal{L}_{x,y}(N)$ formula. Let $\mathfrak{M}_{x}(\mathcal{U},N)$ denote the collection of measures in $\mathfrak{M}_{x}(\mathcal{U})$ which are finitely satisfiable over $N$. 
\begin{enumerate}[($i$)]
\item If $\nu$ is definable over $N$, then the map from $\mathfrak{M}_{x}(\mathcal{U})$ to $[0,1]$ defined via $\mu \to \nu \otimes \mu(\varphi(x,y))$ is continuous $($\cite[Lemma 5.4]{CGH}$)$. 
\item $($T is NIP$)$ If $\nu$ is any measure, then the map from $\mathfrak{M}_{x}(\mathcal{U},N)$ to $[0,1]$ defined via $\mu \to \mu \otimes \nu(\varphi(x,y))$ is well-defined and continuous $($\cite[Proposition 6.3]{ChGan}$)$.
\end{enumerate}
\end{fact}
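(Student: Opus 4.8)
The plan is to reduce each product to an integral over a type space and then control its dependence on $\mu$; the only genuine work is in part $(ii)$. First I would dispatch well-definedness. For $(i)$, definability of $\nu$ over $N$ makes $\nu$ Borel-definable, so $\nu \otimes \mu$ is defined for every $\mu$. For $(ii)$, a measure $\mu$ finitely satisfiable over $N$ is $N$-invariant by Fact~\ref{KM:imp}$(iv)$ and hence Borel-definable by Fact~\ref{KM:imp}$(vi)$ (this is the one spot where NIP is needed merely to make sense of the product), so $\mu \otimes \nu$ is defined and the displayed map is well-defined on $\mathfrak{M}_{x}(\mathcal{U},N)$.

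For part $(i)$ I would fix $N$ containing the parameters of $\varphi$ and write
\begin{equation*}
\nu \otimes \mu(\varphi(x,y)) = \int_{S_{x}(N)} F_{\nu}^{\varphi^{*}} \, d(\mu|_{N}),
\end{equation*}
where $F_{\nu}^{\varphi^{*}} \colon S_{x}(N) \to [0,1]$ sends $p$ to $\nu(\varphi(a,y))$ for $a \models p$. Because $\nu$ is \emph{definable} over $N$, this $F_{\nu}^{\varphi^{*}}$ is a single \emph{fixed continuous} function. The restriction map $\mu \mapsto \mu|_{N}$ is continuous, and by the very definition of the topology on $\mathfrak{M}_{x}(N)$ the map $\lambda \mapsto \int_{S_{x}(N)} F_{\nu}^{\varphi^{*}} \, d\lambda$ is continuous for this fixed continuous integrand; composing yields continuity of $\mu \mapsto \nu \otimes \mu(\varphi(x,y))$. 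This half is soft and uses nothing beyond definability of $\nu$.

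Part $(ii)$ is the substantive one. Writing
\begin{equation*}
\mu \otimes \nu(\varphi(x,y)) = \int_{S_{y}(N)} F_{\mu}^{\varphi} \, d(\nu|_{N}),
\end{equation*}
the integrand $F_{\mu}^{\varphi}$ now \emph{depends on $\mu$} and is in general only Borel, so the argument of $(i)$ breaks down. For each $q \in S_{y}(N)$ and $b \models q$ the evaluation $\mu \mapsto \mu(\varphi(x,b)) = F_{\mu}^{\varphi}(q)$ is continuous, hence so is every finite average $\mu \mapsto \frac{1}{n}\sum_{j=1}^{n}\mu(\varphi(x,b_{j}))$. The plan is to approximate the functional \emph{uniformly in $\mu$} by such averages: if for every $\epsilon > 0$ there is a finite set $\{q_{1},\dots,q_{n}\} \subseteq S_{y}(N)$, with $b_{j}\models q_{j}$, such that
\begin{equation*}
\sup_{\mu \in \mathfrak{M}_{x}(\mathcal{U},N)} \Big| \int_{S_{y}(N)} F_{\mu}^{\varphi}\, d(\nu|_{N}) - \frac{1}{n}\sum_{j=1}^{n} \mu(\varphi(x,b_{j})) \Big| < \epsilon,
\end{equation*}
then $\mu \mapsto \mu \otimes \nu(\varphi(x,y))$ is a uniform limit of continuous functions and is therefore continuous.

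The entire difficulty is concentrated in producing one finite sample that works \emph{simultaneously} for every $\mu$, and this is the only place NIP is essential: $\{q_{1},\dots,q_{n}\}$ must be an $\epsilon$-approximation of $\nu|_{N}$ for the whole family $\{F_{\mu}^{\varphi} : \mu \in \mathfrak{M}_{x}(\mathcal{U},N)\}$ at once, which is precisely the uniformity supplied by the VC theorem, since the NIP formula $\varphi$ defines a set system of finite VC dimension admitting $\epsilon$-approximations of size bounded in $\epsilon$ alone. For measures $\Av(\overline{a})$ with $\overline{a}\in (N^{x})^{<\omega}$ this is clean, since $F_{\Av(\overline{a})}^{\varphi}$ is literally a convex combination of the clopen indicators $\mathbf{1}_{[\varphi(a,y)]}$ ($a \in N^{x}$) and a VC $\epsilon$-approximation for that base system transfers to all of them by averaging. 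The delicate, and expected main obstacle, is to propagate this uniformity from that dense set of $N$-averages to an arbitrary $\mu$ in $\mathfrak{M}_{x}(\mathcal{U},N)$, using that each such $\mu$ is a limit of such averages and that the VC bound is insensitive to this passage; this is the crux of the argument and the natural point to invoke the machinery behind Fact~\ref{KM:imp}$(vii)$.
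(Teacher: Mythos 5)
Part $(i)$ of your proposal is correct and is the standard argument: since $\nu$ is definable over $N$, the integrand $F_{\nu}^{\varphi^{*}}$ is a fixed continuous function on $S_{x}(N)$, and continuity of $\mu \mapsto \int F_{\nu}^{\varphi^{*}}\,d(\mu|_{N})$ is immediate from the definition of the topology on $\mathfrak{M}_{x}(N)$ together with continuity of restriction. Your well-definedness remarks for both parts are also fine.

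Part $(ii)$, however, has a genuine gap exactly at the step you yourself flag as ``the crux.'' Your plan is to produce types $q_{1},\dots,q_{n}$ (a VC $\epsilon$-approximation of $\nu$ for the class of $\varphi$-instances) and then transfer the estimate $\bigl|\int F_{\mu}^{\varphi}\,d(\nu|_{N}) - \tfrac{1}{n}\sum_{j}F_{\mu}^{\varphi}(q_{j})\bigr| < \epsilon$ from the average measures $\Av(\overline{a})$, $\overline{a}\in (N^{x})^{<\omega}$, to an arbitrary $\mu \in \mathfrak{M}_{x}(\mathcal{U},N)$ ``using that each such $\mu$ is a limit of such averages.'' This transfer does not go through as stated: a general $\mu$ finitely satisfiable over $N$ is only a weak-$*$ limit of a \emph{net} $(\Av(\overline{a}_{i}))_{i}$, and while $\tfrac{1}{n}\sum_{j}\Av(\overline{a}_{i})(\varphi(x,b_{j})) \to \tfrac{1}{n}\sum_{j}\mu(\varphi(x,b_{j}))$ is immediate, the convergence $\int F_{\Av(\overline{a}_{i})}^{\varphi}\,d(\nu|_{N}) \to \int F_{\mu}^{\varphi}\,d(\nu|_{N})$ is precisely the continuity assertion being proved, so the argument is circular; and one cannot instead pass the estimate through the pointwise limit $F_{\mu}^{\varphi} = \lim_{i} F_{\Av(\overline{a}_{i})}^{\varphi}$, because $g \mapsto \int g\,d(\nu|_{N})$ is not continuous for pointwise convergence of nets of uniformly bounded Borel functions (dominated convergence is a sequential statement). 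Invoking ``the machinery behind Fact \ref{KM:imp}$(vii)$'' does not help here, since that fact concerns measures that are also definable, whereas $\mu$ is only assumed finitely satisfiable. The route the paper points to (see the remark immediately after the Fact, and \cite[Proposition 6.3]{ChGan}) sidesteps all of this: take a smooth extension $\lambda$ of $\nu|_{N_{1}}$ over some $N_{2} \succ N_{1} \succ N$; then for $\mu$ finitely satisfiable over $N$ (hence Borel-definable by NIP) one has $\mu\otimes\nu(\varphi(x,y)) = \mu\otimes\lambda(\varphi(x,y)) = \lambda\otimes\mu(\varphi(x,y)) = \int_{S_{x}(N_{2})} F_{\lambda}^{\varphi^{*}}\,d(\mu|_{N_{2}})$ by commutativity of smooth with Borel-definable measures, and since $\lambda$ is smooth, hence definable, over $N_{2}$, the integrand is again a fixed continuous function and part $(ii)$ reduces to the soft argument of part $(i)$. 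NIP enters through Borel-definability of $\mu$ and the existence of smooth extensions, not through the VC theorem.
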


We remark that statement $(ii)$ of Fact \ref{cont:meas} requires NIP for two reasons. First, it is not true in general that measures which are finitely satisfiable over a small model are Borel definable. In NIP theories, this is true ($(vi)$ of Fact \ref{KM:imp}). Secondly, the proof that this map is continuous relies on the existence of a smooth extension of $\nu|_N$. Without NIP, this map need not be continuous. The first proof of the following proposition can be found in \cite{HPS}.

\begin{proposition}[T is NIP] Assume that $\mu \in \mathfrak{M}_{x}(\mathcal{U})$ and $\nu \in \mathfrak{M}_{y}(\mathcal{U})$. If $\mu$ is finitely satisfiable over a small model and $\nu$ is definable, then $\mu \otimes \nu = \nu \otimes \mu$. 
\end{proposition}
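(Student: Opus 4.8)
The plan is to deduce the general identity from the already-established commutativity of \emph{smooth} measures with Borel-definable ones (Fact \ref{KM:imp2}$(i)$), by approximating $\mu$ with average measures and transferring the identity to the limit through the two continuity statements in Fact \ref{cont:meas}. Fix a formula $\varphi(x,y) \in \mathcal{L}_{x,y}(\mathcal{U})$; it suffices to show $\mu \otimes \nu(\varphi(x,y)) = \nu \otimes \mu(\varphi(x,y))$. First I would fix a small model $M$ over which $\mu$ is finitely satisfiable, and choose a small model $N \succ M$ such that $\nu$ is definable over $N$ and $N$ contains every parameter appearing in $\varphi$. Since $M \prec N$, the measure $\mu$ and every average measure $\Av(\overline{a})$ with $\overline{a} \in (M^{x})^{<\omega}$ are finitely satisfiable over $N$, hence all lie in $\mathfrak{M}_{x}(\mathcal{U},N)$.

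On $\mathfrak{M}_{x}(\mathcal{U},N)$ I would introduce the two functionals $G_1(\lambda) = \nu \otimes \lambda(\varphi(x,y))$ and $G_2(\lambda) = \lambda \otimes \nu(\varphi(x,y))$. The map $G_1$ is continuous on all of $\mathfrak{M}_{x}(\mathcal{U})$ by Fact \ref{cont:meas}$(i)$, using that $\nu$ is definable over $N$; and $G_2$ is well-defined and continuous on $\mathfrak{M}_{x}(\mathcal{U},N)$ by Fact \ref{cont:meas}$(ii)$. This last step is exactly where NIP enters, both to guarantee that each product $\lambda \otimes \nu$ is defined (via Borel-definability of the finitely satisfiable $\lambda$, Fact \ref{KM:imp}$(vi)$) and to obtain the continuity.

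Next I would check that $G_1$ and $G_2$ agree on the average measures: for $\overline{a} \in (M^{x})^{<\omega}$ the measure $\Av(\overline{a})$ is smooth over $M$ by Fact \ref{KM:imp}$(i)$, and $\nu$, being definable, is in particular Borel-definable, so Fact \ref{KM:imp2}$(i)$ yields $\Av(\overline{a}) \otimes \nu = \nu \otimes \Av(\overline{a})$; evaluating at $\varphi$ gives $G_1(\Av(\overline{a})) = G_2(\Av(\overline{a}))$. By Fact \ref{Avcls}$(iv)$, $\mu$ lies in the closure of $\{\Av(\overline{a}) : \overline{a} \in (M^{x})^{<\omega}\}$ inside $\mathfrak{M}_{x}(\mathcal{U})$, and this closure stays within $\mathfrak{M}_{x}(\mathcal{U},N)$ since it consists of measures finitely satisfiable over $M$. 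As $G_1$ and $G_2$ are continuous on $\mathfrak{M}_{x}(\mathcal{U},N)$ and agree on a set whose closure contains $\mu$, they agree at $\mu$, giving $\nu \otimes \mu(\varphi(x,y)) = \mu \otimes \nu(\varphi(x,y))$. Since $\varphi$ was arbitrary, $\mu \otimes \nu = \nu \otimes \mu$.

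The genuine content of the argument is packaged inside Fact \ref{cont:meas}, and the one step I expect to be the real obstacle is the continuity of $\lambda \mapsto \lambda \otimes \nu(\varphi)$ on the finitely satisfiable measures, which requires NIP (for the Borel-definability that makes the product meaningful and for the continuity itself, the latter routing through a smooth extension of $\nu|_N$). Everything else is bookkeeping: arranging a single model $N$ over which $\nu$ is definable, $\mu$ is finitely satisfiable, and $\varphi$ has its parameters, so that both $G_1$ and $G_2$ are simultaneously continuous on one compact space that contains both $\mu$ and all the approximating average measures.
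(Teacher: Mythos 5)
Your proposal is correct and follows essentially the same route as the paper's proof: approximate $\mu$ by average measures over a model where $\mu$ is finitely satisfiable, commute $\Av(\overline{a})$ with $\nu$ via smoothness and Borel-definability (Fact \ref{KM:imp2}$(i)$), and pass to the limit using the two continuity statements of Fact \ref{cont:meas}. The paper writes this as a chain of integral identities along a net rather than as two continuous functionals agreeing on a dense set, but the content is identical.
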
 

\begin{proof} Fix a formula $\varphi(x,y) \in \mathcal{L}_{x,y}(\mathcal{U})$. Choose $N$ such that $\mu$ is finitely satisfiable over $N$, $\nu$ is definable over $N$, and $N$ contains all the parameters from $\varphi$. Since $\mu$ is finitely satisfiable over $N$, there exists a net of measures $(\Av(\overline{a}_i))_{i \in I}$ such that each $\overline{a}_{i} \in (N^{x})^{< \omega}$ and $\lim_{i \in I} \Av(\overline{a}_i) = \mu$ in $\mathfrak{M}_{x}(\mathcal{U})$ ($(iv)$ of Fact \ref{Avcls}). By Fact \ref{cont:meas} 

\begin{align*} \mu \otimes \nu(\varphi(x,y)) = \int_{S_{y}(N)}F_{\mu}^{\varphi} d(\nu|_N) &\overset{(a)}{=}\ \lim_{i \in I} \int_{S_{y}(N)}F_{\Av(\overline{a}_i)}^{\varphi} d(\nu|_N)\\
& \overset{(b)}{=}\ \lim_{i \in I} \int_{S_{x}(N)}F_{\nu}^{\varphi^*} d(\Av(\overline{a}_{i})|_{N})\\
& \overset{(c)}{=}\ \int_{S_{x}(N)}F_{\nu}^{\varphi^*} d(\mu|_N) = \nu \otimes \mu (\varphi(x,y)).\\
\end{align*} 

Where the equalities $(a)$ and $(c)$ follow from the fact that continuous functions commute with nets. The equality $(b)$ is simple to check and is also justified by statement $(i)$ of Fact \ref{KM:imp2}.
\end{proof}

\begin{proposition}\label{prop:com} Sequentially approximated and definable measures commute. Assume that $\mu \in \mathfrak{M}_{x}(\mathcal{U})$ and $\nu \in \mathfrak{M}_{y}(\mathcal{U})$. If $\mu$ is sequentially approximated and $\nu$ is definable, then $\mu \otimes \nu = \nu \otimes \mu$. 
\end{proposition}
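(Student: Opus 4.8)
The plan is to imitate the three-step computation of the preceding proposition essentially verbatim, replacing the net of average measures by the approximating \emph{sequence} and replacing the one appeal to continuity that genuinely used NIP (statement $(ii)$ of Fact \ref{cont:meas}) by the dominated convergence theorem. Fix an arbitrary formula $\varphi(x,y) \in \mathcal{L}_{x,y}(\mathcal{U})$; it suffices to show $\mu \otimes \nu(\varphi(x,y)) = \nu \otimes \mu(\varphi(x,y))$. Let $(\overline{a}_i)_{i \in \omega}$ be a sequence in $(M^x)^{<\omega}$ witnessing that $\mu$ is sequentially approximated over $M$, so that $\lim_{i \to \infty}\Av(\overline{a}_i) = \mu$ in $\mathfrak{M}_x(\mathcal{U})$. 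I would then choose $N \prec \mathcal{U}$ containing $M$ and all parameters of $\varphi$, and over which $\nu$ is definable. By Proposition \ref{finitesat}, $\mu$ is finitely satisfiable and Borel-definable over $N$, so $F_{\mu}^{\varphi} : S_y(N) \to [0,1]$ is a well-defined Borel function; and each $\Av(\overline{a}_i)$ is smooth over $N$ by $(i)$ of Fact \ref{KM:imp}, hence definable, so each $F_{\Av(\overline{a}_i)}^{\varphi}$ is continuous.

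First I would record the analogue of equality $(b)$: since each $\Av(\overline{a}_i)$ is smooth and $\nu$ is Borel-definable, $(i)$ of Fact \ref{KM:imp2} gives $\Av(\overline{a}_i) \otimes \nu = \nu \otimes \Av(\overline{a}_i)$, i.e.
\begin{equation*}
\int_{S_y(N)} F_{\Av(\overline{a}_i)}^{\varphi}\, d(\nu|_N) = \int_{S_x(N)} F_{\nu}^{\varphi^*}\, d(\Av(\overline{a}_i)|_N)
\end{equation*}
for every $i$. Next, for the analogue of equality $(c)$, I would note that $F_{\nu}^{\varphi^*} : S_x(N) \to [0,1]$ is continuous because $\nu$ is definable, and that $\Av(\overline{a}_i)|_N \to \mu|_N$ in $\mathfrak{M}_x(N)$ (restriction being continuous). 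Since integrating a fixed continuous function is a continuous operation on the space of measures, the right-hand sides above converge to $\int_{S_x(N)} F_{\nu}^{\varphi^*}\, d(\mu|_N) = \nu \otimes \mu(\varphi(x,y))$.

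The crux is the analogue of equality $(a)$, namely
\begin{equation*}
\lim_{i \to \infty}\int_{S_y(N)} F_{\Av(\overline{a}_i)}^{\varphi}\, d(\nu|_N) = \int_{S_y(N)} F_{\mu}^{\varphi}\, d(\nu|_N) = \mu \otimes \nu(\varphi(x,y)).
\end{equation*}
This is exactly where the NIP proof invoked the continuity of $\lambda \mapsto \lambda \otimes \nu(\varphi)$ on measures finitely satisfiable over $N$, which is unavailable in general. Instead I would argue pointwise: for each $q \in S_y(N)$, choosing $b \models q$ gives $F_{\Av(\overline{a}_i)}^{\varphi}(q) = \Av(\overline{a}_i)(\varphi(x,b)) \to \mu(\varphi(x,b)) = F_{\mu}^{\varphi}(q)$ by convergence of the sequence, so $F_{\Av(\overline{a}_i)}^{\varphi} \to F_{\mu}^{\varphi}$ pointwise. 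These functions are uniformly bounded by $1$ and $\nu|_N$ is a probability measure, so the dominated convergence theorem permits passing the limit inside the integral. Combining the three steps — the left-hand integrals converge to $\mu \otimes \nu(\varphi)$ by $(a)$, the right-hand integrals converge to $\nu \otimes \mu(\varphi)$ by $(c)$, and they agree for each $i$ by $(b)$ — closes the argument.

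I expect the main obstacle to be precisely this step $(a)$, specifically verifying that the hypotheses of dominated convergence genuinely hold: one must extract honest \emph{pointwise} convergence of $F_{\Av(\overline{a}_i)}^{\varphi}$ to the merely Borel-measurable limit $F_{\mu}^{\varphi}$, rather than convergence along a net, for which the theorem would say nothing. It is exactly the restriction to a \emph{sequence} built into sequential approximability that makes dominated convergence applicable and thereby dispenses with the NIP hypothesis; every other step is formal and identical to the preceding proof.
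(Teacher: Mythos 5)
Your proposal is correct and is essentially the paper's own proof: the same three-step computation, with equality $(a)$ justified by pointwise convergence of $F_{\Av(\overline{a}_i)}^{\varphi}$ to $F_{\mu}^{\varphi}$ plus dominated convergence, equality $(b)$ by the smooth/Borel-definable commutation of $(i)$ of Fact \ref{KM:imp2}, and equality $(c)$ by the continuity of $\lambda \mapsto \nu \otimes \lambda(\varphi)$ from $(i)$ of Fact \ref{cont:meas}. You also correctly identify the one place where the sequential (as opposed to net) hypothesis is genuinely used.
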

\begin{proof}
Fix a formula $\varphi(x,y) \in \mathcal{L}_{x,y}(\mathcal{U})$. Choose $N$ such that $\mu$ is sequentially approximated over $N$, $\nu$ is definable over $N$, and $N$ contains all the parameters from $\varphi$. Let $(\overline{a}_{i})_{i \in \omega}$ be a sequence of points in $(N^{x})^{<\omega}$ such that $\lim_{i \to \infty} \Av(\overline{a}_{i}) = \mu$ in $\mathfrak{M}_{x}(\mathcal{U})$. Now we consider the following computation. 

\begin{align*} \mu \otimes \nu(\varphi(x,y)) = \int_{S_{y}(N)} F_{\mu}^{\varphi} d(\nu|_N)  &\overset{(a)}{=}\ \lim_{i \to \infty}\int_{S_{y}(N)} F_{\Av(\overline{a}_{i})}^{\varphi} d(\nu|_N)\\
& \overset{(b)}{=}\ \lim_{i \to \infty} \int_{S_{x}(N)} F_{\nu}^{\varphi^{*}} d(\Av(\overline{a}_{i})|_N)\\
& \overset{(c)}{=}\ \int_{S_{x}(N)} F_{\nu}^{\varphi^{*}} d(\mu|_N) = \nu \otimes \mu(\varphi(x,y)).\\
\end{align*} 

Where the equality $(a)$ now holds from the dominated convergence theorem, equality $(c)$ holds from $(i)$ of Fact \ref{cont:meas} and the observation that continuous functions commute with nets, and equality $(b)$ is easy to check (also $(i)$ of Fact \ref{KM:imp2}). 
\end{proof} 

\begin{corollary} Let $\mu \in \mathfrak{M}_{x}(\mathcal{U})$ and $\nu \in \mathfrak{M}_{y}(\mathcal{U})$. If $\mu$ is finitely approximated and $\nu$ is definable, then $\mu \otimes \nu = \nu \otimes \mu$.
\end{corollary}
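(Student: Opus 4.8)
The plan is to obtain this as an immediate consequence of Proposition~\ref{prop:com} together with part~$(iii)$ of Proposition~\ref{finitesat}. The strategy is simply to upgrade the hypothesis ``$\mu$ is finitely approximated'' to the weaker conclusion ``$\mu$ is sequentially approximated,'' and then quote the commutation result we have already established for sequentially approximated and definable measures.

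Concretely, first I would use the hypothesis that $\mu$ is finitely approximated to fix a model $M \prec \mathcal{U}$ over which $\mu$ is finitely approximated. Part~$(iii)$ of Proposition~\ref{finitesat} then gives, over that same $M$, that $\mu$ is sequentially approximated; in particular $\mu$ is sequentially approximated in the sense of Definition~\ref{SA}. Since $\nu$ is definable by assumption, both hypotheses of Proposition~\ref{prop:com} are now met, and that proposition yields $\mu \otimes \nu = \nu \otimes \mu$ directly.

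The only point that deserves a word is that the model witnessing sequential approximability of $\mu$ and the model witnessing definability of $\nu$ need not coincide a priori. This is harmless: both properties persist when the base model is enlarged (sequential approximability because the same approximating sequence still lies in $(N^{x})^{<\omega}$ and the convergence $\lim_{i \to \infty}\Av(\overline{a}_i) = \mu$ is computed in $\mathfrak{M}_{x}(\mathcal{U})$ regardless of the base, and definability by the standard fact that $F_{\nu}^{\varphi}$ remains continuous over any larger base), and indeed this reconciliation is already carried out inside the proof of Proposition~\ref{prop:com}, which selects a single $N$ containing both witnessing models together with the parameters of the test formula $\varphi$. Consequently there is no genuine obstacle here; the entire content of the corollary is supplied by the two results it invokes.
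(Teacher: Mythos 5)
Your proposal is correct and is exactly the paper's argument: invoke part $(iii)$ of Proposition \ref{finitesat} to pass from finite approximability to sequential approximability, then apply Proposition \ref{prop:com}. The extra remark about reconciling the two base models is a reasonable precaution but, as you note, is already absorbed into the proof of Proposition \ref{prop:com}.
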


\begin{proof} By (iii) of Proposition \ref{finitesat}, $\mu$ is sequentially approximated. Apply Proposition \ref{prop:com}.
\end{proof} 

\subsection{Egorov's theorem} It is interesting to note that sequentially approximated measures are not too far away from finitely approximated measures. In particular, if we fix some measure on the parameter space, any sequentially approximated measure is \textit{almost} finitely approximated. This result is in a similar vein as Khanaki's \textit{almost definable} coheirs in the local setting (\cite{Khanaki1}). A direct application of Egorov's theorem gives our result. 

\begin{theorem}[Egorov's Theorem] Let $(X,B,\mu)$ be a finite measure space. Assume that $(f_i)_{i \in \omega}$ is a sequence of measurable functions from $X \to \mathbb{R}$ such that $(f_i)_{i \in \omega}$ converges to a function $f$ pointwise. Then for every $\epsilon > 0$ there exists a  $Y_{\epsilon} \in B$ such that $f_i|_{Y_{\epsilon}}$ converges to $f|_{Y_{\epsilon}}$ uniformly on $Y_{\epsilon}$ and $\mu(X \backslash Y_{\epsilon}) < \epsilon$. 
\end{theorem}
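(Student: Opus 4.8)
The plan is to run the classical telescoping argument, building $Y_\epsilon$ as the complement of a countable union of ``bad sets'' whose measures can be made summably small. First I would fix, for each pair of natural numbers $n,k \geq 1$, the measurable set
\begin{equation*}
E_{n,k} = \bigcap_{i \geq n} \Big\{ x \in X : |f_i(x) - f(x)| < \tfrac{1}{k} \Big\},
\end{equation*}
which records the points at which the tail $(f_i)_{i \geq n}$ already lies within $1/k$ of $f$. Since each $f_i$ and $f$ are measurable, each $E_{n,k}$ lies in $B$. For fixed $k$, the sequence $(E_{n,k})_{n \in \omega}$ is increasing in $n$, and pointwise convergence of $(f_i)$ to $f$ guarantees that every $x \in X$ eventually lands in some $E_{n,k}$, so $\bigcup_n E_{n,k} = X$.

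The next step uses finiteness of $\mu$ in an essential way. By continuity of measure from below, $\mu(E_{n,k}) \to \mu(X)$ as $n \to \infty$, and since $\mu(X) < \infty$ we may subtract to conclude $\mu(X \setminus E_{n,k}) \to 0$. Hence for each $k$ I can choose $n_k$ with $\mu(X \setminus E_{n_k,k}) < \epsilon / 2^k$. Setting $Y_\epsilon = \bigcap_{k \geq 1} E_{n_k,k}$, a union bound gives
\begin{equation*}
\mu(X \setminus Y_\epsilon) = \mu\Big( \bigcup_{k \geq 1} (X \setminus E_{n_k,k}) \Big) \leq \sum_{k \geq 1} \frac{\epsilon}{2^k} = \epsilon,
\end{equation*}
so $Y_\epsilon$ has the required large measure.

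Finally I would verify uniform convergence on $Y_\epsilon$. Given any $\delta > 0$, choose $k$ with $1/k < \delta$. Since $Y_\epsilon \subseteq E_{n_k,k}$, for every $x \in Y_\epsilon$ and every $i \geq n_k$ we have $|f_i(x) - f(x)| < 1/k < \delta$, and crucially the threshold $n_k$ is independent of $x$; this is exactly uniform convergence of $(f_i|_{Y_\epsilon})$ to $f|_{Y_\epsilon}$.

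The only genuine obstacle is the appeal to finiteness of $\mu$: without it, continuity from below need not force $\mu(X \setminus E_{n,k}) \to 0$ (the classical counterexample being indicator functions of translates marching off to infinity on $\R$ under Lebesgue measure), and the statement fails outright. Everything else is bookkeeping --- the measurability of the $E_{n,k}$, their monotonicity in $n$, and the summable choice of the exceptional measures $\epsilon/2^k$.
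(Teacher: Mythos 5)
Your proof is correct and is the standard telescoping argument for Egorov's theorem; the paper itself does not prove this statement but simply cites Kadets' textbook, and the proof found there is essentially the one you give. One cosmetic point: your union bound yields $\mu(X\setminus Y_{\epsilon})\leq\epsilon$ rather than the strict inequality in the statement, which you can fix by choosing $n_k$ so that $\mu(X\setminus E_{n_k,k})<\epsilon/2^{k+1}$.
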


A proof of Egorov's theorem can be found in \cite[Theorem 3.2.4.1]{Acourse}. Restating this theorem in our context gives the following result.

\begin{corollary} Assume that $p$ and $\mu$ are sequentially approximated over $M$. Let $\nu \in \mathfrak{M}_{y}(M)$. Then, for every $\epsilon > 0$, there exists a Borel set $Y_{\epsilon} \subset S_{y}(M)$ such that
\begin{enumerate}
    \item $\nu(Y_{\epsilon}) > 1 - \epsilon$.
    \item For every $\delta > 0$ and every partitioned $\mathcal{L}$-formula $\varphi(x,y)$, there exists $\overline{a}_{\delta}$ in $(M^{x})^{<\omega}$ such that for every $b \in \mathcal{U}^{y}$ so that $\tp(b/M) \in Y_{\epsilon}$, we have
    \begin{equation*}
        |\mu(\varphi(x,b)) - \Av(\overline{a}_{\delta})(\varphi(x,b))| < \delta.
    \end{equation*}
    \item For every partitioned $\mathcal{L}$-formula $\varphi(x,y)$, there exists $a$ in $M^{x}$ such that for every $b \in \mathcal{U}^{y}$ so that $\tp(b/M) \in Y_{\epsilon}$, we have
    \begin{equation*}
        \varphi(x,b) \in p \iff \models \varphi(a,b).
    \end{equation*}
\end{enumerate}
\end{corollary}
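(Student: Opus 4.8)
The plan is to transport the problem onto the single finite measure space $(S_{y}(M),\nu)$ — where $\nu \in \mathfrak{M}_{y}(M)$ is identified with its associated regular Borel probability measure on the Stone space — and apply Egorov's theorem simultaneously to countably many sequences of functions, intersecting the resulting sets of uniform convergence. First I would fix the data. Since $p$ and $\mu$ are sequentially approximated over $M$, choose $(c_i)_{i \in \omega}$ in $M^{x}$ with $\lim_{i} \tp(c_i/\mathcal{U}) = p$ and $(\overline{a}_i)_{i \in \omega}$ in $(M^{x})^{<\omega}$ with $\lim_{i} \Av(\overline{a}_i) = \mu$. Because $\mathcal{L}$ is countable, enumerate the partitioned $\mathcal{L}$-formulas as $(\varphi_n(x,y))_{n \in \omega}$. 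By Proposition \ref{finitesat}$(i)$ both $\delta_p$ and $\mu$ are $M$-invariant, so each $F_{\mu}^{\varphi_n}, F_{\Av(\overline{a}_i)}^{\varphi_n} : S_{y}(M) \to [0,1]$ is well-defined; the latter are continuous and converge pointwise to $F_{\mu}^{\varphi_n}$, which is Baire-$1$ hence Borel (as in Proposition \ref{finitesat}$(ii)$). On the type side, set $g_i^{\varphi_n}(q) = \mathbf{1}[\varphi_n(c_i,y) \in q]$ and $g^{\varphi_n}(q) = \mathbf{1}[\varphi_n(x,b) \in p]$ for $b \models q$: since $c_i \in M^{x}$, each $g_i^{\varphi_n}$ is the characteristic function of a clopen set, $M$-invariance of $p$ makes $g^{\varphi_n}$ well-defined, and $g_i^{\varphi_n} \to g^{\varphi_n}$ pointwise by convergence of the $\tp(c_i/\mathcal{U})$.

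Next I would apply Egorov's theorem to each of these countably many sequences. Enumerate the sequences $\{(F_{\Av(\overline{a}_i)}^{\varphi_n})_i\}_n \cup \{(g_i^{\varphi_n})_i\}_n$ as $(h_i^{(k)})_i$, $k \in \omega$, each converging pointwise to a $\nu$-measurable limit $h^{(k)}$. Applying Egorov with tolerance $\epsilon/2^{k+2}$ yields Borel sets $Y_k \subseteq S_{y}(M)$ with $\nu(S_{y}(M) \setminus Y_k) < \epsilon/2^{k+2}$ on which $h_i^{(k)} \to h^{(k)}$ uniformly. Put $Y_{\epsilon} = \bigcap_k Y_k$; then $\nu(S_{y}(M) \setminus Y_{\epsilon}) < \epsilon/2 < \epsilon$, giving clause $(1)$, and every one of the sequences converges uniformly on $Y_{\epsilon}$.

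Finally I would read off clauses $(2)$ and $(3)$ from this uniform convergence. For $(2)$, given $\delta > 0$ and $\varphi = \varphi_n$, uniform convergence produces $i$ with $\sup_{q \in Y_{\epsilon}} |F_{\Av(\overline{a}_i)}^{\varphi_n}(q) - F_{\mu}^{\varphi_n}(q)| < \delta$; taking $\overline{a}_{\delta} = \overline{a}_i$ and evaluating at $q = \tp(b/M)$ for $b$ with $\tp(b/M) \in Y_{\epsilon}$ gives $|\mu(\varphi(x,b)) - \Av(\overline{a}_{\delta})(\varphi(x,b))| < \delta$. For $(3)$, since the $g_i^{\varphi_n}$ are $\{0,1\}$-valued, uniform convergence on $Y_{\epsilon}$ forces some $i$ with $g_i^{\varphi_n} \equiv g^{\varphi_n}$ on $Y_{\epsilon}$; setting $a = c_i$ yields, for every $b$ with $\tp(b/M) \in Y_{\epsilon}$, the equivalence $\models \varphi(a,b) \iff \varphi(x,b) \in p$.

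The only genuine subtlety — and the step I would watch most carefully — is arranging the error budget so that a \emph{single} $Y_{\epsilon}$ serves all formulas at once; this is precisely where countability of $\mathcal{L}$ (hence of the whole family of sequences) enters, through the geometric splitting $\epsilon/2^{k+2}$ and the countable intersection. Everything else is routine given the earlier results: well-definedness of the $F^{\varphi}$'s and $g^{\varphi}$'s via $M$-invariance, measurability of the Baire-$1$ limits, and the passage from integer-valued uniform convergence to exact agreement in clause $(3)$.
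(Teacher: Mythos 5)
Your proof is correct and is exactly the argument the paper intends: the paper states the corollary as a ``direct application of Egorov's theorem'' and omits the details, which you have supplied faithfully (countably many pointwise-convergent sequences of measurable functions on $(S_{y}(M),\nu)$, Egorov with a geometric error budget, a single countable intersection, and the observation that uniform convergence of $\{0,1\}$-valued functions forces eventual equality for clause $(3)$). No gaps.
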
 

\section{Generically stable types}

Throughout this section, we let $T$ be a countable theory and $\mathcal{U}$ be a monster model of $T$. We show that if a type $p$ is generically stable over a small submodel $M$ of $\mathcal{U}$, then $p$ is sequentially approximated over $M$. Toward proving this result, we actually prove a slightly stronger lemma than what is necessary. Namely, let $p$ be a $\dfs$ type and let $M$ be a countable model such that $p$ is $\dfs$ over $M$ (for any $\dfs$ type, these models always exist by (iv) of Fact \ref{gfs:facts}). We show that there exists a special sequence of points in $M$ such that the \textit{limiting behavior} of this sequence \textit{resembles} a Morley sequence in $p$ over $M$.  In the case where $p$ is generically stable over $M$, we show that this special sequence converges to $p$. This is enough to show the result since every generically stable type is generically stable over some countable model. We now begin with a discussion on eventually indiscernible sequences, which were introduced in \cite{Invariant}. 

\begin{definition} Let $(c_i)_{i \in \omega}$ be a sequence of points in $\mathcal{U}^{x}$ and $A \subset \mathcal{U}$. We say that $(c_i)_{i \in \omega}$ is an \textbf{eventually indiscernible sequence over $A$} if for any formula $\varphi(x_0,...,x_k)$ in $\mathcal{L}_{(x_i)_{i \in \omega}}(A)$, there exists some natural number $N_{\varphi}$ such that for any indices $n_{k} > .... > n_{0} > N_{\varphi}$ and $m_{k} > ... > m_{0} > N_{\varphi}$, we have that 
\begin{equation*}
\mathcal{U} \models \varphi(c_{n_0},...,c_{n_{k}}) \leftrightarrow \varphi(c_{m_0},...,c_{m_k}).
\end{equation*} 
\end{definition} 

\begin{fact}\label{eventual} Let $(b_i)_{i \in \omega}$ be a sequence of points in $\mathcal{U}^{x}$ and $A \subset \mathcal{U}$ such that $|A| = \aleph_0$. Then there exists a subsequence $(c_i)_{i \in \omega}$ of $(b_i)_{i \in \omega}$ such that $(c_i)_{i \in \omega}$ is eventually indiscernible over $A$.
\end{fact}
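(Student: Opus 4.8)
The plan is to use an iterated application of Ramsey's theorem followed by a diagonalization, where the essential ingredient is the countability of $A$. Since $\mathcal{L}$ is countable and $|A| = \aleph_0$, the set of formulas $\mathcal{L}_{(x_i)_{i \in \omega}}(A)$ is countable, so I would fix an enumeration $(\varphi_n)_{n \in \omega}$ of it, where $\varphi_n$ has free variables among $x_0, \dots, x_{k_n}$.

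First I would construct a decreasing chain of infinite index sets $\omega = I_{-1} \supseteq I_0 \supseteq I_1 \supseteq \cdots$. Given $I_{n-1}$, I color each $(k_n+1)$-element subset $\{j_0 < \cdots < j_{k_n}\}$ of $I_{n-1}$ by the truth value of $\varphi_n(b_{j_0}, \dots, b_{j_{k_n}})$ in $\mathcal{U}$. By Ramsey's theorem there is an infinite monochromatic subset $I_n \subseteq I_{n-1}$, meaning that $\varphi_n$ has a constant truth value across all increasing $(k_n+1)$-tuples drawn from $I_n$. Next I would diagonalize: writing each $I_n$ in increasing order as $\{a^n_0 < a^n_1 < \cdots\}$, I set $d_i := a^i_i$ and define the subsequence by $c_i := b_{d_i}$. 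Because the $I_n$ are nested decreasingly, for every $i \geq n$ we have $d_i \in I_i \subseteq I_n$, so the tail $(c_i)_{i \geq n}$ uses only indices lying in $I_n$.

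Finally, to verify eventual indiscernibility, I would fix an arbitrary $\varphi = \varphi_j \in \mathcal{L}_{(x_i)_{i \in \omega}}(A)$ and set $N_\varphi := j$. For any indices $n_k > \cdots > n_0 > j$, each underlying index satisfies $d_{n_\ell} \in I_{n_\ell} \subseteq I_j$, so $(c_{n_0}, \dots, c_{n_k})$ is an increasing tuple drawn from $I_j$; by the monochromaticity of $I_j$ with respect to $\varphi_j$, the truth value $\mathcal{U} \models \varphi_j(c_{n_0}, \dots, c_{n_k})$ equals the constant color of $I_j$ and is therefore independent of the particular indices chosen. Comparing with any other such tuple $(c_{m_0}, \dots, c_{m_k})$ gives the required biconditional, so $(c_i)_{i \in \omega}$ is eventually indiscernible over $A$.

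There is no deep obstacle here; the content is entirely bookkeeping, and the two genuine inputs are Ramsey's theorem and the countability of $A$. I would stress that countability is exactly what lets a single diagonal sequence stabilize all formulas simultaneously: the enumeration converts the $\aleph_0$-many Ramsey homogenizations into a linear chain of refinements that the diagonal then threads, and it is this that forces the eventual (rather than uniform-from-the-start) character of the indiscernibility.
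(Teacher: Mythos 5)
Your proposal is correct and is exactly the argument the paper has in mind: the paper omits the proof of this fact, describing it as ``a standard application of Ramsey's theorem and taking the diagonal'' and pointing to the analogous continuous version (Proposition \ref{correct}), which likewise enumerates the countably many formulas, iterates Ramsey's theorem to produce a nested chain of infinite homogeneous index sets, and diagonalizes. Your verification that the diagonal tail beyond $N_\varphi = j$ lands inside $I_j$ is the right bookkeeping and matches the paper's construction.
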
 
The proof is a standard application of Ramsey's theorem and taking the diagonal (as mentioned in \cite{Invariant}). We prove a ``continuous" version of this fact in the next section and the proof is analogous (see Proposition \ref{correct} for details). 

For any eventually indiscernible sequence $(c_{i})_{i \in \omega}$ over a set of parameters $A$, we can associate to this sequence a unique type in $S_{(x_i)_{i \in \omega}}(A)$. We call this the \textit{eventual  Ehrenfeucht-Mostowski type} (or $\EEM$-type) of $(c_{i})_{i \in \omega}$ over $A$. We now give the formal definition.

\begin{definition} Let $(b_{i})_{i \in \omega}$ be a sequence of points in $\mathcal{U}^{x}$ and $A \subset \mathcal{U}$. Then the \textbf{eventual Ehrenfeucht-Mostowski type} (or \textbf{EEM-type}) of $(b_i)_{i \in \omega}$ over $A$, which is written as $\EEM((b_i)_{i \in \omega})/A)$, is a subset of $\mathcal{L}_{(x_i)_{i \in \omega}}(A)$ defined as follows:

Let $\varphi(x_{i_{0}},...,x_{i_{k}})$ be a formula in $\mathcal{L}_{(x_{i})_{i \in \omega}}(A)$ where the indices are ordered $i_{0} < ... < i_{k}$. Then $\varphi(x_{i_0},...x_{i_{k}}) \in \EEM((b_{i})_{i \in \omega})/A)$ if and only if there exists an $N_{\varphi}$ such that for any $n_k > ... > n_0 > N_{\varphi}$, we have that $\mathcal{U} \models \varphi(b_{n_0},..., b_{n_k})$.
\end{definition}

Notice that an $\EEM$-type of a sequence is always indiscernible in the following sense: If we have indices $i_{0},...,i_{k}$ and $j_{0},...,j_{k}$ where $i_{0} < ... < i_{k}$ and $j_{0}<...<j_{k}$, 
then $\varphi(x_{i_{0}},...,x_{i_{k}})$ is in the 
$\EEM$-type of $(b_{i})_{i \in \omega}$ over $A$ if and only if $\varphi(x_{j_0},...,x_{j_k})$ is. This follows directly from the definition. We have some basic observations.

\begin{observation} Let be $(c_i)_{i \in \omega}$ an eventually indiscernible sequence over $A$. 
\begin{enumerate}
\item Then $\EEM((c_{i})_{i \in \omega}/A)$ is a complete type in $S_{(x_i)_{i \in \omega}}(A)$.
\item If $(c_i)_{i \in \omega}$ is $A$-indiscernible, then $\EEM((c_i)_{i \in \omega}/A) = \EM((c_i)_{i \in \omega}/A)$. 
\item If $\tp((b_i)_{i \in \omega}/A) = \EEM((c_i)_{i \in \omega}/A)$, then $(b_i)_{i \in \omega}$ is $A$-indiscernible. 
\end{enumerate} 
\end{observation}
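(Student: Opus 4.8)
The plan is to derive all three parts directly from the definitions of eventual indiscernibility, the $\EEM$-type, and the $\EM$-type, invoking compactness only for the consistency half of part (1).

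For part (1), I would treat completeness and consistency separately. Completeness is immediate: given any $\chi \in \mathcal{L}_{(x_i)_{i\in\omega}}(A)$, list its free variables in increasing index order as $\chi(x_{i_0},\dots,x_{i_k})$ with $i_0 < \dots < i_k$, and apply eventual indiscernibility to obtain some $N_\chi$ beyond which the truth value of $\chi(c_{n_0},\dots,c_{n_k})$, for $n_k > \dots > n_0 > N_\chi$, is constant. If that constant value is ``true'' then $\chi \in \EEM$, and otherwise $\neg\chi \in \EEM$ (taking $N_{\neg\chi} = N_\chi$); hence the $\EEM$-type decides every formula. For consistency I would realize each finite subset inside the sequence itself. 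Given $\varphi_1,\dots,\varphi_r \in \EEM$ whose free variables all lie among $x_0,\dots,x_M$, set $N = \max_j N_{\varphi_j}$ and choose indices $m_0 < \dots < m_M$ all exceeding $N$; substituting $x_t \mapsto c_{m_t}$ then realizes every $\varphi_j$ simultaneously, since the chosen indices, restricted to the variables of $\varphi_j$, form an increasing tuple lying above $N_{\varphi_j}$. Compactness yields consistency, and a consistent set deciding every formula is a complete type.

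Parts (2) and (3) are then short. For part (2), under $A$-indiscernibility the truth value of $\varphi(c_{i_0},\dots,c_{i_k})$ depends only on $\varphi$ and not on the increasing tuple, so ``holds for all increasing tuples'' and ``holds for all sufficiently late increasing tuples'' coincide; comparing this against the definitions of $\EM$ and $\EEM$ gives the equality in both directions. For part (3), I would use the index-invariance of the $\EEM$-type recorded just before the observation: since $\tp((b_i)_{i\in\omega}/A) = \EEM((c_i)_{i\in\omega}/A)$, membership of $\varphi(x_{i_0},\dots,x_{i_k})$ is insensitive to the choice of increasing index tuple $i_0 < \dots < i_k$, which translates precisely into $\models \varphi(b_{i_0},\dots,b_{i_k}) \leftrightarrow \varphi(b_{j_0},\dots,b_{j_k})$ for every pair of increasing tuples, i.e.\ into $A$-indiscernibility of $(b_i)_{i\in\omega}$.

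The only step carrying any content is the consistency argument in part (1), and even there the subtlety is purely bookkeeping with variable indices, namely ensuring that a single increasing choice $m_0 < \dots < m_M$ simultaneously witnesses formulas whose variable sets may differ and overlap, rather than anything requiring a new idea.
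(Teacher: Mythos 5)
Your proposal is correct and is essentially the argument the paper has in mind: the paper dismisses this observation as ``Clear from the definitions and discussion above,'' and your write-up just fills in those routine details (eventual constancy of truth values for completeness, realizing finite fragments far out along the sequence plus compactness for consistency, and the index-invariance remark for parts (2) and (3)). No gaps.
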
 

\begin{proof} Clear from the definitions and discussion above. 
\end{proof}

We warn the reader that an eventually indiscernible sequence need not ``realize" its own $\EEM$-type. Consider the following example: 

\begin{example} Let $T_{<}$ be the theory of $(\mathbb{R};<)$. Let $\mathcal{U}$ be a monster model of $T_{real}$ and $\mathbb{R} \prec \mathcal{U}$. Then the sequence $(a_i)_{i \in \omega}$ where $a_i = i$ is eventually indiscernible over $\mathbb{R}$ while the sequence $(b_i)_{i \in \omega}$ where $b_i = i(-1)^{i}$ is not. Clearly, $(a_i)_{i \in \omega}$ is not $\mathbb{R}$-indiscernible. Moreover, for each $r \in \mathbb{R}$, the formula $x_0 > r$ is in $\EEM((a_i)_{i \in \omega}/\mathbb{R})$ while $a_{1} >2$ clearly does not hold. So if $\tp((c_i)_{i \in \omega}/\mathbb{R})) = \EEM((a_i)_{i \in \omega}/\mathbb{R})$, then $c_i > \mathbb{R}$ for each $i \in \omega$. 
\end{example} 

The next two lemmas prove the bulk of this section's main theorem and their proofs are similar to the proof of Theorem \ref{sim:conv}. The proof strategy for this theorem is the following: If $p$ is in $S_{x}(\mathcal{U})$ and $p$ is $\dfs$, then we can find a countable model $M$ such that $p$ is $\dfs$ over $M$. Let $I$ be a Morley sequence in $p$ over $M$. Using the fact that $p$ is finitely satisfiable over $M$, we can find a sequence of points in $M^{x}$ which converge to $p|_{MI}$ in $S_{x}(MI)$. After moving to an eventually indiscernible subsequence, we show that the $\EEM$-type of this eventually indiscernible sequence is precisely $p^{\omega}|_{M}$. With the stronger assumption that our type $p$ is generically stable (instead of just $\dfs$), we show that this eventually indiscernible subsequence must converge to $p$ in $S_{x}(\mathcal{U})$.

\begin{lemma}\label{dfs:lemma} Suppose $p$ is in $S_{x}(\mathcal{U})$ and $p$ is $\dfs$ over $M$ where $|M| = \aleph_0$. Then there exists a sequence $(c_i)_{i \in \omega}$ in $M^{x}$ such that $\EEM((c_i)_{i \in \omega}/M) = p^{\omega}|_{M}$. 
\end{lemma}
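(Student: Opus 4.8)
The plan is to realize $p^{(\omega)}|_M$ as the $\EEM$-type of a sequence drawn from $M$ that approximates a Morley sequence. Since $p$ is $\dfs$ over $M$, it is both finitely satisfiable and definable over the countable model $M$. First I fix a Morley sequence $I=(b_i)_{i\in\omega}$ in $p$ over $M$, so that $b_j\models p|_{Mb_0\cdots b_{j-1}}$ for all $j$ and $(b_0,\dots,b_k)\models p^{(k)}|_M$; note $MI$ is countable. Because $p|_{MI}$ is finitely satisfiable over $M$, Fact~\ref{Avcls}$(i)$ together with the metrizability of $S_x(MI)$ (there being only countably many $\mathcal{L}(MI)$-formulas) yields a sequence $(a_i)_{i\in\omega}$ in $M^x$ with $\lim_{i\to\infty}\tp(a_i/MI)=p|_{MI}$. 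Applying Fact~\ref{eventual} to the countable set $MI$, I pass to a subsequence $(c_i)_{i\in\omega}$ that is eventually indiscernible over $MI$; it still satisfies $\lim_{i\to\infty}\tp(c_i/MI)=p|_{MI}$, and crucially each $c_i$ lies in $M^x$.

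The heart of the argument is a strengthened statement proved by induction on $k$ (for all $m$ at once): for every $\mathcal{L}(M)$-formula $\varphi(x_0,\dots,x_k,y_0,\dots,y_{m-1})$,
\[
\varphi(x_0,\dots,x_k,b_0,\dots,b_{m-1})\in\EEM((c_i)_{i\in\omega}/MI)\quad\Longleftrightarrow\quad \mathcal{U}\models\varphi(b_m,\dots,b_{m+k},b_0,\dots,b_{m-1}).
\]
The base case $k=0$ is immediate from $\lim_i\tp(c_i/MI)=p|_{MI}$, which identifies the eventual truth value of $\varphi(c_n,b_0,\dots,b_{m-1})$ with membership in $p$, together with the Morley property $b_m\models p|_{Mb_0\cdots b_{m-1}}$. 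For the inductive step I send the largest index $n_k\to\infty$ with $n_0<\dots<n_{k-1}$ fixed and large: since $c_{n_0},\dots,c_{n_{k-1}}\in M$, the formula $\varphi(c_{n_0},\dots,c_{n_{k-1}},x,b_0,\dots,b_{m-1})$ is an $\mathcal{L}(MI)$-formula, so convergence over $MI$ replaces the eventual-in-$n_k$ truth value by the condition $\varphi(c_{n_0},\dots,c_{n_{k-1}},x,b_0,\dots,b_{m-1})\in p$. Here definability of $p$ enters: regarding $x$ as the object variable, there is an $\mathcal{L}(M)$-formula $(d_p\varphi)(x_0,\dots,x_{k-1},\bar y)$ expressing exactly this membership. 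Hence $\varphi(x_0,\dots,x_k,\bar b_{<m})\in\EEM$ iff $(d_p\varphi)(x_0,\dots,x_{k-1},\bar b_{<m})\in\EEM$, and the induction hypothesis applied to $d_p\varphi$ (which has one fewer object variable) reduces this to $\mathcal{U}\models(d_p\varphi)(b_m,\dots,b_{m+k-1},\bar b_{<m})$. Unwinding the definition of $d_p\varphi$ and then using $b_{m+k}\models p|_{Mb_0\cdots b_{m+k-1}}$ to witness the resulting instance of $p$ gives precisely $\mathcal{U}\models\varphi(b_m,\dots,b_{m+k},\bar b_{<m})$, closing the induction.

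Specializing to $m=0$ gives, for every $\mathcal{L}(M)$-formula $\varphi(x_0,\dots,x_k)$, that $\varphi\in\EEM((c_i)_{i\in\omega}/M)$ iff $\mathcal{U}\models\varphi(b_0,\dots,b_k)$, i.e. iff $\varphi\in p^{(\omega)}|_M$; since both are complete types, $\EEM((c_i)_{i\in\omega}/M)=p^{(\omega)}|_M$, as required. The main obstacle — and the reason the lemma needs $\dfs$ rather than mere finite satisfiability — is that the hypothesis only supplies convergence over the fixed countable set $MI$, whereas the induction must control the behavior of $p$ on parameters taken from the sequence itself. This is resolved by the two features exploited above: the $c_i$ lie in $M$, so parameters drawn from them are genuine $\mathcal{L}(M)$-parameters and the relevant formulas stay over $MI$ where convergence is available; and definability of $p$ converts the limiting membership condition back into a first-order formula that can be fed to the inductive hypothesis. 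Carrying the auxiliary parameters $b_0,\dots,b_{m-1}$ throughout the claim is exactly what allows the Morley sequence to keep witnessing $p$ at each stage.
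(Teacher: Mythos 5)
Your proof is correct, and the setup (Morley sequence $I$, a sequence in $M^{x}$ converging to $p|_{MI}$ by finite satisfiability, passage to an eventually indiscernible subsequence over $MI$, then an induction on the number of object variables) coincides with the paper's. But the inductive step is genuinely different. The paper argues by contradiction: it uses the fact that a Morley sequence in a $\dfs$ type is totally indiscernible over $M$ to move the new variable to the front, obtains $\theta(x,a_0,\dots,a_k)\in p$, pulls back to a single parameter $c_{n_*}\in M^{x}$ via convergence over $MI$, applies the induction hypothesis to the $\mathcal{L}(M)$-formula $\theta(c_{n_*},x_0,\dots,x_k)$, and then derives a clash between the two eventual-indiscernibility thresholds. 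You instead use the \emph{definability} half of $\dfs$ directly: sending the last index to infinity converts the eventual truth value into membership in $p$, which the definition schema $d_p\varphi$ turns back into an $\mathcal{L}(M)$-formula in one fewer object variable, to which the induction hypothesis applies; carrying the auxiliary Morley parameters $b_0,\dots,b_{m-1}$ through the strengthened claim lets the sequence $I$ keep witnessing $p$ at each stage. Your route is direct rather than by contradiction and makes visible exactly where definability enters; the paper's route avoids definition schemata entirely, trading them for the symmetry (total indiscernibility) of Morley sequences in $\dfs$ types. Both arguments use finite satisfiability for the approximating sequence and both genuinely need $\dfs$ rather than mere finite satisfiability, just through different halves of the hypothesis.
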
 

\begin{proof} Let $I = (a_i)_{i \in \omega}$ be a Morley sequence in $p$ over $M$. Since $T$, $M$, and $I$ are countable, $\mathcal{L}_{x}(MI)$ is countable. It follows that $p|_{MI}$ is countable and we may enumerate this collection of formulas as $(\varphi_{i}(x))_{i \in \omega}$. Since $p$ is $\dfs$ over $M$, in particular $p$ is finitely satisfiable over $M$. For each natural number $n$, we choose $b_{n}$ in $M^{x}$ such that $\mathcal{U} \models \bigwedge_{j \leq n} \varphi_{j}(b_n)$. By construction, we have that $\lim_{i \to \infty} \tp(b_i/MI) = p|_{MI}$ in $S_{x}(MI)$. By Fact \ref{eventual}, we may choose a subsequence $(c_{i})_{i \in \omega}$ of $(b_{i})_{i \in \omega}$ such that $(c_i)_{i \in \omega}$ is eventually indiscernible over $MI$. For ease of notation, we write $(c_{i})_{i \in \omega}$ as $J$.

We now show that \textit{$\EEM(J/M) = \EM(I/M) =  p^{\omega}|_{M}$}. We remind the reader that $\EM(I/M) = p^{\omega}|_{M}$ follows directly from the definition of a Morley sequence. We prove the first equality by induction on the number of free variables occurring in a formula. We begin with the base case. It suffices to show that for every $\varphi(x_0) \in \mathcal{L}_{x_{0}}(M)$, if $\varphi(x_0) \in \EM(I/M)$, then $\varphi(x_0) \in \EEM(J/M)$.  Notice that  $ \lim _{n\to \infty} \tp(b_n/MI) = p|_{MI}$, and $(c_i)_{i \in \omega}$ is a subsequence of $(b_n)_{n \in \omega}$, $\lim _{i\to \infty} \tp(c_i/MI) = p|_{MI}$. This clearly implies the base case. 

 Fix $k$ and suppose that for any formula $\theta(x_0,...,x_k)$ in $\mathcal{L}_{x_{0},...,x_{k}}(M)$, we have that $\theta(x_0,...,x_k) \in \EM(I/M)$ if and only if $\theta(x_0,...,x_k) \in \EEM(J/M)$.

Towards a contradiction, we assume that $\neg \theta(x_0,...,x_{k+1}) \in \EEM(J/M)$ and $\theta(x_0,...,x_{k+1}) \in \EM(I/M)$. Since $\neg \theta(\overline{x}) \in \EEM(J/M)$, there exists some natural number $N_{\theta_{1}}$ such that for any $n_{k+1} > ... > n_{0} > N_{\theta_{1}}$, we have that $\mathcal{U}\models \neg \theta(c_{n_{0}},...,c_{n_{k+1}})$. Since $\theta(\overline{x}) \in \EM(I/M)$, we conclude that $\mathcal{U} \models \theta(a_0,...,a_{k+1})$. Since $p$ is $\dfs$ over $M$, $I$ is totally indiscernible over $M$ by Fact \ref{gfs:facts}. Therefore, $\mathcal{U} \models \theta(a_{k+1}, a_{0}...,a_{k})$ and so $\theta(x,a_{0},...,a_{k}) \in p|_{Ma_{0},...,a_{k}}$. Since $\lim_{i \to \infty}\tp(c_i/MI) = p|_{MI}$, there exists some $N_{\theta_2}$ such that for every $n > N_{\theta_{2}}$, we have that $\mathcal{U}\models \theta(c_{n},a_{0},...,a_{k})$. Choose $n_{*} > \max\{N_{\theta_{1}},N_{\theta_{2}}\}$. Then the formula $\theta(c_{n_*},x_{0},...,x_{k}) \in \tp(a_0,...,a_{k}/M)$. By our induction hypothesis, we have that $\theta(c_{n_{*}},\overline{x}) \in \EEM(J/M)$ and so there exists $N_{\theta_{3}}$ such that for any $m_{k}> ... > m_{0} > N_{\theta_{3}}$, we have that $\mathcal{U}\models \theta(c_{n_*}, c_{m_{0}},...,c_{m_{k}})$. Now consider what happens when $m_0 > \max\{N_{\theta_{3}}, n_{*}\}$. Then $m_k > ... > m_{0} > n_{*} > N_{\theta_1}$ and so $\mathcal{U} \models \neg \theta(c_{n_*},c_{m_0},...,c_{m_k})$ by our assumption. However, $m_k > ... > m_{0} > N_{\theta_{3}}$ and therefore $\mathcal{U} \models \theta(c_{n_*},c_{m_0},...,c_{m_{k}})$. This is a contradiction.
\end{proof}

\begin{lemma}\label{gs:lemma} Suppose $p$ is in $S_{x}(\mathcal{U})$ and $M \prec \mathcal{U}$. Assume that $p$ is generically stable over $M$. If $(c_i)_{i \in \omega}$ is a sequence in $M^{x}$ such that $\EEM((c_i)_{i \in \omega}/M) = p^{\omega}|_{M}$, then $\lim_{i \to \infty} tp(c_i/\mathcal{U}) = p$. 
\end{lemma}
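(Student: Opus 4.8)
The plan is to argue by contradiction using the definition of generic stability directly: $p$ is generically stable over $M$ precisely when every Morley sequence in $p$ over $M$ converges globally to $p$. Since $\lim_{i\to\infty}\tp(c_i/\mathcal{U})=p$ means exactly that for every $\varphi(x,b)\in p$ (with $b\in\mathcal{U}^{y}$) one has $\mathcal{U}\models\varphi(c_i,b)$ for all but finitely many $i$, it suffices to fix such a $\varphi(x,b)\in p$ and rule out that $\mathcal{U}\models\neg\varphi(c_i,b)$ for infinitely many $i$. Note first that, since the hypothesis makes $\EEM((c_i)_{i\in\omega}/M)=p^{(\omega)}|_M$ a \emph{complete} type, the sequence $(c_i)_{i\in\omega}$ is eventually indiscernible over $M$, and both eventual indiscernibility and the $\EEM$-type are preserved under passing to subsequences.

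So suppose toward a contradiction that $\varphi(x,b)\in p$ while $\mathcal{U}\models\neg\varphi(c_i,b)$ for infinitely many $i$. I would first pass to the subsequence $(c_i')_{i\in\omega}$ consisting of exactly those indices; then $\EEM((c_i')_{i\in\omega}/M)=p^{(\omega)}|_M$ and $\mathcal{U}\models\neg\varphi(c_i',b)$ for every $i$. The core step is then to manufacture a genuine Morley sequence carrying this bad pattern. Consider the partial type over $Mb$ in variables $(x_i)_{i\in\omega}$ given by $p^{(\omega)}|_M(x_0,x_1,\dots)\cup\{\neg\varphi(x_i,b):i\in\omega\}$. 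I claim it is finitely satisfiable: a finite piece mentions only $x_0,\dots,x_k$, consisting of finitely many formulas of $p^{(\omega)}|_M$ together with some conjuncts $\neg\varphi(x_{i_j},b)$. Choosing an increasing tuple $c_{n_0}',\dots,c_{n_k}'$ with $n_0<\dots<n_k$ all exceeding the (finitely many) eventual-indiscernibility thresholds for these formulas realizes the $p^{(\omega)}|_M$-part by the definition of the $\EEM$-type, while each $\neg\varphi(c_{n_j}',b)$ holds automatically by the choice of subsequence.

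By saturation of $\mathcal{U}$ the partial type is realized by a sequence $(d_i)_{i\in\omega}$ with $\tp((d_i)_{i\in\omega}/M)=p^{(\omega)}|_M$ and $\mathcal{U}\models\neg\varphi(d_i,b)$ for all $i$. Since $p$ is $M$-invariant (generic stability implies $\dfs$, hence invariance), a sequence whose type over $M$ equals $p^{(\omega)}|_M$ is exactly a Morley sequence in $p$ over $M$: invariance forces $\tp(d_i/Md_{<i})=p|_{Md_{<i}}$ for each $i$. Now generic stability of $p$ over $M$ gives $\lim_{i\to\infty}\tp(d_i/\mathcal{U})=p$, and since $\varphi(x,b)\in p$ this forces $\mathcal{U}\models\varphi(d_i,b)$ for all sufficiently large $i$, contradicting $\mathcal{U}\models\neg\varphi(d_i,b)$ for every $i$. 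This contradiction establishes $\lim_{i\to\infty}\tp(c_i/\mathcal{U})=p$.

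I expect the main obstacle to be precisely the middle step: converting the \emph{merely eventual} indiscernibility over $M$ (the information encoded by the $\EEM$-type hypothesis) into an \emph{honest} Morley sequence in $p$ over $M$ that still refutes $\varphi(x,b)$, since this is what brings the hypothesis into the exact form to which the definition of generic stability applies. The two small identifications underpinning it, namely that subsequences preserve the $\EEM$-type and that realizations of $p^{(\omega)}|_M$ are Morley sequences, are the crux; everything else is bookkeeping about thresholds and the meaning of convergence in $S_x(\mathcal{U})$.
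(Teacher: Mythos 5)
Your proof is correct and follows essentially the same route as the paper: extract a subsequence carrying the offending formula, use the (eventual indiscernibility encoded by the) $\EEM$-type to show $p^{(\omega)}|_{M}\cup\{\neg\varphi(x_i,b):i\in\omega\}$ is finitely consistent, realize it to obtain a Morley sequence in $p$ over $M$, and contradict generic stability. The only difference is cosmetic: the paper first proves the sequence $(\tp(c_i/\mathcal{U}))_{i\in\omega}$ converges to \emph{some} type and then identifies the limit, whereas you fold both steps into a single contradiction argument, which is a harmless streamlining.
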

\begin{proof} Let $p$, $(c_{i})_{i \in \omega}$ and $M$ be as in the statement of the lemma. Let $J = (c_{i})_{i \in \omega}$. We first argue that the sequence of global types $(\tp(c_i/\mathcal{U}))_{i \in \omega}$ converges and then argue that this sequence converges to $p$. 

\textbf{Claim 1:} The sequence $(\tp(c_i/\mathcal{U}))_{i \in \omega}$ converges to a some type in $S_{x}(\mathcal{U})$.

It suffices to argue that for any formula $\psi(x) \in \mathcal{L}_{x}(\mathcal{U})$, $\lim_{i \to \infty} \mathbf{1}_{\psi}(c_i)$ exists (recall that $\mathbf{1}_{\psi(x)}$ is the characteristic function of the definable set $\psi(x)$). Assume not. Then we may choose a subsequence $(c_i')_{i \in \omega}$ of $(c_i)_{i \in \omega}$ such that $ \mathcal{U} \models \psi(c_{i}') \leftrightarrow \neg \psi(c_{i+1}')$. For notational purposes, we also denote $(c'_i)_{i \in \omega}$ as $J'$. It is clear that $(c_{i}')_{i \in \omega}$ is also eventually indiscernible over $M$ and $\EEM((c_i')_{i \in \omega}/M) = \EEM((c_i)_{i \in \omega}/M)$. By using $J'$, one can show that the following type is finitely consistent:  
\begin{equation*}
    \Theta_1 = \EEM(J'/M) \cup  \bigcup_{\textit{$i$ is even}}\{\psi(x_i) \wedge \neg \psi(x_{i+1})\}. 
\end{equation*}
Let $(d_i)_{i \in \omega}$ realize this type. Then $(d_i)_{i \in \omega}$ is a Morley sequence in $p$ over $M$ because
\begin{equation*}
   \EM((d_i)_{i \in \omega}/M) = \EEM(J'/M) = \EEM(J/M) = p^{\omega}|_{M}.
\end{equation*} Then $\mathcal{U} \models \psi(d_{i})$ if and only if $i$ is even. This contradicts generic stability since $\lim_{i \to \infty} \tp(d_i/M)$ does not converge.  

\textbf{Claim 2:} The sequence $(\tp(c_i/\mathcal{U}))_{i \in \omega}$ converges to $p$. 

Again, assume not. By claim 1, $\lim_{i \to \infty} \tp(c_i/\mathcal{U}) = q$ for some $q \in S_{x}(\mathcal{U})$. By assumption, $q \neq p$ and so there exists a formula $\psi(x)$ such that $\psi(x) \in p$ and $\neg \psi(x) \in q$. Since $(\tp(c_i/\mathcal{U}))_{i \in \omega}$ converges to $q$, there is an $N$ such that for every $n > N$, we have that $\mathcal{U} \models \neg \theta(c_n)$. By a similar argument as the previous claim, one can show the following type is finitely consistent: 
\begin{equation*}
    \Theta_2 = \EEM(J/M) \cup \bigcup_{i \in \omega} \neg \theta(x_{i}). 
\end{equation*}
Again, we let $(d_i)_{i \in \omega}$ realize this type. Then $(d_i)_{i \in \omega}$ is a Morley sequence in $p$ over $M$ and we have that $\lim_{i \to \infty} \tp(d_i/\mathcal{U}) \neq p$ in $S_{x}(\mathcal{U})$. This again contradicts the definition of generic stability.
\end{proof}

\begin{theorem}\label{gstheorem} Suppose $p$ is in $S_{x}(\mathcal{U})$ and $p$ is generically stable (over $M$). Then $p$ is sequentially approximated (over $M$).
\end{theorem}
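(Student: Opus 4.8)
The plan is to combine the two preceding lemmas after first reducing to the countable-model setting. Concretely, suppose $p \in S_x(\mathcal{U})$ is generically stable over some small model $M$. By $(i)$ of Fact \ref{gfs:facts}, $p$ is then $\dfs$ over $M$, and by $(iv)$ of Fact \ref{gfs:facts} (using that $T$ is countable) there is a countable elementary submodel $M_0$ such that $p$ is both generically stable and $\dfs$ over $M_0$. The point of passing to $M_0$ is that Lemma \ref{dfs:lemma} requires a countable model in order to enumerate $p|_{MI}$ and run the eventual-indiscernibility construction.

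Once over $M_0$, I would first apply Lemma \ref{dfs:lemma}: since $p$ is $\dfs$ over the countable model $M_0$, there exists a sequence $(c_i)_{i \in \omega}$ of points in $M_0^{x}$ with $\EEM((c_i)_{i \in \omega}/M_0) = p^{\omega}|_{M_0}$. Then I would feed this very sequence into Lemma \ref{gs:lemma}: since $p$ is generically stable over $M_0$ and $(c_i)_{i \in \omega}$ is a sequence in $M_0^{x}$ realizing $\EEM((c_i)_{i \in \omega}/M_0) = p^{\omega}|_{M_0}$, the hypotheses of Lemma \ref{gs:lemma} are met, and we conclude $\lim_{i \to \infty} \tp(c_i/\mathcal{U}) = p$ in $S_x(\mathcal{U})$. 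This is exactly the defining condition for $p$ to be sequentially approximated over $M_0$, hence over $M$ as well (the witnessing sequence lives in $M_0^{x} \subseteq M^{x}$).

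I expect the proof itself to be essentially a two-line invocation of the lemmas, so the only genuine subtlety is bookkeeping about which model the witnessing sequence lives in. The parenthetical ``(over $M$)'' in the statement should be read as: if $p$ is generically stable over $M$, then $p$ is sequentially approximated over $M$. The honest output of the construction is a sequence in $M_0^{x}$, and since $M_0 \prec M$ we have $M_0^{x} \subseteq M^{x}$, so the same sequence witnesses sequential approximability over $M$. I would note explicitly that Lemma \ref{gs:lemma}, unlike Lemma \ref{dfs:lemma}, does not require $M$ to be countable — it only needs generic stability over $M$ together with a sequence in $M^{x}$ of the prescribed $\EEM$-type — which is what lets us transfer the conclusion back up to the original (possibly uncountable) $M$ without losing anything. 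The real mathematical content has already been discharged in the two lemmas, so there is no remaining obstacle beyond this alignment of parameter sets.
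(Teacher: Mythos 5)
Your proposal is correct and follows exactly the paper's argument: pass to a countable $M_0 \prec M$ over which $p$ is generically stable (hence $\dfs$) via Fact \ref{gfs:facts}, apply Lemma \ref{dfs:lemma} to produce $(c_i)_{i\in\omega}$ in $M_0^{x}$ with $\EEM((c_i)_{i\in\omega}/M_0)=p^{\omega}|_{M_0}$, and conclude by Lemma \ref{gs:lemma}. The additional bookkeeping about the witnessing sequence living in $M_0^{x}\subseteq M^{x}$ is accurate and is left implicit in the paper.
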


\begin{proof} If $p$ is generically stable, then $p$ is generically stable over a countable submodel model $M_{0}$ contained in $M$ by Fact \ref{gfs:facts}. Then $p$ is $\dfs$ over $M_0$ and so by Lemma \ref{dfs:lemma}, one can choose $(c_i)_{i \in \omega}$ where each $c_i \in M_{0}^{x}$ and $\EEM((c_i)_{i \in \omega}/M_{0}) = p^{\omega}|_{M_0}$. By Lemma \ref{gs:lemma}, $\lim_{i \to \infty}\tp(c_i/\mathcal{U}) = p$.
\end{proof}

\begin{corollary} Assume that $T'$ is countable or uncountable in the language $\mathcal{L'}$, $\mathcal{U}' \models T'$, and $M'$ a submodel of $\mathcal{U}'$. Assume that $p$ is generically stable over $M'$. Then for any countable collection of formulas $\Delta = \{\psi_{i}(x,y_{i})\}_{i \in \omega}$ in $\mathcal{L'}$, there exists a sequence of points $(c_i)_{i \in \omega}$ each in $(M')^{x}$ such that $\lim_{i \to \infty} \tp_{\Delta}(c_i/\mathcal{U}) = p|_{\Delta}$.
\end{corollary}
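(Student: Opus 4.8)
The plan is to reduce to the countable case already settled by Theorem \ref{gstheorem}, by passing to a countable sublanguage $\mathcal{L}_0 \subseteq \mathcal{L}'$ in which $\Delta$ lives and in which $p$ remains generically stable over a \emph{countable} model. Since $p$ is generically stable over $M'$, it is in particular definable over $M'$ by $(i)$ of Fact \ref{gfs:facts}, so for each partitioned $\mathcal{L}'$-formula $\varphi(x,y)$ there is a defining formula $d_p\varphi(y) \in \mathcal{L}'(M')$. First I would build a countable $\mathcal{L}_0 \subseteq \mathcal{L}'$ containing every symbol occurring in $\Delta$ and \emph{closed under this defining schema}, in the sense that for every $\mathcal{L}_0$-formula $\varphi(x,y)$ the finitely many symbols of $d_p\varphi$ already lie in $\mathcal{L}_0$; this is achieved by the usual countable iteration, since at each stage there are only countably many $\mathcal{L}_0$-formulas. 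Collecting the countably many parameters appearing in the formulas $d_p\varphi$ into a countable set $A \subseteq M'$, I would choose a countable $\mathcal{L}_0$-elementary submodel $M_0' \prec M'{\restriction}\mathcal{L}_0$ with $A \subseteq M_0'$. Writing $p_0 := p{\restriction}\mathcal{L}_0 \in S_x(\mathcal{U}'{\restriction}\mathcal{L}_0)$ for the reduct type, the closure condition guarantees that each $d_p\varphi$ is an $\mathcal{L}_0(M_0')$-formula defining the $\varphi$-part of $p_0$, so $p_0$ is definable, hence invariant, over $M_0'$.

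The heart of the argument is that $p_0$ is generically stable over $M_0'$. Let $I = (a_i)_{i \in \omega}$ be a Morley sequence in $p$ over $M'$. Restricting types to $\mathcal{L}_0$ and parameters to $M_0' \subseteq M'$ shows that $I_0 := I{\restriction}\mathcal{L}_0$ is a Morley sequence in $p_0$ over $M_0'$; and since $p$ is generically stable over $M'$ we have $\lim_{i} \tp_{\mathcal{L}'}(a_i/\mathcal{U}') = p$, so the continuous restriction map $S_x(\mathcal{U}') \to S_x(\mathcal{U}'{\restriction}\mathcal{L}_0)$ yields $\lim_{i} \tp_{\mathcal{L}_0}(a_i/\mathcal{U}'{\restriction}\mathcal{L}_0) = p_0$. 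Thus \emph{some} Morley sequence of $p_0$ over $M_0'$ converges to $p_0$. To upgrade this to \emph{every} Morley sequence I would use the following homogeneity observation: if $q$ is invariant over a small model $N$ and some Morley sequence $(a_i)$ of $q$ over $N$ satisfies $\lim_i \tp(a_i/\mathcal{U}) = q$, then $q$ is generically stable over $N$. Indeed any other Morley sequence $(b_i)$ has the same type $q^{(\omega)}{\restriction}N$ over $N$, so there is $\sigma \in \Aut(\mathcal{U}/N)$ with $\sigma(a_i) = b_i$; for any $\psi(x,c)$ one computes $\mathbf{1}_\psi(b_i,c) = \mathbf{1}_\psi(a_i,\sigma^{-1}c)$, whence $\lim_i \mathbf{1}_\psi(b_i,c) = \mathbf{1}[\psi(x,\sigma^{-1}c) \in q] = \mathbf{1}[\psi(x,c) \in q]$, the last equality using that $\sigma q = q$ by $N$-invariance. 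Applying this with $q = p_0$ and $N = M_0'$ gives generic stability of $p_0$ over $M_0'$.

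Finally, $T_0' := \operatorname{Th}_{\mathcal{L}_0}(\mathcal{U}')$ is a countable theory whose monster model is $\mathcal{U}'{\restriction}\mathcal{L}_0$, and $p_0$ is generically stable over the countable model $M_0'$, so Theorem \ref{gstheorem} applies verbatim and produces $(c_i)_{i \in \omega}$ in $(M_0')^x$ with $\lim_i \tp_{\mathcal{L}_0}(c_i/\mathcal{U}'{\restriction}\mathcal{L}_0) = p_0$. Since $M_0' \subseteq M'$ and $\Delta \subseteq \mathcal{L}_0$, this sequence lies in $(M')^x$ and satisfies $\lim_i \tp_\Delta(c_i/\mathcal{U}') = p{\restriction}\Delta$, as required. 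I expect the main obstacle to be the passage to a \emph{countable} model: naively one loses finite satisfiability when shrinking $M'$ (the phenomenon flagged in the Remark after Proposition \ref{m:countable}), and it is precisely the homogeneity observation above—together with the fact that generic stability, unlike bare $\dfs$, is recovered from a single convergent Morley sequence and then feeds finite satisfiability back in through $(i)$ of Fact \ref{gfs:facts}—that circumvents this. A secondary point requiring care is that $p_0$ be invariant over the countable $M_0'$, which is why $\mathcal{L}_0$ must be closed under the defining schema rather than merely contain $\Delta$.
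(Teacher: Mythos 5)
Your proof is correct and follows essentially the same route as the paper: restrict to a countable sublanguage containing $\Delta$, check that generic stability survives the reduct, and invoke Theorem \ref{gstheorem}. The only difference is that the paper simply cites \cite[Remark 3.3]{CoGan} for preservation of generic stability under reducts and lets Theorem \ref{gstheorem} handle the descent to a countable model internally (via Fact \ref{gfs:facts}), whereas you prove both by hand through the defining-schema closure and the one-convergent-Morley-sequence-suffices homogeneity argument.
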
 

\begin{proof} Let $\mathcal{L}$ be a countable sublanguage of $\mathcal{L'}$ containing all the formulas in $\Delta$. The corresponding type $p|_{\mathcal{L}}$ is generically stable over the model $M$ where $M = M'|_{\mathcal{L}}$ (see \cite[Remark 3.3]{CoGan}). Hence we may apply Theorem \ref{gstheorem}.
\end{proof} 

\subsection{Examples and non-examples}

We begin this subsection by collecting the known examples of sequentially approximated types. We then go on to give two examples of types which are not sequentially approximated (over any model).

\begin{observation} Assume that $p \in S_{x}(\mathcal{U})$ and let $M$ be a small elementary submodel. Then, $p$ is sequentially approximated over $M$ if

\begin{enumerate}[($i$)]
\item $T$ is stable, and $p$ is invariant over $M$,
\item $T$ is NIP, $|M| = \aleph_0$, and $p$ is finitely satisfiable over $M$, or
\item $p$ is generically stable over $M$.
\end{enumerate}
\end{observation}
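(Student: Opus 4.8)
The plan is to prove the three clauses by reducing each to a statement already established in the excerpt, so that essentially no new argument is needed beyond one standard stability-theoretic reduction. Clause (iii) is literally Theorem \ref{gstheorem}: if $p$ is generically stable over $M$, then $p$ is sequentially approximated over $M$, so there is nothing further to do there. Clause (ii) is exactly the content of Theorem \ref{sim:conv} (Simon's lemma): under NIP, with $|M| = \aleph_0$ and $p$ finitely satisfiable over $M$, one obtains a sequence $(a_i)_{i \in \omega}$ in $M^{x}$ with $\lim_{i \to \infty} \tp(a_i/\mathcal{U}) = p$, which is precisely the definition of $p$ being sequentially approximated over $M$. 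Thus (ii) is immediate as well.

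The only clause with genuine (if modest) content is (i), and here the strategy is to upgrade mere $M$-invariance to generic stability over $M$ and then quote clause (iii). Concretely, I would argue that in a stable theory an $M$-invariant global type $p$ is automatically $\dfs$ over $M$. Since $M$ is a model, $M$-invariance of $p$ forces $p$ not to fork over $M$ (an $M$-indiscernible witness to dividing would, by invariance, be wholly contained in $p$, contradicting consistency), and conversely stationarity of types over models identifies $p$ with the unique nonforking extension of $p|_M$. By the classical theory of forking in stable theories, this nonforking extension is both definable over $M$ and finitely satisfiable over $M$ (it is simultaneously the heir and the coheir of $p|_M$), so $p$ is $\dfs$ over $M$. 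Because every stable theory is NIP, Fact \ref{gfs:facts}(v) then promotes ``$\dfs$ over $M$'' to ``generically stable over $M$,'' and clause (iii) applies verbatim.

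The single point deserving care — and thus the main obstacle, such as it is — is the assertion that an $M$-invariant type in a stable theory is $\dfs$ over $M$. This is standard, but I would state it as a clean lemma so the logic of the reduction is transparent: over a model $M$ in a stable theory, $M$-invariance, nonforking over $M$, definability over $M$, and finite satisfiability over $M$ all coincide for a global type. With that identification in place the rest of clause (i) is purely formal, and the observation follows by chaining it into Theorem \ref{gstheorem}.
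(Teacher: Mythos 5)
Your proposal is correct and follows the paper's own route exactly: the paper likewise disposes of (iii) by Theorem \ref{gstheorem}, of (ii) by Theorem \ref{sim:conv}, and of (i) by reducing to (iii), with your stable-theory lemma (invariance over a model implies $\dfs$, hence generic stability since stable implies NIP via Fact \ref{gfs:facts}) simply filling in the standard details the paper leaves implicit. No gaps.
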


We just proved $(iii)$. Clearly, $(i)$ follows from $(iii)$ (we remark that it also follows from $(ii)$). As noted previously, the proof of $(ii)$ is precisely \cite[Lemma 2.8]{Invariant}. 

We now exhibit some concrete examples of types which are not sequentially approximated. We begin by describing a type in an NIP theory which is finitely satisfiable over a small model but not sequentially approximated (and its associated Keisler measure is not sequentially approximated either). We then discuss a finitely approximated type which is not sequentially approximated.

\begin{proposition}\label{omega} Let $\omega_{1}$ be the first uncountable ordinal, $M = (\omega_{1};<)$ with the usual ordering, and let $T_{<}$ be the theory of $M$ in the language $\{<\}$.  Recall that $T_{<}$ is NIP. Let $p \in S_{x}(\omega_{1})$ be the complete type extending $\{\alpha < x: \alpha < \omega_1\}$. Let $\mathcal{U}$ be a monster model of $T_{<}$ such that $M \prec \mathcal{U}$ and let $p_* \in S_{x}(\mathcal{U})$ be the unique global coheir of $p$. Then, $p_{*}$ is not sequentially approximated over any model. 
\end{proposition}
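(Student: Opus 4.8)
The plan is to argue by contradiction and to exploit the order-theoretic structure of $T_<$, together with the characterization of sequential approximability as convergence of a sequence of realized types in the Stone space. Suppose toward a contradiction that $p_*$ is sequentially approximated over some model $N$, so there is a sequence $(a_i)_{i\in\omega}$ in $N^{x}$ with $\lim_{i\to\infty}\tp(a_i/\mathcal{U}) = p_*$ in $S_x(\mathcal{U})$. The key feature of $p$ is that it concentrates ``above all of $M$'': for every $\alpha<\omega_1$ the formula $\alpha < x$ lies in $p$, and since $p_*$ is the global coheir of $p$, the realized elements $a_i$ must be finitely satisfiable witnesses, so each $a_i$ sits in $N^x$ but the convergence forces $a_i$ to exceed every fixed parameter of $\mathcal{U}$ eventually.

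First I would record what convergence to $p_*$ means concretely. For each $\alpha<\omega_1$, the formula $\alpha<x$ belongs to $p_*$, so there is $N_\alpha$ with $\mathcal{U}\models \alpha < a_i$ for all $i>N_\alpha$. Dually, for every $c\in\mathcal{U}$ with $c$ realizing $p_*$-``below'' type data, the relevant cut conditions transfer to the tail of the sequence. The crucial point is to extract a countable amount of information that pins down a countable cofinal/coinitial structure: the sequence $(a_i)$ is countable, hence the set of parameters $\{a_i : i\in\omega\}$ together with any countable $M_0\prec M$ generates only countably many realized order-types, and the limit type $p_*|_{M_0 \cup \{a_i\}}$ is determined by this countable data.

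The heart of the argument, and the step I expect to be the main obstacle, is to derive a contradiction from the fact that $p$ ``looks past'' the uncountable cofinality of $\omega_1$ while a convergent sequence can only capture countable behavior. Concretely, consider the countable set $S=\{a_i:i\in\omega\}$ and let $\beta = \sup\{\gamma < \omega_1 : \gamma \text{ is realized as }\tp(a_i/M)\text{-data for some }i\}$; since $S$ is countable and $\omega_1$ has uncountable cofinality, we may find $\alpha_0<\omega_1$ strictly above everything ``named'' by the tail behavior of the sequence relative to $M$. I would then build a formula $\varphi(x,b)$ with $b\in\mathcal{U}^y$ coding a cut that $p_*$ decides one way (forced by its definition as the coheir of the ``top'' type $p$), but such that the values $\mathbf{1}_{\varphi(x,b)}(a_i)$ oscillate or stabilize to the wrong value along the tail, contradicting $\lim_{i\to\infty}\tp(a_i/\mathcal{U})=p_*$.

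The cleanest way to produce the oscillation is to use the coheir/finite-satisfiability structure: since $p_*$ is a coheir of $p$ and $p$ is the type at the top of $\omega_1$, any convergent sequence of realized types approximating $p_*$ would, by restricting to a countable submodel, yield a convergent sequence approximating the analogous ``top'' type over a countable initial segment; but over $\omega_1$ the top cut cannot be reached by any countable increasing sequence because $\operatorname{cf}(\omega_1)=\omega_1>\omega$. I would therefore locate $\alpha^* < \omega_1$ above the supremum of all ordinals that appear in the (countably many) cut-data of the $a_i$'s, and observe that the formula $x < \alpha^*$ or its negation must eventually hold along the sequence in a way inconsistent with $p_*$ placing $x$ above $\alpha^*$. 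Balancing the two modes of information — what $p_*$ forces from above and what the countable sequence can supply from below — yields the contradiction, completing the proof that $p_*$ is not sequentially approximated over any model.
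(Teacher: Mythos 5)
There is a genuine gap. Your proposal is built entirely around one idea --- that a countable sequence cannot be cofinal in $\omega_{1}$ --- but that idea only disposes of \emph{one} of the two ways an approximating sequence can approach the cut defined by $p_{*}$. Since $\mathcal{L}=\{<\}$ and the $a_i$ lie in a linear order, one first passes to a monotone subsequence. If the subsequence is strictly increasing, your cofinality argument is essentially the paper's: each $a_i$ satisfies $a_i < x \in p_{*}$, finite satisfiability of $p_{*}$ in $\omega_1$ gives some $\alpha\in\omega_1$ with $a_i<\alpha$, well-ordering gives a least such $\alpha_i$, and $\beta=\sup_i\alpha_i<\omega_1$ yields $x<\beta\in p_{*}$, contradicting $\beta<x\in p$. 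But the model $N$ may contain elements lying \emph{above every element of} $\omega_1$, and then the approximating sequence can be strictly decreasing toward the cut from above; note that $p_{*}$, being the coheir, contains $x<c$ for every $c>\omega_1$, so such a sequence is a priori a plausible approximant. In that case there is no ``least $\alpha\in\omega_1$ above $a_i$'' at all, your $\beta$ and $\alpha^{*}$ are undefined, and uncountable cofinality gives you nothing. The paper handles this case differently: by compactness one realizes $\Theta(x)=\{\alpha<x:\alpha\in\omega_1\}\cup\{x<a_i:i\in\omega\}$ by some $c_{\infty}$, finite satisfiability forces $x<c_{\infty}\in p_{*}$, yet $a_i>c_{\infty}$ for all $i$ forces $x>c_{\infty}$ into the limit type --- contradiction. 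Your sketch gestures at ``coinitial structure'' but never produces this witness, and without it the proof does not close.

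Two smaller points. The assertion that convergence forces the $a_i$ to eventually exceed \emph{every} parameter of $\mathcal{U}$ is false: $p_{*}$ contains $x<c$ for any $c$ above $\omega_1$, so the $a_i$ must eventually lie \emph{below} such $c$. And the central step of your plan (``I would then build a formula $\varphi(x,b)$ \ldots such that the values oscillate or stabilize to the wrong value'') is left as a placeholder; in this theory the only formulas available are order comparisons, so the construction has to be carried out explicitly as above rather than asserted.
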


\begin{proof} Assume for the sake of contradiction that $p_{*}$ is sequentially approximated over some model $N$. Then there exists a sequence of points $(b_i)_{i \in \omega}$ in $N$ such that $\lim_{i \to \infty} \tp(b_i/\mathcal{U}) =  p_{*}$ in $S_{x}(\mathcal{U})$. There is either an infinite subsequence which is strictly increasing or strictly decreasing and so without loss of generality, $(b_i)_{i \in \omega}$ has one of these two properties. First assume that  $(b_i)_{i \in \omega}$ is strictly increasing. Notice that $b_i < x \in p_{*}$. Since $p_{*}$ is a coheir of $p$, $p_*$ is finitely satisfiable over $\omega_1$. So, for each $b_i$ there exists $\alpha$ in $\omega_{1}$ such that $b_{i} < \alpha$. Now, for each $b_i$, we define  
$\alpha_i := \min \{\alpha \in \omega_1: \mathcal{U} \models b_i<\alpha \}$. Since $\omega_1$ is well-ordered, $\alpha_i$ is well-defined. We let $\beta$ be the supremum (in $\omega_{1}$) of $\{\alpha_{i}: i \in \omega\}$. Then $\mathcal{U} \models b_i < \beta$ for each $i \in \omega$, and so  but $x < \beta \in p_{*}$, contradiction. 

Now we assume that $(b_i)_{i \in \omega}$ is a strictly decreasing subsequence. Notice that for each $i \in \omega$, $b_i>x \in p_{*}$. Let $\Theta(x) = \{\alpha < x: \alpha \in \omega_1\}\ \cup \{x < b_i: i \in \omega\}$. By compactness, choose $c_{\infty}$ in $\mathcal{U}$ satisfying $\Theta(x)$. Since $p_{*}$ is finitely satisfiable over $\omega_1$, we have $c_{\infty} > x \in p_{*}$. But since $\mathcal{U} \models b_i > c_{\infty}$ for each $i \in \omega$, we have that $x >c_{\infty} \in p$, contradiction. 
\end{proof}

\begin{remark}\label{example:coheir1} The type $p_{*}$ in Proposition \ref{omega} is finitely satisfiable over a small model, but not finitely satisfiable over any countable submodel by Theorem \ref{sim:conv}. 
\end{remark} 

\begin{proposition} Let $p_{*}$ be as in Proposition \ref{omega}. Then the associated Keisler measure $\delta_{p_{*}}$ is not sequentially approximated. 
\end{proposition}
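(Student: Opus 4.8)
The plan is to reduce this statement directly to Proposition \ref{omega} by invoking the NIP equivalence between sequential approximability of a type and of its associated Dirac measure. Recall that $T_{<}$ is NIP, so statement $(iv)$ of Proposition \ref{finitesat} applies: for every small model $N \prec \mathcal{U}$, the type $p_{*}$ is sequentially approximated over $N$ if and only if $\delta_{p_{*}}$ is sequentially approximated over $N$. This is exactly the bridge that lets us transport the type-level non-approximability proved in Proposition \ref{omega} to the measure $\delta_{p_{*}}$.

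First I would argue by contradiction. Suppose $\delta_{p_{*}}$ is sequentially approximated; then by definition it is sequentially approximated over some small model $N \prec \mathcal{U}$. Applying the equivalence in $(iv)$ of Proposition \ref{finitesat} with $M = N$, we conclude that $p_{*}$ is sequentially approximated over $N$. But Proposition \ref{omega} asserts precisely that $p_{*}$ is not sequentially approximated over any model, a contradiction. Hence $\delta_{p_{*}}$ is not sequentially approximated over any model, which is the claim.

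There is essentially no hard step here: the genuine content lives entirely in Proposition \ref{omega} (the type-level non-approximability) and in the nontrivial converse direction of Proposition \ref{finitesat}$(iv)$, which itself packages Simon's Theorem \ref{sim:conv}. The only point worth double-checking is that this converse direction truly applies, namely that sequential approximability of $\delta_{p_{*}}$ over $N$ forces finite satisfiability over a \emph{countable} submodel (via part $(i)$ of Proposition \ref{finitesat}), after which Theorem \ref{sim:conv} transfers approximability back to the type. This is coherent with Remark \ref{example:coheir1}, which records that $p_{*}$ is finitely satisfiable over a small but over no countable submodel; indeed, the same reasoning shows that sequential approximability of $\delta_{p_{*}}$ would already contradict that fact, giving an alternative route to the contradiction.
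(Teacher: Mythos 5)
Your proposal is correct and is exactly the paper's argument: the paper's proof is simply ``Clear from $(iv)$ of Proposition \ref{finitesat},'' i.e., the NIP equivalence between sequential approximability of $p_{*}$ and of $\delta_{p_{*}}$, combined with Proposition \ref{omega}. Your additional remarks on why the converse direction of $(iv)$ applies are accurate but not needed beyond citing that statement.
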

\begin{proof} Clear from $(iv)$ of Proposition \ref{finitesat}. 
\end{proof} 

\begin{proposition}\label{Gabe} Let $T^{2}_{s}$ be the theory of the random $K_{s}$-free graph in the language $\mathcal{L} = \{E(x,y)\}$. Let $p_{*}$ be the unique global complete type extending the formulas $\{ \neg E(x,b): b \in \mathcal{U}\}$. Then, $\delta_{p_{*}}$ is sequentially approximated (even finitely approximated over any submodel) but $p_{*}$ is not sequentially approximated. Moreover, $T^{2}_{s}$ admits no (non-realized) sequentially approximated types.
\end{proposition}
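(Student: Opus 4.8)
The proposition has three parts: that $\delta_{p_*}$ is finitely approximated over every $M \prec \mathcal{U}$, that $p_*$ itself is not sequentially approximated, and that in fact no non-realized type over $\mathcal{U}$ is sequentially approximated. My plan is to prove the last and strongest statement directly and recover the middle one as the special case $p = p_*$ (note that $p_*$ is non-realized, since by the extension axioms every vertex of $\mathcal{U}$ has a neighbour, so no vertex is isolated). The finite-approximation statement is logically independent of the other two and rests on a Ramsey-theoretic input, which I expect to be the main obstacle.

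For finite approximation, fix $M \prec \mathcal{U}$, a partitioned formula $\varphi(x,y)$ with $y = (y_1,\dots,y_m)$, and $\epsilon > 0$. The key observation is that, by quantifier elimination, for any finite $K_s$-free set $A = \{a_1,\dots,a_n\} \subseteq \mathcal{U}$ and any $b \in \mathcal{U}^{y}$, the neighbourhood $\{a_i : \mathcal{U} \models E(a_i,b_j)\}$ of each $b_j$ inside $A$ must be $K_{s-1}$-free, since otherwise $b_j$ together with a $K_{s-1}$ would form a forbidden $K_s$. Hence if $A$ is chosen so that every $K_{s-1}$-free induced subgraph of $A$ has at most $\eta n$ vertices, then each $b_j$ is adjacent to at most $\eta n$ of the $a_i$, so at most $m(\eta n + 1)$ of the $a_i$ are adjacent or equal to some $b_j$. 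For each of the remaining (\emph{good}) $a_i$ the quantifier-free type of $(a_i,b)$ is fixed, namely $a_i$ is non-adjacent to and distinct from every $b_j$, and this is exactly the pattern forced by $p_*$, so $\mathcal{U} \models \varphi(a_i,b) \iff \varphi(x,b) \in p_*$. Consequently $|\Av(a_1,\dots,a_n)(\varphi(x,b)) - \delta_{p_*}(\varphi(x,b))| \le m\eta + m/n$ uniformly in $b$, and choosing $\eta$ small and $n$ large finishes the argument. Producing such an $A$ inside $M$ (which realizes every finite $K_s$-free graph) requires finite $K_s$-free graphs whose largest $K_{s-1}$-free induced subgraph is sublinear in the number of vertices; this is precisely the Erd\H{o}s--Rogers bound $f_{s-1,s}(n) = o(n)$, and for $s = 3$ it is the existence of triangle-free graphs with sublinear independence number. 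This combinatorial fact is the one genuinely nontrivial ingredient.

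For non-approximability, suppose toward a contradiction that a non-realized $p \in S_{x}(\mathcal{U})$ is sequentially approximated over some $M$. By projecting to coordinates (part $(v)$ of Proposition \ref{finitesat}) and noting that a tuple type over $\mathcal{U}$ is realized once all of its one-variable projections are, I may assume $x$ is a single variable. Fix $(a_i)_{i \in \omega}$ in $M^{x}$ with $\lim_i \tp(a_i/\mathcal{U}) = p$. Since $p$ is non-realized, no value repeats infinitely often, so after passing to a subsequence the $a_i$ are pairwise distinct; by Ramsey's theorem together with $K_s$-freeness, which forbids an infinite clique, I may pass to a further subsequence that is an independent set, still converging to $p$. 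Now $\{a_i : i \text{ odd}\}$ is independent, hence $K_{s-1}$-free, so the extension axioms and compactness produce $b \in \mathcal{U}$ with $\mathcal{U} \models E(b,a_i)$ if and only if $i$ is odd. Then $\mathbf{1}_{E(x,b)}(a_i)$ fails to converge, contradicting $\lim_i \tp(a_i/\mathcal{U}) = p$. This shows that no non-realized type is sequentially approximated, and in particular $p_*$ is not; combined with the finite approximation of $\delta_{p_*}$ this exhibits the promised gap between a type and its associated Keisler measure.
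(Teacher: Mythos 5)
Your proof is correct, and the second half (no non-realized type is sequentially approximated) is essentially identical to the paper's argument: reduce to a single variable via the projection clause of Proposition \ref{finitesat}, pass to pairwise distinct points, apply Ramsey plus $K_s$-freeness to extract an independent subsequence, and use the extension axioms to find a parameter $b$ whose edge relation alternates along that subsequence, killing convergence. (Your observation that a tuple type over $\mathcal{U}$ is realized as soon as all its one-variable projections are is the right justification for the reduction, and your remark that $p_*$ is non-realized because $\mathcal{U}$ has no isolated vertices is correct.) The one place you diverge is the finite approximability of $\delta_{p_*}$: the paper simply cites \cite[Theorem 5.8]{CoGan} for this, whereas you sketch a direct proof. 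Your sketch is the right argument --- by quantifier elimination the truth value of $\varphi(a_i,b)$ for a ``good'' $a_i$ (distinct from and non-adjacent to every coordinate of $b$) agrees with $p_*$, the neighbourhood of each $b_j$ inside a $K_s$-free set must be $K_{s-1}$-free, and the whole thing reduces to the Erd\H{o}s--Rogers bound $f_{s-1,s}(n)=o(n)$ together with the fact that any model $M$ embeds every finite $K_s$-free graph. This is essentially the content of the cited result, so you have traded a citation for a self-contained (modulo the combinatorial input, which you correctly flag as the genuinely nontrivial ingredient) argument; nothing is gained or lost logically, but your version makes visible exactly where the combinatorics enters.
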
 

\begin{proof} The proof that $\delta_{p_*}$ is finitely approximated can be found in \cite[Theorem 5.8]{CoGan}. By $(iii)$ of Proposition \ref{finitesat}, $\delta_{p_*}$ is sequentially approximated. By $(v)$ of Proposition \ref{finitesat}, it suffices to show that there are no non-realized types in one variable which are sequentially approximated. Let $p$ be any non-realized type in $S_{1}(\mathcal{U})$ and assume that $(b_i)_{i \in \omega}$ is a sequence of points in $\mathcal{U}^{x}$ such that $\lim_{i\to \infty}\tp(b_i/\mathcal{U}) = p$. Since $p$ is non-realized, we may assume that the points in $(b_i)_{i \in \omega}$ are distinct.
Then, by Ramsey's theorem, there is a subsequence which is either independent or complete. It cannot be complete, because that would violate $K_{s}$-freeness. Therefore, $(b_i)_{i \in \omega}$ contains an independent subsequence, call it $(c_i)_{i \in \omega}$. By compactness, there exists an $a$ in $\mathcal{U}$ such that $\mathcal{U} \models E(c_i,a)$ if and only if $i$ is even. Then, $(\tp(c_i/\mathcal{U}))_{i \in \omega}$ does not converge in $S_{x}(\mathcal{U})$ and so $(\tp(b_i/\mathcal{U}))_{i \in \omega}$ does not converge in $S_{x}(\mathcal{U})$.
\end{proof}

\begin{question} We say a global type $p$ in $S_{x}(\mathcal{U})$ is \textbf{sad}\footnote{Credit to James Hanson for the terminology.} if it is both \textbf{s}equentially \textbf{a}pproximated and \textbf{d}efinable. Does there exist a global type $p$ which is sad over a model $M$ but is not generically stable over $M$? It is clear that if $T$ is NIP, then all sad types are generically stable. Therefore an example of such a type must come from \textit{the wild}.
\end{question} 

\section{Sequential approximations of measures in NIP theories} 
Throughout this section, we assume that $T$ is a countable NIP theory and $\mathcal{U}$ is a monster model of $T$. We show that measures which are finitely satisfiable over a countable model of $T$ are sequentially approximated (Theorem T2). To do this, we introduce the notion of a \textit{smooth sequence}. These are sequences of global measures which are intended to play the role of a Morley sequence for a measure. Unfortunately, these sequences only exist (a priori) in the NIP context and it is currently not known how to expand this idea to IP theories. At the end of this section, we give a characterization of generic stability using smooth sequences (again, only in the NIP context). 

To motivate the machinery introduced in this section, we explain why Theorem T2 does not follow directly from some approximation results currently in the literature. One might assume that one could prove Theorem T2 from Theorem \ref{sim:conv} in tandem with the following fact \cite[Proposition 7.11]{Sibook},

\begin{fact}[T is NIP]\label{type:approx} Suppose that $\mu \in \mathfrak{M}_{x}(\mathcal{U})$ and $\mu$ is finitely satisfiable over $M$. Then, for any formula $\varphi(x,y) \in \mathcal{L}$ and every $\epsilon > 0$, there exists types $p_1,...,p_n \in S_{x}(\mathcal{U})$, where for each $i \leq n$ the type $p_i$ is finitely satisfiable over $M$, and 
\begin{equation*}
\sup_{b \in \mathcal{U}^{y}}| \mu(\varphi(x,b)) - \Av(\overline{p})(\varphi(x,b))|< \epsilon. 
\end{equation*} 
\end{fact}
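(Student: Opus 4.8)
The plan is to realize $\mu$ as a regular Borel probability measure $\tilde\mu$ on the Stone space $S_{x}(\mathcal{U})$ via the Riesz representation theorem, and then to invoke the Vapnik--Chervonenkis theorem (the uniform law of large numbers for set systems of finite VC dimension). Concretely, I would work with the family of clopen sets
\begin{equation*}
\mathcal{C} = \{\, [\varphi(x,b)] : b \in \mathcal{U}^{y} \,\} \subseteq S_{x}(\mathcal{U}),
\end{equation*}
where $[\varphi(x,b)] = \{p \in S_{x}(\mathcal{U}) : \varphi(x,b) \in p\}$ and $\tilde\mu([\varphi(x,b)]) = \mu(\varphi(x,b))$. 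Since $T$ is NIP, the formula $\varphi(x,y)$ has finite VC dimension, which is exactly the (finite) VC dimension of the dual set system $\mathcal{C}$. The VC theorem then furnishes, for the given $\epsilon$, a natural number $n$ with the property that if $p_1,\dots,p_n$ are drawn independently according to $\tilde\mu$, then with probability close to $1$ one has
\begin{equation*}
\sup_{b \in \mathcal{U}^{y}} \Big| \mu(\varphi(x,b)) - \tfrac{1}{n}\big|\{\, i \le n : \varphi(x,b) \in p_i \,\}\big| \Big| < \epsilon,
\end{equation*}
which is precisely the desired inequality $\sup_{b} |\mu(\varphi(x,b)) - \Av(\overline{p})(\varphi(x,b))| < \epsilon$.

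The remaining point is to guarantee that the sampled types $p_i$ can be taken finitely satisfiable over $M$. For this I would show that $\tilde\mu$ concentrates on the closed set
\begin{equation*}
F = \{\, p \in S_{x}(\mathcal{U}) : \text{every formula in } p \text{ is satisfiable in } M \,\}
\end{equation*}
of global types finitely satisfiable over $M$. Indeed, the complement of $F$ is the open set $\bigcup\{[\psi(x)] : \psi \text{ not realized in } M\}$, and finite satisfiability of $\mu$ over $M$ forces $\mu(\psi(x)) = 0$ for each such $\psi$; by inner regularity of $\tilde\mu$ every compact subset of this open set is covered by finitely many clopen sets of measure $0$, whence $\tilde\mu(F) = 1$. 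Consequently an i.i.d.\ sample from $\tilde\mu$ lands in $F$ almost surely, so the positive-probability event supplied by the VC theorem can be intersected with the full-measure event $\{p_1,\dots,p_n \in F\}$; any point of this intersection yields types $p_1,\dots,p_n$ that are finitely satisfiable over $M$ and satisfy the required uniform bound.

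The main technical obstacle is the measurability requirement underlying the probabilistic form of the VC theorem: one needs the map $(p_1,\dots,p_n) \mapsto \sup_{b} |\mu(\varphi(x,b)) - \tfrac{1}{n}|\{i : \varphi(x,b)\in p_i\}||$ to be measurable for the product measure $\tilde\mu^{\otimes n}$, and this is not automatic for a supremum over the uncountable index set $\mathcal{U}^{y}$. This is handled by the definability of the family $\mathcal{C}$ (it is ``measurable'' in the sense required by the VC theorem), and I would either verify this directly or, more cleanly, quote a measurable version of the theorem and otherwise treat the law-of-large-numbers estimate as a black box. I should also note that one genuinely cannot hope to replace the $p_i$ by points of $M^{x}$ in general, since uniform-in-$b$ approximation by realized types is the content of finite approximability, which requires definability in addition to finite satisfiability; hence the statement correctly asks only for types finitely satisfiable over $M$, which is exactly what the support of $\tilde\mu$ provides. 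Apart from the measurability bookkeeping, the argument is a clean transfer of the VC theorem to the Stone space.
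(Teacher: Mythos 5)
Your proposal is correct and is essentially the standard argument: the paper states this as a Fact citing \cite[Proposition 7.11]{Sibook}, and the proof there is exactly your transfer of the VC theorem to the regular Borel measure $\tilde\mu$ on $S_{x}(\mathcal{U})$, with the types taken from the (full-measure) set of types finitely satisfiable over $M$. Your handling of the two delicate points --- concentration of $\tilde\mu$ on that set via inner regularity, and the measurability caveat in the probabilistic VC theorem (which can also be sidestepped by applying the finite VC estimate to arbitrary finite sets of parameters and then using compactness of $S_{x}(\mathcal{U})^{n}$) --- matches the cited treatment.
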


If $\mu$ is in $\mathfrak{M}_{x}(\mathcal{U})$ and is finitely satisfiable over a countable model $M$, then one can use Theorem \ref{sim:conv} and Fact \ref{type:approx} together to produce: 
\begin{enumerate}
\item  a sequence of global measures $(\Av(\overline{p}_{i}))_{i \in \mathbb{N}}$ such that each $\overline{p}_{i} = (p_{i_1},...,p_{i_{k}})$, each $p_{i_{k}} \in S_{x}(\mathcal{U})$ is finitely satisfiable over $M$, and $\lim_{i \to \infty} \Av(\overline{p}_i) = \mu$ in $\mathfrak{M}_{x}(\mathcal{U})$,
\item for each $i \in \mathbb{N}$, a sequence of points $(\overline{a}_{i_j})_{j \in \mathbb{N}}$ each in $(M^{x})^{<\omega}$ so that $\lim_{j \to \infty} \Av(\overline{a}_{i_j})= \Av(\overline{p}_{i})$.
\end{enumerate}
This construction gives an \textit{array} of points $(\overline{a}_{i_j})_{(i,j) \in \mathbb{N} \times \mathbb{N}}$ in $(M^{x})^{< \omega}$ so that 
\begin{equation*}\lim_{i \to \infty} \lim_{j \to \infty} \Big(\Av(\overline{a}_{i_j})\Big) = \mu \text{ in $\mathfrak{M}_{x}(\mathcal{U})$}.
\end{equation*}
A priori, the convergence of an array \textit{does not imply} that there exists a subsequence of that array which converges to the array's limit\footnote{For example, any Baire-2 function which is not Baire-1 can be written as the limit of an array of continuous functions, but cannot be written as the sequential limit of continuous functions.}. A similar situation arises by trying to iterate Theorem \ref{Khanaki}. So, we must work slightly harder. As previously stated, our proof essentially mimics the proof of Theorem \ref{sim:conv} but with Morley sequences replaced by \textit{smooth sequences}. Finally we remark that if there were an \textit{elementary proof} using an array to show this result, then we would have a moderately simple proof that $\dfs$ measures are finitely approximated in NIP theories. In particular, this proof would bypass the implicit use of randomizations (i.e. $(i)$ of Fact \ref{HPSFact}). 

We formally begin this section by discussing a ``continuous" analogue of eventually indiscernible sequences.

\subsection{Eventually indiscernible sequences revisited}  

We fix some notation. Fix distinct tuples of variables $x$ and $x_0,...,x_n$ such that $|x| = |x_i|$ for $i \leq n$. If $\varphi(x_0,...,x_n)$ is a formula in $\mathcal{L}_{x_0,...,x_n}(\mathcal{U})$ and $\overline{a}_{0},...,\overline{a}_{n}$ is a finite sequence of elements where each $\overline{a}_i \in (\mathcal{U}^{x})^{<\omega}$ and $\overline{a}_{i} = (a_{i,0},...,a_{i,m_{i}})$ for $i \leq n$, then we write $\varphi_{c}(\overline{a}_{0},...,\overline{a}_{n})$ to mean,
\begin{equation*}
\bigotimes_{i=0}^{n}\Av(\overline{a}_i)_{x_{i}}(\varphi(x_0,...,x_n)).
\end{equation*} 
Notice that $\varphi_{c}(\overline{a}_0,...,\overline{a}_n)$ is a real number. We observe that by unpacking the definition of the product measure, our formula can be computed as follows: 
\begin{equation*}
\varphi_{c}(\overline{a}_{0},...,\overline{a}_{n})= \frac{1}{\prod_{i=0}^{n} (m_{i} + 1)} \sum_{j_{0} = 0}^{m_{0}}...\sum_{j_{n} = 0}^{m_{n}}\mathbf{1}_{\varphi}(a_{0,j_0},...,a_{n,j_{n}}).
\end{equation*} 

\begin{definition}\label{convex} Let $(\overline{a}_{i})_{i \in \omega}$ be a sequence of elements in $(\mathcal{U}^{x})^{<\omega}$ and let $A \subset \mathcal{U}$ be a collection of parameters. Then we say that the sequence $(\overline{a}_i)_{i \in \omega}$ is \textbf{eventually indiscernible over $A$} if for any formula $\varphi(x_0,...,x_n)$ in $\mathcal{L}_{(x_i)_{i\in \omega}}(A)$ and any $\epsilon > 0$, there exists $N_{\epsilon,\varphi}$ such that for any $n_{k}>...>n_{0}>N_{\epsilon,\varphi}$ and $m_{k}>....>m_{0}>N_{\epsilon,\varphi}$, 
\begin{equation*}
|\varphi_{c}(\overline{a}_{n_{0}},...,\overline{a}_{n_{k}})-\varphi_{c}(\overline{a}_{m_{0}},...,\overline{a}_{m_{k}})|<\epsilon.
\end{equation*}

\end{definition} 

\begin{proposition}\label{correct} Let $(\overline{a}_{i})_{i\in\omega}$ be a sequence of tuples in $(\mathcal{U}^{x})^{< \omega}$. If $A$ is a countable set of parameters, then there exists some subsequence $(\overline{c}_i)_{i \in \omega}$ of $(\overline{a})_{i \in \omega}$ such that $(\overline{c}_{i})_{i \in \omega}$ is eventually indiscernible over $A$. 
\end{proposition}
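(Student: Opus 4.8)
The plan is to mimic the standard construction of eventually indiscernible sequences (Fact \ref{eventual}), replacing the use of Ramsey's theorem for a $\{0,1\}$-valued coloring by a compactness/pigeonhole argument that handles the real-valued quantities $\varphi_c(\overline{a}_{i_0},\ldots,\overline{a}_{i_k})$. The key observation is that although $\varphi_c$ takes values in $[0,1]$ rather than in a finite set, we only need \emph{eventual} $\epsilon$-agreement rather than exact agreement, and $[0,1]$ can be covered by finitely many intervals of length $\epsilon$. This converts each real-valued coloring into a finite coloring, after which Ramsey's theorem applies exactly as in the classical case.

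First I would enumerate the countably many relevant data points. Since $A$ is countable and $\mathcal{L}$ is countable, the set $\mathcal{L}_{(x_i)_{i\in\omega}}(A) = \bigcup_k \mathcal{L}_{x_0,\ldots,x_k}(A)$ is countable; enumerate it as $(\varphi_n)_{n\in\omega}$, where $\varphi_n$ involves variables $x_0,\ldots,x_{k_n}$. I would also fix a sequence $\epsilon_n \to 0$ (e.g. $\epsilon_n = 1/n$) and enumerate the countably many pairs $(\varphi_n,\epsilon_m)$ into a single list $(\psi_\ell,\delta_\ell)_{\ell\in\omega}$. The plan is then a diagonalization: construct a nested chain of subsequences $(\overline{a}_i) = S_0 \supseteq S_1 \supseteq S_2 \supseteq \cdots$, where $S_\ell$ is obtained from $S_{\ell-1}$ by applying Ramsey's theorem to a finite coloring associated to $(\psi_\ell,\delta_\ell)$, and finally take the diagonal sequence $(\overline{c}_i)_{i\in\omega}$ whose $i$-th term is the $i$-th element of $S_i$.

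The central step is the Ramsey application. Given $\psi_\ell = \psi(x_0,\ldots,x_k)$ and $\delta_\ell > 0$, partition $[0,1]$ into finitely many half-open intervals $I_1,\ldots,I_r$ each of length less than $\delta_\ell/2$, and color each increasing $(k{+}1)$-tuple of indices $i_0 < \cdots < i_k$ (from the current subsequence $S_{\ell-1}$) by the index $t$ such that $\psi_c(\overline{a}_{i_0},\ldots,\overline{a}_{i_k}) \in I_t$. This is a coloring of $[\omega]^{k+1}$ with finitely many colors, so Ramsey's theorem yields an infinite monochromatic subsequence $S_\ell$. On $S_\ell$, any two increasing $(k{+}1)$-tuples produce values of $\psi_c$ lying in the same interval of length $<\delta_\ell/2$, hence differing by less than $\delta_\ell$. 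Passing to the diagonal $(\overline{c}_i)$ guarantees that, for each fixed $(\psi_\ell,\delta_\ell)$, all but finitely many terms of $(\overline{c}_i)$ lie in $S_\ell$; taking $N_{\delta_\ell,\psi_\ell}$ to be the threshold beyond which this holds gives exactly the eventual-indiscernibility inequality in Definition \ref{convex}.

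The main obstacle, and the only genuinely new ingredient beyond the classical proof, is verifying that the real-valued quantity $\varphi_c(\overline{a}_{i_0},\ldots,\overline{a}_{i_k})$ depends only on the increasing tuple of indices in a way compatible with coloring $[\omega]^{k+1}$ — that is, that it is well-defined as a function of an unordered increasing index set and that finite $\epsilon$-discretization of $[0,1]$ legitimately reduces the problem to ordinary (finite-color) Ramsey theory. This is where the argument genuinely differs from the type case of Fact \ref{eventual}: there the coloring was already finite (a formula holds or fails), whereas here we must absorb the continuum of possible averaging values into finitely many buckets, accepting $\epsilon$-agreement in place of exact agreement. Once this discretization is in place, the diagonalization and indexing bookkeeping are routine, so I would keep that part brief.
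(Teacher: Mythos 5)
Your proposal is correct and follows essentially the same route as the paper: enumerate the countably many (formula, tolerance) pairs, discretize $[0,1]$ into finitely many buckets so that finite-color Ramsey applies to the increasing-tuple coloring given by $\varphi_c$, pass to a monochromatic infinite subsequence at each stage, and diagonalize. The paper implements the discretization as $r(\{\overline{a}_{i_0},\dots,\overline{a}_{i_k}\}) = \lfloor n\cdot\varphi_c(\overline{a}_{i_0},\dots,\overline{a}_{i_k})\rfloor$ rather than via explicit $\delta/2$-intervals, but this is the identical idea.
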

\begin{proof} This proof is a standard application of Ramsey's theorem applied to the ``continuous" setting. Enumerate all pairs in $\mathcal{L}_{(x_i)_{i \in \omega}}(A) \times \mathbb{N}_{>0}$. Let $(\overline{a}_{i}^{0})_{i\in\omega} :=(\overline{a}_{i})_{i\in\omega}$ and set $B_{0} = \{\overline{a}^{0}_{i}: i \in \omega\}$. Now, assume we have constructed the subsequence $(\overline{a}_{i}^{l})_{i\in\omega}$ and $B_{l}$ (where $B_{l} = \{\overline{a}_{i}^{l}: i \in \omega\}$). We now construct $(\overline{a}_{i}^{l+1})_{i\in\omega}$ and $B_{l+1}$. Assume that $(\varphi(x_{0},...,x_{k}),n)$ is the $l+1$ indexed pair in our enumeration. Then we define the coloring $r_{l+1}:[B_{l}]^{k+1}\to\{0,...,n\}$ via 
\begin{equation*}
r(\{\overline{a}^{l}_{i_{0}},...,\overline{a}^{l}_{i_{k}}\}) = \lfloor n \cdot \varphi_{c}(\overline{a}^{l}_{i_{0}},...,\overline{a}^{l}_{i_{k}})   \rfloor.
\end{equation*}
where $i_0 < i_1 < ...< i_k$. 
By Ramsey's theorem, there is an infinite monochromatic subset $B_{l}'$ of $B_{l}$. Let $(\overline{a}_{i}^{l+1})_{i\in\omega}$ be the obvious reindexed subsequence of $(\overline{a}_{i}^{l})_{i\in\omega}$ with the elements only from the monochromatic set $B_{l}^{'}$. We let $B_{l + 1} = \{\overline{a}^{l+1}_{i}: i \in \omega\}$. By construction, the sequence $(\overline{a}_{i}^{i})_{i\in\omega}$ is eventually indiscernible.
\end{proof} 

We now present a collection of facts which will help us prove that the associated average measures along eventually indiscernible sequences always converge to a measure in $\mathfrak{M}_{x}(\mathcal{U})$ when the underlying theory is NIP. The first fact is elementary and left to the reader as an exercise. 

\begin{fact} Assume that $(\mu_{i})_{i \in \omega}$ is a sequence of Keisler measures in $\mathfrak{M}_{x}(\mathcal{U})$. If for every formula $\varphi(x) \in \mathcal{L}_{x}(\mathcal{U})$, $\lim_{i \to \infty} \mu_{i}(\varphi(x))$ converges, then $(\mu_{i})_{i \in \omega}$ converges to a measure in $\mathfrak{M}_{x}(\mathcal{U})$. 
\end{fact}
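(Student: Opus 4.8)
The plan is to define the limit measure pointwise on formulas and then check that it lies in $\mathfrak{M}_{x}(\mathcal{U})$ and that the given sequence converges to it. Concretely, for each $\varphi(x) \in \mathcal{L}_{x}(\mathcal{U})$ I would set
\begin{equation*}
\mu(\varphi(x)) := \lim_{i \to \infty} \mu_{i}(\varphi(x)),
\end{equation*}
which is well-defined precisely by the hypothesis that every such limit converges. Since each $\mu_{i}(\varphi(x)) \in [0,1]$ and $[0,1]$ is closed, we get $\mu(\varphi(x)) \in [0,1]$. Note also that $\mu$ respects logical equivalence automatically: if $\varphi(x)$ and $\psi(x)$ define the same set, then $\mu_{i}(\varphi(x)) = \mu_{i}(\psi(x))$ for all $i$, so the two limits agree.

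Next I would verify that $\mu$ is a finitely additive probability measure, i.e. $\mu \in \mathfrak{M}_{x}(\mathcal{U})$. Taking $\varphi(x)$ to be $x = x$ gives $\mu(x=x) = \lim_{i} 1 = 1$, so $\mu$ is normalized. For finite additivity, suppose $\varphi(x)$ and $\psi(x)$ satisfy $\mathcal{U} \models \neg \exists x(\varphi(x) \wedge \psi(x))$. Then for each $i$ we have $\mu_{i}(\varphi(x) \vee \psi(x)) = \mu_{i}(\varphi(x)) + \mu_{i}(\psi(x))$ because each $\mu_{i}$ is itself a Keisler measure. Since all three limits exist and the limit of a sum of convergent real sequences is the sum of the limits, passing to the limit yields $\mu(\varphi(x) \vee \psi(x)) = \mu(\varphi(x)) + \mu(\psi(x))$. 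This is the only axiom that requires an argument, and it goes through solely because addition is a finite operation and hence commutes with limits of sequences in $\mathbb{R}$. Thus $\mu$ is a genuine element of $\mathfrak{M}_{x}(\mathcal{U})$.

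Finally I would observe that $\lim_{i \to \infty} \mu_{i} = \mu$ in $\mathfrak{M}_{x}(\mathcal{U})$ is then immediate. Recall from the preliminaries that the topology on $\mathfrak{M}_{x}(\mathcal{U})$ is the subspace topology inherited from $[0,1]^{\mathcal{L}_{x}(\mathcal{U})}$, i.e. the topology of pointwise convergence on formulas; convergence $\mu_{i} \to \mu$ means exactly that $\mu_{i}(\varphi(x)) \to \mu(\varphi(x))$ for every $\varphi(x) \in \mathcal{L}_{x}(\mathcal{U})$, which holds by the very definition of $\mu$. There is no genuine obstacle here: the entire content of the fact is that a pointwise limit of finitely additive probability measures is again finitely additive, and this survives because additivity is only ever tested on finite disjoint unions, where interchanging limit and sum is trivial. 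Crucially, we neither claim nor need any countable additivity, so no measure-theoretic convergence issues arise.
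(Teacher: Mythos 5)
Your proof is correct and is exactly the intended argument: the paper explicitly leaves this fact as an elementary exercise (it states the topology on $\mathfrak{M}_{x}(\mathcal{U})$ is the one induced from $[0,1]^{\mathcal{L}_{x}(\mathcal{U})}$, so one only needs to check that the pointwise limit is again a normalized, finitely additive measure, which is what you do). No gaps.
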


The next collection of facts can be found in \cite{HPS}. In particular, $(i)$ follows immediately from Lemma 2.10 while $(ii)$ and $(iii)$ are from Corollary 2.14. The proof of Lemma 2.10 is non-trivial and is an interpretation of results in \cite{Ben}. Implicitly, our proof uses the fact that the randomization of an NIP theory is NIP.

\begin{fact}[T is NIP]\label{HPSFact} Suppose that $\lambda \in \mathfrak{M}_{(x_i)_{i \in \omega}}$ where $|x_i| =|x_j|$ for each $i,j < \omega$.  $\lambda$ is said to be \textbf{$M$-indiscernible} if for every increasing sequence of indices $i_0,...,i_n$ and any formula $\varphi(x_{i_0},...,x_{i_{n}})$ in 
$\mathcal{L}_{(x_i)_{i \in \omega}}(M)$, we have that
\begin{equation*}
\lambda(\varphi(x_{i_0},...,x_{i_n})) = \lambda(\varphi(x_{0},...,x_{n})).
\end{equation*} 
Let $\mu, \nu \in \mathfrak{M}_{x}(\mathcal{U})$ such that $\mu,\nu$ are invariant over $M$. The following statements are true.
\begin{enumerate}[($i$)]
    \item If $\lambda$ is $M$-indiscernible, then for any formula $\varphi(x,b) \in \mathcal{L}_{x}(\mathcal{U})$, we have that $\lim_{i \to \infty} \lambda(\varphi(x_i,b))$ exists. 
    \item The measures $\mu^{(\omega)}$ and $\nu^{(\omega)}$ are $M$-indiscernible. 
    \item If $\mu^{(\omega)}|_{M} = \nu^{(\omega)}|_{M}$, then $\mu = \nu$. 
\end{enumerate}
\end{fact}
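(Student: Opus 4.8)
The three assertions are ultimately facts about the randomization $T^{R}$ of $T$ in the sense of Ben Yaacov (\cite{Ben}): an $M$-indiscernible $\lambda$ is exactly the ``EM-data'' of an $M$-indiscernible sequence of random elements, and $(ii)$--$(iii)$ concern the particular such sequence arising from a Morley power. The plan is to prove the parts in the stated order, using $(i)$ and $(ii)$ once I reach $(iii)$. The one genuinely non-elementary ingredient is the transfer principle that $T$ NIP implies $T^{R}$ NIP; everything else is bookkeeping with the Morley product.

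For $(i)$, fix $\varphi(x,b)$ and suppose $\lim_{i}\lambda(\varphi(x_i,b))$ does not exist. Then there are reals $s<t$ and, after passing to a subsequence (whose restriction of $\lambda$ is again $M$-indiscernible), arbitrarily long finite blocks of indices along which $\lambda(\varphi(x_i,b))$ alternates between $\le s$ and $\ge t$. Because $\lambda$ is $M$-indiscernible this alternation is witnessed uniformly, and unwinding it in the randomization produces, for every $n$, an independence pattern for the continuous formula associated to $\varphi$; that is, $\varphi$ has IP in $T^{R}$. Since $T$ is NIP, $T^{R}$ is NIP (\cite{Ben}, as packaged in \cite[Lemma 2.10]{HPS}), a contradiction. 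This is the step that leaves first-order logic, and it is the main obstacle: a single ``bad'' pattern of measure-values does not by itself yield an IP-pattern for $\varphi$ inside $T$, so one really must pass to the randomization.

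For $(ii)$, I would compute directly. Fix $i_0<\cdots<i_n$ and an $\mathcal{L}_{(x_i)_{i \in \omega}}(M)$-formula $\varphi(x_{i_0},\dots,x_{i_n})$, and pick $m\ge i_n$. Viewing $\varphi$ as a formula in $x_0,\dots,x_m$ that happens not to mention the remaining coordinates, I evaluate $\mu^{(m)}(\varphi)$ using Definition \ref{prod:inf}. The product is well-defined and associative by $(a)$ of Fact \ref{KM:imp2} (this is where NIP enters), so I may reparenthesize freely; each factor corresponding to an omitted variable $x_j$ is the probability measure $\mu$, and by Fubini integrating such a variable out contributes a factor of $1$. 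Hence $\mu^{(m)}(\varphi(x_{i_0},\dots,x_{i_n}))$ equals the ordered product $\mu(x_{i_n})\otimes\cdots\otimes\mu(x_{i_0})$ applied to $\varphi$. As every factor is the single measure $\mu$, this ordered product depends only on the number of coordinates, not on which indices were chosen, so it equals $\mu^{(n)}(\varphi(x_0,\dots,x_n))$. This is precisely $M$-indiscernibility, and the identical computation applies to $\nu$.

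For $(iii)$, since $\mu,\nu$ are $M$-invariant they are Borel-definable over $M$ (Fact \ref{KM:imp}$(vi)$), hence each is determined by the functions $F_{\mu}^{\varphi},F_{\nu}^{\varphi}$ on $S_y(M)$. By $(ii)$ both $\mu^{(\omega)}$ and $\nu^{(\omega)}$ are $M$-indiscernible, so $(i)$ applies and gives $\mu(\varphi(x,b))=\lim_i\mu^{(\omega)}(\varphi(x_i,b))$ (each term equals $\mu(\varphi(x,b))$ since the marginal of $\mu^{(\omega)}$ on any one coordinate is $\mu$), and likewise for $\nu$. The task thus reduces to showing that the sequence of moments $\big(\mu^{(n)}|_M\big)_{n}$ determines $F_{\mu}^{\varphi}$ as a function on $S_y(M)$. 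Here I would argue by contraposition: if $\mu\ne\nu$ there is a formula and a parameter $b$, with $q=\tp(b/M)$, on which they differ, and I would use the NIP (VC-style) concentration of the empirical averages of $\mathbf{1}_{\varphi(x_j,y)}$ under $\mu^{(n)}$ to read $\mu(\varphi(x,b))$ off the high-order moments. The genuine difficulty, exactly as in $(i)$, is that these moments a priori only control $\mathcal{L}(M)$-parameters whereas $b$ is an arbitrary element of $\mathcal{U}$; bridging this local-to-global gap again uses the rigidity supplied by the NIP randomization, and it is the technical heart of \cite[Corollary 2.14]{HPS}.
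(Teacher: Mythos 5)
The paper does not actually prove this statement: it is recorded as a Fact, with $(i)$ cited to \cite[Lemma 2.10]{HPS} (itself an interpretation of Ben Yaacov's results on randomizations) and $(ii)$, $(iii)$ to \cite[Corollary 2.14]{HPS}, so your attempt is really being measured against the arguments in \cite{HPS}. Your sketch of $(i)$ correctly isolates the one non-elementary ingredient (realizing $\lambda$ as data about an indiscernible sequence of random variables and using that the randomization of an NIP theory is NIP), though the alternation-to-IP extraction should be performed inside $T^{R}$ after that realization, not directly on the values $\lambda(\varphi(x_i,b))$, since $b\notin M$ and $M$-indiscernibility says nothing a priori about those values. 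Your computation for $(ii)$ --- associativity of the Morley product under NIP plus integrating out unused coordinates --- is sound and is essentially the standard argument.

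The genuine gap is in $(iii)$. You correctly identify the difficulty (the hypothesis $\mu^{(\omega)}|_{M}=\nu^{(\omega)}|_{M}$ only concerns $\mathcal{L}(M)$-formulas, while the witness $b$ to $\mu\neq\nu$ lives in $\mathcal{U}$), but you then defer its resolution to ``VC-style concentration'' and ``the rigidity supplied by the NIP randomization'' without an argument, and the concentration idea as stated cannot close the gap: no information about the restrictions $\mu^{(n)}|_{M}$ by itself determines $\mu(\varphi(x,b))$ for $b\notin M$, since invariance only ties that value to $\tp(b/M)$. The actual bridge in \cite{HPS} is part $(i)$ itself, applied to a new object built from the hypothesis: one forms a single measure on variables indexed by $\omega+\omega$, namely $\nu^{(\omega)}$ on the first block followed by $\mu^{(\omega)}$ on the second, and shows by induction (using $M$-invariance of both measures together with $\mu^{(\omega)}|_{M}=\nu^{(\omega)}|_{M}$) that this concatenation is $M$-indiscernible. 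Its one-variable marginals are the global measures $\nu$ on the first block and $\mu$ on the second, so restricting to an increasing $\omega$-sequence of indices that meets both blocks infinitely often and applying $(i)$ to $\varphi(x,b)$ forces $\nu(\varphi(x,b))=\mu(\varphi(x,b))$ for every $b\in\mathcal{U}^{y}$. This concatenation construction is the missing idea; without it your proof of $(iii)$ does not go through.
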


We now establish a formal connection between eventually indiscernible sequences of tuples and indiscernible measures. We use this connection to show that the eventually indiscernible sequences converges to a measure in $\mathfrak{M}_{x}(\mathcal{U})$. 

\begin{proposition}\label{converge} Let $(\overline{c}_i)_{i \in \omega}$ be a sequence of points in $(\mathcal{U}^{x})^{<\omega}$. If $(\overline{c}_i)_{i \in \omega}$ is an eventually indiscernible sequence over some model $M$, then the sequence $(\Av(\overline{c}_i))_{i \in \omega}$ converges in $\mathfrak{M}_{x}(\mathcal{U})$. 
\end{proposition}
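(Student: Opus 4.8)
The plan is to reduce to pointwise convergence and then feed an auxiliary $M$-indiscernible measure into Fact \ref{HPSFact}. By the elementary fact immediately preceding Fact \ref{HPSFact} (pointwise convergence of a sequence of measures implies convergence in $\mathfrak{M}_{x}(\mathcal{U})$), it suffices to show that for every formula $\varphi(x,b) \in \mathcal{L}_{x}(\mathcal{U})$ the sequence of reals $\big(\Av(\overline{c}_{i})(\varphi(x,b))\big)_{i \in \omega}$ converges. When $b \in M$ this is immediate: the case $k=0$ of eventual indiscernibility over $M$ says exactly that this sequence is Cauchy, hence convergent. The entire difficulty is therefore concentrated in formulas whose parameter $b$ lies outside $M$, and this is precisely where I would invoke NIP through Fact \ref{HPSFact}(i).

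I would argue by contradiction. Suppose $\big(\Av(\overline{c}_{i})(\varphi(x,b))\big)_{i}$ fails to converge for some $\varphi(x,b)$. Since $[0,1]$ is compact, after thinning I may pass to a subsequence $(\overline{c}'_{i})_{i \in \omega}$ of $(\overline{c}_{i})_{i \in \omega}$ and fix reals $r < s$ so that $\Av(\overline{c}'_{2i})(\varphi(x,b)) < r$ and $\Av(\overline{c}'_{2i+1})(\varphi(x,b)) > s$ for all $i$. A subsequence of an eventually indiscernible sequence is again eventually indiscernible over $M$, so $(\overline{c}'_{i})_{i}$ retains this property. The key step is now to manufacture a single measure $\lambda \in \mathfrak{M}_{(x_i)_{i \in \omega}}(\mathcal{U})$ that is genuinely $M$-indiscernible yet whose one-variable marginals still oscillate on $\varphi(x,b)$. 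This is the crux, since the two obvious candidates each fail half of what is needed: the infinite product $\bigotimes_{i}\Av(\overline{c}'_{i})$ has exactly the marginals $\lambda(\varphi(x_i,b)) = \Av(\overline{c}'_{i})(\varphi(x,b))$ but is not $M$-indiscernible, whereas the eventual EM-measure of the sequence is $M$-indiscernible but is only defined over $M$ and so carries no information about $\varphi(x,b)$.

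I would reconcile these by an ultralimit construction that repairs indiscernibility while preserving the oscillation. Fix a nonprincipal ultrafilter $\mathcal{F}$ on $\omega$. For each finite increasing tuple of coordinates $i_0 < \dots < i_k$, assign to the coordinate $x_{i_j}$ elements of $(\overline{c}'_{\cdot})$ whose index has the same parity as $i_j$ and tends to infinity (inner coordinates taken larger), and set $\lambda(\psi(x_{i_0},\dots,x_{i_k}))$ to be the iterated $\mathcal{F}$-limit of $\psi_{c}$ evaluated along these choices. Being an ultralimit of finite product measures, each such assignment is a finitely additive probability measure, and the assignments marginalize consistently, so they glue to a genuine $\lambda \in \mathfrak{M}_{(x_i)_{i \in \omega}}(\mathcal{U})$. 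For $\psi \in \mathcal{L}_{(x_i)_{i \in \omega}}(M)$, eventual indiscernibility over $M$ forces $\psi_{c}(\overline{c}'_{m_0},\dots,\overline{c}'_{m_k})$ to converge, as $m_0 < \dots < m_k \to \infty$, to a value that does not depend on the chosen indices (in particular not on their parities); hence $\lambda(\psi(x_{i_0},\dots,x_{i_k})) = \lambda(\psi(x_0,\dots,x_k))$ and $\lambda$ is $M$-indiscernible. On the other hand, by construction the even coordinates read $\varphi(x,b)$ off the low part of the subsequence and the odd coordinates off the high part, so $\lambda(\varphi(x_{2i},b)) \le r < s \le \lambda(\varphi(x_{2i+1},b))$; thus $\lim_{i} \lambda(\varphi(x_i,b))$ does not exist. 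This contradicts Fact \ref{HPSFact}(i). Applying the elementary fact then yields the convergence of $\big(\Av(\overline{c}_{i})\big)_{i}$ in $\mathfrak{M}_{x}(\mathcal{U})$.

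The main obstacle, as indicated, is the simultaneous achievement of exact $M$-indiscernibility of $\lambda$ and nontrivial (oscillating) behaviour of its marginals on the off-$M$ parameter $b$. The bookkeeping needed to verify that the parity-respecting ultralimit is a well-defined, consistently marginalizing measure is the only genuinely technical part, everything else being a direct unwinding of definitions together with Fact \ref{HPSFact}(i).
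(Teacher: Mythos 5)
Your proof is correct and follows essentially the same route as the paper: argue by contradiction, pass to a subsequence whose averages oscillate on $\varphi(x,b)$, and produce an $M$-indiscernible measure $\lambda \in \mathfrak{M}_{(x_i)_{i \in \omega}}(\mathcal{U})$ whose marginals $\lambda(\varphi(x_i,b))$ fail to converge, contradicting Fact \ref{HPSFact}$(i)$. The only difference is packaging: the paper exhibits $\lambda$ as a point in the intersection of a family of closed subsets of $\mathfrak{M}_{(x_i)_{i \in \omega}}(\mathcal{U})$ with the finite intersection property, while you realize it as an iterated ultralimit of finite product measures; the two constructions are interchangeable here.
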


\begin{proof} Assume not. Then there exists some formula $\psi(x,b)$ in $\mathcal{L}_{x}(\mathcal{U})$, some $\epsilon_{0} >0$, and some subsequence $(\overline{c}_i')_{i \in \omega}$ of $(\overline{c}_{i})_{i \in \omega}$ such that for each natural number $i$, 
\begin{equation*}
|\Av(\overline{c}_i')(\psi(x;b)) - \Av(\overline{c}_{i+1}')(\psi(x;b))| > \epsilon_0.
\end{equation*}
It is clear that $(\overline{c}_{i}')_{i \in \omega}$ is also eventually indiscernible over $M$. We now aim to contradict $(i)$ of Fact \ref{HPSFact} via (topological) compactness of the space $\mathfrak{M}_{\omega}(\mathcal{U}) : = \mathfrak{M}_{(x_{i})_{i \in \omega}}(\mathcal{U})$. For any formula $\varphi(x_{i_{0}},...,x_{i_{k}}) \in \mathcal{L}_{(x_i)_{i \in \omega}}(M)$, we let $r_{\varphi}$ be the unique real number such that for every $\epsilon > 0$, there exists an $N_{\epsilon,\varphi}$ so that for any $n_k > ... >n_0 > N_{\epsilon,\varphi}$ we have 
\begin{equation*} | \varphi_{c}(\overline{c}'_{n_0},...,\overline{c}'_{n_k}) - r_{\varphi} | < \epsilon. 
\end{equation*}
Since the sequence $(\overline{c}_{i}')_{i\in \omega}$ is eventually indiscernible over $M$, $r_{\varphi}$ exists for each $\varphi(\overline{x}) \in \mathcal{L}_{(x_i)_{i \in \omega}}(M)$. Now, for every $\varphi(\overline{x}) \in \mathcal{L}_{(x_i)_{i \in \omega}}(M)$ and $\epsilon >0$, we define the following family of closed subsets of $\mathfrak{M}_{\omega}(\mathcal{U})$; 
\begin{equation*} C_{\epsilon,\varphi} = \Big\{ \lambda \in \mathfrak{M}_{\omega}(\mathcal{U}): r_{\varphi} - \epsilon \leq \lambda(\varphi(\overline{x})) \leq r_{\varphi} + \epsilon \Big\}.
\end{equation*}
We also define another family of sets and argue that they are closed; let
\begin{equation*}
D_{i} = \Big\{\lambda \in \mathfrak{M}_{\omega}(\mathcal{U}) : |\lambda(\psi(x_i,b)) - \lambda(\psi(x_{i+1},b))| \geq \frac{\epsilon_{0}}{2}\Big\}.
\end{equation*}
Notice that $D_{i}$ is closed since for every natural number $i$, the evaluation map $E_{i}: \mathfrak{M}_{\omega}(\mathcal{U}) \to [0,1]$ via $E_{i}(\lambda) = \lambda(\varphi(x_i,b))$ is continuous. Indeed, define $F_{i} = E_{i} - E_{i+1}$ and $H_{i} = E_{i+1} - E_{i}$.  Then we have $D_{i} = F_{i}^{-1}([\frac{\epsilon_{0}}{2},1]) \cup H_{i}^{-1}([\frac{\epsilon_{0}}{2},1])$ and so $D_{i}$ is a union of two closed sets and therefore closed. Using $(\overline{c}_{i}')_{i \in \omega}$, the collection $\Phi = \{C_{\epsilon,\varphi}: \epsilon > 0, \varphi(\overline{x}) \in \mathcal{L}_{\omega}(M)\}\cup\{D_{i}: i \in \omega\}$ has the finite intersection property. Therefore, there exists some $\lambda \in \mathfrak{M}_{\omega}(\mathcal{U})$ in the intersection of all the sets in $\Phi$. Moreover, $\lambda$ is $M$-indiscernible by construction. Since $\lambda$ is in $D_{i}$ for each $i$, its existence contradicts $(i)$ of Fact \ref{HPSFact}. 
\end{proof} 

\subsection{Smooth sequences} In this subsection, we define the notion of a smooth sequence and prove the main theorem. If $\mu$ is a global $M$-invariant measure, then a smooth sequence is a collection of models and measures meant to replicate a Morley sequence. The ideology is the following: A Morley sequence in $p$ over $M$ is to the infinite type $p^{\omega}|_{M}$ as a smooth sequence in $\mu$ over $M$ is to the measure $\mu^{(\omega)}|_{M}$. We now provide the formal definition.

\begin{definition}Let $\mu \in \mathfrak{M}_{x}(\mathcal{U})$ and assume that $\mu$ is invariant over some small model $M$. Then, a \textbf{smooth sequence in $\mu$ over $M$} is a sequence of pairs of measures and small models, $(\mu_i,N_i)_{i \in \omega}$, such that: 
\begin{enumerate}[$(i)$]
    \item $M \prec N_0$ and $N_i \prec N_{i +1}$ and each $N_i$ is small.
    \item $\mu_{i}$ is smooth over $N_i$. 
    \item $\mu_{0}|_M = \mu|_M$ and for $i > 0$, $\mu_{i}|_{N_{i-1}} = \mu|_{N_{i-1}}$. 
\end{enumerate}
Furthermore, we define $\bigotimes_{i=0}^{\omega} \mu_{i} = \bigcup_{i =0}^{\omega} \bigotimes_{i=0}^{n}\mu_i$ which is an element of $\mathfrak{M}_{(x_i)_{i \in \omega}}(\mathcal{U})$. We let $N_{\omega} = \bigcup_{i \in \omega} N_{i}$. Notice that for each $i \in \omega$, the measure $\mu_{i}$ is smooth over $N_{\omega}$.
\end{definition}

\begin{proposition}\label{existence} If $T$ is a countable NIP theory, $\mu \in \mathfrak{M}_{x}(\mathcal{U})$, and $\mu$ is invariant over $M$ where $|M|=\aleph_0$, then there exists a smooth sequence $(\mu_{i},N_i)_{i\in\omega}$ in $\mu$ over $M$ such that each $N_{i}$ is countable. 
\end{proposition}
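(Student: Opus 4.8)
The plan is to construct the smooth sequence recursively, using the smooth extension fact at each stage and being careful to keep all the models countable. The key tool is $(viii)$ of Fact \ref{KM:imp}, which guarantees that over any countable model and any measure there exists a smooth extension living over a slightly larger model with the prescribed restriction.

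First I would set up the base case. Since $\mu$ is invariant over $M$ with $|M| = \aleph_0$, I apply $(viii)$ of Fact \ref{KM:imp} to $M$ and $\mu$ to obtain a model $N_0$ with $M \prec N_0 \prec \mathcal{U}$ and a measure $\mu_0 \in \mathfrak{M}_x(\mathcal{U})$ such that $\mu_0$ is smooth over $N_0$ and $\mu_0|_M = \mu|_M$. The only subtlety is that Fact \ref{KM:imp}$(viii)$ as stated does not promise that $N_0$ is countable, so I would invoke Proposition \ref{m:countable}: a smooth measure is smooth over some countable model, and by absorbing the relevant countably many parameters (using that $T$ and $M$ are countable) I can shrink $N_0$ to a countable model over which $\mu_0$ is still smooth and still contains $M$. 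This gives condition $(iii)$ for $i = 0$ and the smoothness condition $(ii)$.

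Next I would carry out the recursive step. Suppose $N_0 \prec \cdots \prec N_{i-1}$ and $\mu_0, \dots, \mu_{i-1}$ have been built, all $N_j$ countable. Since $N_{i-1}$ is countable and $\mu$ is invariant (hence its restriction $\mu|_{N_{i-1}}$ makes sense), I apply Fact \ref{KM:imp}$(viii)$ again, this time with the model $N_{i-1}$ in place of $M$, to obtain some model extending $N_{i-1}$ and a measure $\mu_i$ smooth over it with $\mu_i|_{N_{i-1}} = \mu|_{N_{i-1}}$. As in the base case, I shrink this model to a countable $N_i$ with $N_{i-1} \prec N_i$ over which $\mu_i$ remains smooth, using Proposition \ref{m:countable} together with the countability of everything constructed so far. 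This secures conditions $(i)$, $(ii)$, and $(iii)$ at stage $i$. Running the recursion through all $i \in \omega$ yields the sequence $(\mu_i, N_i)_{i \in \omega}$, and since each $N_i$ is countable, the stated conclusion holds.

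The main obstacle is the countability bookkeeping rather than any deep model theory: Fact \ref{KM:imp}$(viii)$ only asserts the existence of \emph{some} small model witnessing smoothness of the extension, so at each step I must argue that this witnessing model can be taken countable. This is exactly where Proposition \ref{m:countable} is essential, as it reduces smoothness over a large model to smoothness over a countable submodel; combined with the fact that a countable base model plus countably many new parameters still generates a countable model, the recursion stays within the countable world at every stage. No use of NIP beyond the smooth extension fact (which itself requires NIP) is needed here.
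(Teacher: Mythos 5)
Your proof is correct and is essentially the argument the paper intends: the paper's proof is just the one-line remark that the claim ``follows directly from Proposition \ref{m:countable},'' leaving implicit exactly the recursion you describe, namely iterating the smooth extension fact ($(viii)$ of Fact \ref{KM:imp}) and using Proposition \ref{m:countable} (together with downward L\"owenheim--Skolem and the fact that smoothness passes to larger base models) to keep each $N_i$ countable. You have simply written out the bookkeeping that the paper omits.
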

\begin{proof} This follows directly from Proposition \ref{m:countable}.
\end{proof}

\begin{proposition}[T is NIP]\label{smoothinv} Assume that $\mu \in \mathfrak{M}_{x}(\mathcal{U})$ and $\mu$ is $M$-invariant. Let $(\mu_i,N_i)_{i \in \omega}$ be a smooth sequence in $\mu$ over $M$. Then, $\bigotimes_{i=0}^{\omega} \mu_{i} |_{M} = \mu^{(\omega)}|_{M}$. Hence, $\bigotimes_{i=0}^{\omega} \mu_i$ is $M$-indiscernible.
\end{proposition}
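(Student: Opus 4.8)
The plan is to prove, by induction on $n$, that $\bigotimes_{i=0}^{n}\mu_i|_M = \mu^{(n)}|_M$ for every $n$; taking the union over $n$ then gives $\bigotimes_{i=0}^{\omega}\mu_i|_M = \mu^{(\omega)}|_M$. The base case $n=0$ is immediate, since $\bigotimes_{i=0}^{0}\mu_i = \mu_0$ and $\mu_0|_M = \mu|_M = \mu^{(0)}|_M$ by condition $(iii)$ in the definition of a smooth sequence. For the inductive step write $\rho := \bigotimes_{i=0}^{n-1}\mu_i$. Since each $\mu_i$ is smooth, the product does not depend on the order of its factors (by $(i)$ and $(iii)$ of Fact \ref{KM:imp2}), so I may write $\bigotimes_{i=0}^{n}\mu_i = \mu_n \otimes \rho$ with $\mu_n$ on the innermost variable $x_n$, matching the convention for $\mu^{(n)} = \mu(x_n)\otimes\mu^{(n-1)}$; I assume inductively that $\rho|_M = \mu^{(n-1)}|_M$. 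The step splits into two claims: first, that $(\mu_n\otimes\rho)(\varphi) = (\mu\otimes\rho)(\varphi)$ for every $\varphi\in\mathcal L_{x_0,\ldots,x_n}(M)$, and second, that $(\mu\otimes\rho)(\varphi) = (\mu\otimes\mu^{(n-1)})(\varphi) = \mu^{(n)}(\varphi)$.

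The second claim is where the inductive hypothesis enters, and it is easy. Since $\mu$ is definable over $M$ and $\varphi\in\mathcal L(M)$, the Morley product may be computed over the small space $S_{x_0,\ldots,x_{n-1}}(M)$, giving $(\mu\otimes\rho)(\varphi) = \int_{S(M)} F_{\mu}^{\varphi}\, d(\rho|_M)$ and likewise $(\mu\otimes\mu^{(n-1)})(\varphi) = \int_{S(M)} F_{\mu}^{\varphi}\, d(\mu^{(n-1)}|_M)$; these agree because $\rho|_M = \mu^{(n-1)}|_M$. The point is that here the inner factor is $\mu$ itself, definable over $M$, which is exactly what permits shrinking the domain of integration to $S(M)$, where the inductive hypothesis applies.

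The first claim is the heart of the matter, and it is precisely here that smoothness (rather than mere invariance) is used. Each $\mu_i$ with $i<n$ is smooth over $N_{n-1}$ (smoothness passes to larger models), so $\rho$ is smooth over $N_{n-1}$ by $(iii)$ of Fact \ref{KM:imp2}, hence finitely satisfiable over $N_{n-1}$ by Fact \ref{KM:imp}; thus by $(iv)$ of Fact \ref{Avcls} there is a net $(\Av(\overline{a}_t))_t$ with each $\overline{a}_t \in (N_{n-1}^{x_0\cdots x_{n-1}})^{<\omega}$ converging to $\rho$ in $\mathfrak{M}_{x_0,\ldots,x_{n-1}}(\mathcal U)$. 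As $\mu_n$ and $\mu$ are definable, the maps $\lambda \mapsto (\mu_n\otimes\lambda)(\varphi)$ and $\lambda\mapsto(\mu\otimes\lambda)(\varphi)$ are continuous by $(i)$ of Fact \ref{cont:meas}, so $(\mu_n\otimes\rho)(\varphi) = \lim_t (\mu_n\otimes\Av(\overline{a}_t))(\varphi)$ and $(\mu\otimes\rho)(\varphi) = \lim_t (\mu\otimes\Av(\overline{a}_t))(\varphi)$. For each fixed $t$ these two quantities coincide: writing $\overline{a}_t = (\overline{a}_{t,1},\ldots,\overline{a}_{t,m_t})$, one has $(\mu_n\otimes\Av(\overline{a}_t))(\varphi) = \frac{1}{m_t}\sum_{j} \mu_n(\varphi(\overline{a}_{t,j}, x_n))$ and $(\mu\otimes\Av(\overline{a}_t))(\varphi) = \frac{1}{m_t}\sum_{j} \mu(\varphi(\overline{a}_{t,j}, x_n))$, and since each $\overline{a}_{t,j}$ lies in $N_{n-1}$ the formula $\varphi(\overline{a}_{t,j}, x_n)$ has all of its parameters in $N_{n-1}$, whence $\mu_n(\varphi(\overline{a}_{t,j},x_n)) = \mu(\varphi(\overline{a}_{t,j},x_n))$ by the agreement $\mu_n|_{N_{n-1}} = \mu|_{N_{n-1}}$. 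Passing to the limit establishes the first claim.

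This completes the induction, and hence the equality $\bigotimes_{i=0}^{\omega}\mu_i|_M = \mu^{(\omega)}|_M$; the final assertion then follows at once, since $M$-indiscernibility of a measure in $\mathfrak{M}_{(x_i)_{i\in\omega}}(\mathcal U)$ depends only on its restriction to $M$, and $\mu^{(\omega)}$ is $M$-indiscernible by $(ii)$ of Fact \ref{HPSFact}. I expect the main obstacle to be the first claim. A naive attempt to substitute $\mu$ for $\mu_n$ directly inside $\mu_n\otimes\rho$ fails, because the tuples realized by the outer measure $\rho$ need not lie in $N_{n-1}$, the model on which $\mu_n$ and $\mu$ agree. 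Smoothness of $\rho$ is exactly what circumvents this: it lets $\rho$ be approximated by averages of tuples drawn from $N_{n-1}$ itself, so that every realized parameter stays inside $N_{n-1}$. This is the measure-theoretic analogue of the fact that, for a Morley sequence, the initial segment $a_0,\ldots,a_{n-1}$ lies inside the model over which $a_n$ is generic.
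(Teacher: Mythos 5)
Your overall architecture --- induction on $n$, the splitting of the inductive step into the two claims, and the use of the agreement $\mu_n|_{N_{n-1}}=\mu|_{N_{n-1}}$ together with the smoothness of $\rho=\bigotimes_{i=0}^{n-1}\mu_i$ over $N_{n-1}$ --- matches the paper's proof, and your second claim is fine (it needs only Borel-definability of $\mu$ over $M$, which NIP supplies, to shrink the space of integration to $S_{\overline{x}}(M)$). The gap is in the first claim. You write ``as $\mu_n$ and $\mu$ are definable, the maps $\lambda\mapsto(\mu_n\otimes\lambda)(\varphi)$ and $\lambda\mapsto(\mu\otimes\lambda)(\varphi)$ are continuous by $(i)$ of Fact \ref{cont:meas}'', but the proposition only assumes that $\mu$ is $M$-invariant. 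Under NIP this yields Borel-definability, not definability, and Fact \ref{cont:meas}$(i)$ genuinely requires definability: the map $\lambda\mapsto(\mu\otimes\lambda)(\varphi)=\int_{S_{\overline{x}}(N)}F_{\mu}^{\theta}\,d(\lambda|_{N})$ is integration of the fixed fiber function $F_{\mu}^{\theta}$ against $\lambda|_{N}$, and weak-$*$ convergence of $\Av(\overline{a}_t)|_{N}$ to $\rho|_{N}$ only allows passage to the limit when the integrand is continuous. For a merely Borel integrand the limit step can fail (already for a net $\delta_{p_t}\to\delta_p$ with $F_{\mu}^{\theta}$ discontinuous at $p$). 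This matters for the intended application: in the proof of Theorem T2 the proposition is applied to measures that are only finitely satisfiable over a countable model, which need not be definable. So the half of your termwise computation concerning $\mu_n$ is justified ($\mu_n$ is smooth, hence definable), but the passage to the limit on the $\mu$ side is not.

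The repair is short and is exactly what the paper does: instead of approximating $\rho$ by averages and taking a limit, invoke the commutativity of smooth measures with invariant (Borel-definable) measures, $(i)$ of Fact \ref{KM:imp2}. Since $\rho$ is smooth over $N_{n-1}$ and both $\mu_n$ and $\mu$ are Borel-definable,
\begin{align*}
(\mu_n\otimes\rho)(\varphi)&=(\rho\otimes\mu_n)(\varphi)=\int_{S_{x_n}(N_{n-1})}F_{\rho}^{\theta^*}\,d(\mu_n|_{N_{n-1}})\\
&=\int_{S_{x_n}(N_{n-1})}F_{\rho}^{\theta^*}\,d(\mu|_{N_{n-1}})=(\rho\otimes\mu)(\varphi)=(\mu\otimes\rho)(\varphi),
\end{align*}
where the middle equality is exactly $\mu_n|_{N_{n-1}}=\mu|_{N_{n-1}}$, and the space of integration may be taken to be $S_{x_n}(N_{n-1})$ because $\rho$ is definable over $N_{n-1}$ and $\varphi\in\mathcal{L}(M)$. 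Commutativity moves $\mu$ into the position of the measure being integrated against, where only its restriction to $N_{n-1}$ matters, while the integrand becomes the fiber map of the smooth measure $\rho$ and is therefore continuous. Your closing intuition --- that smoothness of $\rho$ is what keeps all parameters inside $N_{n-1}$ --- is the right one; it just has to be implemented through commutativity rather than through continuity of $\lambda\mapsto(\mu\otimes\lambda)(\varphi)$.
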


\begin{proof}
We prove this via induction on formulas in $\mathcal{L}_{(x_i)_{i \in \omega}}(\mathcal{U})$. For our base case, it is true by construction that $\mu_{0}|_{M} = \mu|_{M}$. For our induction hypothesis, we assume that $\mu^{(k-1)}|_{M} = \bigotimes_{i=0}^{k-1} \mu_{i}|_{M}$. For ease of notation, we set $\lambda = \bigotimes_{i=0}^{k-1} \mu_{i}$ and show the induction step: Let $\varphi(x_0,...,x_k)$ be any formula in $\mathcal{L}_{x_0,...,x_k}(M)$. Since the product of smooth measures is smooth (by $(iii)$ of Fact \ref{KM:imp2}), we have that $\lambda$ is smooth over $N_{k-1}$. In particular, $\lambda$ is invariant over $N_{k-1}$. We let $\overline{x} = (x_0,...,x_{k-1})$ and $\theta(x_{k};\overline{x}) = \varphi(x_{0},...,x_{k})$. We consider the following computation followed by a list of justifications. 

\begin{equation*}
    \mu_{k} \otimes \lambda (\varphi(x_0,...,x_{k})) = \int_{S_{\overline{x}}(N_{k})} F_{\mu_{k}}^{\theta} d(\lambda|_{N_{k}}) \overset{(a)}{=} \int_{S_{x_{k}}(N_{k})} F_{\lambda}^{\theta^*}d(\mu_{k}|_{N_{k}})
\end{equation*}
\begin{equation*}
 \overset{(b)}{=} \int_{S_{x_{k}}(N_{k-1})}F_{\lambda}^{\theta^{*}}d(\mu_{k}|_{N_{k-1}}) \overset{(c)}{=} \int_{S_{x_{k}}(N_{k-1})}F_{\lambda}^{\theta^{*}}d(\mu|_{N_{k-1}}) \overset{(a)}{=}\int_{S_{\overline{x}}(N_{k-1})}F_{\mu}^{\theta}d(\lambda|_{N_{k-1}})
\end{equation*} 
\begin{equation*}
\overset{(d)}{=} \int_{S_{\overline{x}}(M)} F_{\mu}^{\theta} d(\lambda|_M) \overset{(e)}{=} \int_{S_{\overline{x}}(M)} F_{\mu}^{\theta} d(\mu^{(k-1)}|_M) =  \mu \otimes \mu^{(k-1)} (\varphi(x_0,...,x_{k})).
\end{equation*} 
We provide the following justifications: 
\begin{enumerate}[($a$)]
\item Smooth measures commute with invariant measures. 
\item Changing space of integration since $\lambda$ is invariant over $N_{k-1}$. 
\item By construction of smooth sequences, we have that $\mu_{k}|_{N_{k - 1}} = \mu|_{N_{k -1}}$. 
\item Changing space of integration since $\mu$ is invariant over $M$. 
\item By our induction hypothesis. \qedhere
\end{enumerate}
\end{proof}

We now begin the proof of our main theorem. Again, the proof is similar to both the generically stable case in the previous section and even more so to the proof of Lemma 2.8 in \cite{Invariant}. Here, the major difference is that we replace the Morley sequence in that proof with a countable model, $N_{\omega}$, which ``contains" a smooth sequence in $\mu$ over $M$. Then we find a sequence of elements in $(M^{x})^{< \omega}$ such that the associated average measures converge to $\mu|_{N_{\omega}}$ in $\mathfrak{M}_{x}(N_{\omega})$. After choosing an eventually indiscernible subsequence, we know from our NIP assumption that this new sequence converges to a global measure $\nu$ in $\mathfrak{M}_{x}(\mathcal{U})$. Finally, we demonstrate that $\nu^{(\omega)}|_{M} = \mu^{(\omega)}|_{M}$ which completes the proof. 

\begin{theorem}[$T$ is NIP] Let $\mu$ be finitely satisfiable over a countable model $M$. Then there exists a sequence $(\overline{a})_{i \in \omega}$ of elements, each in $(M^{x})^{<\omega}$, such that for any $\theta(x) \in \mathcal{L}_{x}(\mathcal{U})$, we have that, 
\begin{equation*}
    \lim_{i \to \infty} \Av(\overline{a}_{i})(\theta(x)) = \mu(\theta(x)). 
\end{equation*} 
\end{theorem}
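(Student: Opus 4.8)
The plan is to transplant the proof of Lemma~\ref{dfs:lemma} together with the convergence argument of Lemma~\ref{gs:lemma} into the measure setting, using a smooth sequence in place of a Morley sequence. Since $\mu$ is finitely satisfiable over $M$ it is $M$-invariant (by $(iv)$ of Fact~\ref{KM:imp}), so by Proposition~\ref{existence} I fix a smooth sequence $(\mu_i,N_i)_{i\in\omega}$ in $\mu$ over $M$ with each $N_i$ countable, and set $N_\omega=\bigcup_{i\in\omega}N_i$, a countable model playing the role of ``the model containing the Morley sequence''. Because $N_\omega$ is countable, $\mathcal{L}_x(N_\omega)$ is countable and hence $\mathfrak{M}_x(N_\omega)$ is metrizable; since $\mu$ is finitely satisfiable over $M$, part $(iv)$ of Fact~\ref{Avcls} together with metrizability lets me enumerate $\mathcal{L}_x(N_\omega)$ and diagonalize to produce an honest \emph{sequence} $(\overline{b}_i)_{i\in\omega}$ in $(M^{x})^{<\omega}$ with $\lim_{i\to\infty}\Av(\overline{b}_i)=\mu|_{N_\omega}$ in $\mathfrak{M}_x(N_\omega)$.

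Next I would apply Proposition~\ref{correct} to pass to a subsequence $(\overline{c}_i)_{i\in\omega}$ of $(\overline{b}_i)_{i\in\omega}$ that is eventually indiscernible over $N_\omega$; this does not disturb the convergence, so still $\lim_{i\to\infty}\Av(\overline{c}_i)=\mu|_{N_\omega}$ in $\mathfrak{M}_x(N_\omega)$. By Proposition~\ref{converge}, the NIP hypothesis guarantees that $(\Av(\overline{c}_i))_{i\in\omega}$ converges to some global measure $\nu\in\mathfrak{M}_x(\mathcal{U})$. As each $\Av(\overline{c}_i)$ is supported on $M^{x}$ and the class of measures finitely satisfiable over $M$ is closed, $\nu$ is finitely satisfiable over $M$, in particular $M$-invariant, and by construction $\nu|_{N_\omega}=\mu|_{N_\omega}$. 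It then suffices to prove $\nu=\mu$, since the displayed limit in the statement is exactly the assertion $\lim_{i\to\infty}\Av(\overline{c}_i)(\theta(x))=\nu(\theta(x))=\mu(\theta(x))$ for every $\theta(x)\in\mathcal{L}_x(\mathcal{U})$.

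The heart of the argument is to upgrade $\nu|_{N_\omega}=\mu|_{N_\omega}$ to $\nu=\mu$. By $(iii)$ of Fact~\ref{HPSFact} it is enough to show $\nu^{(\omega)}|_{M}=\mu^{(\omega)}|_{M}$, and by Proposition~\ref{smoothinv} the right-hand side equals $\bigotimes_{i=0}^{\omega}\mu_i|_{M}$. I would prove $\nu^{(n)}(\varphi)=\mu^{(n)}(\varphi)$ for every $\varphi\in\mathcal{L}_{x_0,\dots,x_n}(M)$ by induction on $n$, the base case being $\nu|_{M}=\mu|_{M}$. For the inductive step, since $\nu$ is finitely satisfiable over $N_\omega$, the continuity of the product map in $(ii)$ of Fact~\ref{cont:meas} lets me replace the outer factor in $\nu^{(n)}=\nu(x_n)\otimes\nu^{(n-1)}$ by the limit of the average measures $\Av(\overline{c}_m)$; commuting each (smooth) average measure past the invariant inner factor via $(i)$ of Fact~\ref{KM:imp2} (recalling that in NIP invariant measures are Borel-definable by $(vi)$ of Fact~\ref{KM:imp}), and iterating, rewrites $\nu^{(n)}(\varphi)$ as an iterated eventual limit of the quantities $\varphi_c(\overline{c}_{m_0},\dots,\overline{c}_{m_n})$ along the sequence. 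Eventual indiscernibility over $N_\omega$ collapses this iterated limit to the single eventual value, and the convergence $\Av(\overline{c}_i)\to\mu|_{N_\omega}$ over the rich model $N_\omega$, together with Proposition~\ref{smoothinv}, identifies it with $\mu^{(n)}(\varphi)=\bigotimes_{i=0}^{n}\mu_i(\varphi)$.

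I expect the main obstacle to be exactly this last induction: keeping the book-keeping of the nested limits straight while simultaneously changing the model over which products are integrated, and in particular verifying that the order of indices demanded by eventual indiscernibility (the indices $m_0<\dots<m_n$ must all be large) is compatible with the order in which the factors of $\nu^{(n)}$ are peeled off. The NIP hypothesis enters in three essential places: the existence of the global limit $\nu$ (Proposition~\ref{converge}, resting ultimately on the randomization input of Fact~\ref{HPSFact}), the continuity of the product map (Fact~\ref{cont:meas}$(ii)$) that converts products into eventual limits, and the separation criterion (Fact~\ref{HPSFact}$(iii)$) that turns equality of the $\omega$-fold products into the desired equality $\nu=\mu$.
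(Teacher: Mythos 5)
Your setup coincides with the paper's: the same smooth sequence $(\mu_i,N_i)_{i\in\omega}$ with $N_\omega$ countable, the same diagonal construction of $(\overline{b}_i)$ converging to $\mu|_{N_\omega}$, the same passage to an eventually indiscernible subsequence, the same appeal to Proposition \ref{converge} to get a global limit $\nu$ finitely satisfiable over $M$, and the same reduction of $\nu=\mu$ to $\nu^{(\omega)}|_M=\mu^{(\omega)}|_M$ via $(iii)$ of Fact \ref{HPSFact}. The gap is in the inductive step, which you correctly identify as the heart of the matter but do not actually carry out. Your plan is to peel the factors of $\nu^{(n)}$ one at a time, replacing each outer copy of $\nu$ by $\lim_m\Av(\overline{c}_m)$ and commuting it inward, so that $\nu^{(n)}(\varphi)$ becomes the iterated limit $\lim_{m_n}\cdots\lim_{m_0}\varphi_c(\overline{c}_{m_0},\dots,\overline{c}_{m_n})$, and then to collapse this with eventual indiscernibility. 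But eventual indiscernibility (Definition \ref{convex}) only controls $\varphi_c(\overline{c}_{k_0},\dots,\overline{c}_{k_n})$ when the indices \emph{increase with the variable positions}, $k_0<\dots<k_n$. In your iterated limit the innermost index is $m_0$, taken to infinity with $m_1,\dots,m_n$ held fixed, so eventually $m_0>m_n>\dots>m_1$ and the tuple is evaluated in an order that eventual indiscernibility says nothing about. In the type case (Lemma \ref{dfs:lemma}) the analogous problem is resolved by total indiscernibility of the Morley sequence of a $\dfs$ type; here $\mu$ is only finitely satisfiable, no symmetric form of eventual indiscernibility is available, and $\varphi_c$ is not symmetric in its arguments, so the collapse does not go through. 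A second, independent gap: even granting the collapse to a single value $r_\varphi$, your identification of $r_\varphi$ with $\mu^{(n)}(\varphi)=\bigotimes_{i=0}^{n}\mu_i(\varphi)$ is only asserted. The averages converge to $\mu|_{N_\omega}$, not to $\mu_i|_{N_\omega}$; since $\mu_i$ agrees with $\mu$ only on $N_{i-1}$, you cannot simply substitute the averages for the factors $\mu_i$ of the smooth product.

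The paper avoids both problems by never producing an iterated limit. In the inductive step it sets $\lambda=\bigotimes_{i=0}^{k-1}\mu_i$, which is smooth over $N_\omega$ and satisfies $\lambda|_M=\mu^{(k-1)}|_M$ by Proposition \ref{smoothinv}; then it computes
\begin{equation*}
\nu^{(k)}(\varphi)=\int_{S_{\overline{x}}(M)}F_{\nu}^{\theta}\,d(\nu^{(k-1)}|_M)
=\int_{S_{\overline{x}}(N_\omega)}F_{\nu}^{\theta}\,d(\lambda|_{N_\omega})
=\int_{S_{x_k}(N_\omega)}F_{\lambda}^{\theta^*}\,d(\nu|_{N_\omega}),
\end{equation*}
using the induction hypothesis, the change of the space of integration, and the commutation of the smooth measure $\lambda$ with the invariant measure $\nu$; a single application of $\nu|_{N_\omega}=\mu|_{N_\omega}$ then lets the chain be reversed with $\mu$ in place of $\nu$, yielding $\mu^{(k)}(\varphi)$. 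All the work that you hoped eventual indiscernibility would do is instead done by commuting $\lambda$ past $\nu$ and $\mu$, which requires no bookkeeping of index orderings. You should replace your inductive step with this computation (or supply a genuinely order-insensitive indiscernibility statement, which the hypotheses do not provide).
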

\begin{proof} Choose a smooth sequence $(\mu_{i},N_i)_{i \in \omega}$ in $\mu$ over $M$. By Proposition \ref{existence} we may choose this sequence so that for each $i \in \omega$, $N_i$ is countable. In particular, this implies that $N_{\omega}$ is a countable model. We begin by constructing a sequence of elements $(\overline{a}_{i})_{i \in \omega}$ in $(M^{x})^{< \omega}$ such that $(\Av(\overline{a}_{i})|_{N_{\omega}})_{i \in \omega}$ converges to $\mu|_{N_{\omega}}$ in $\mathfrak{M}_{x}(N_{\omega})$. Since $N_{\omega}$ is countable, we let $(\theta_{i}(x))_{i \in \omega}$ be an enumeration of the formulas in $\mathcal{L}_{x}(N_{\omega})$. Since $\mu$ is finitely satisfiable over $M$, we can find we find $\overline{a}_{k} \in (M^{x})^{<\omega}$ such that for any $j \leq k$, we have that, 
\begin{equation*}
    |\mu(\theta_{j}(x)) - \Av(\overline{a}_{k})(\theta_{j}(x))| < \frac{1}{k}. 
\end{equation*}
By construction, it is clear that the sequence $(\Av(\overline{a}_i)|_{N_{\omega}})_{i\in \omega}$ converges to $\mu|_{N_{\omega}}$ in $\mathfrak{M}_{x}(N_{\omega}$). Now, we let $(\overline{c}_i)_{i \in \omega}$ be a subsequence of $(\overline{a}_i)_{i \in \omega}$ so that $(\overline{c}_i)_{i \in \omega}$ is eventually indiscernible over $N_{\omega}$. Then the sequence $(\Av(\overline{c}_i))_{i \in \omega}$ converges in $\mathfrak{M}_{x}(\mathcal{U})$ by Proposition \ref{converge}. Assume that $(\Av(\overline{c}_{i}))_{i \in \omega}$ converges to some measure $\nu \in \mathfrak{M}_{x}(\mathcal{U})$. Hence, $\nu$ is finitely satisfiable over $M$ by $(i)$ of Proposition \ref{finitesat} and therefore $\nu$ is invariant over $M$. We show that $\nu^{(\omega)}|_{M} = \mu^{(\omega)}|_{M}$. This will conclude the proof by $(iii)$ of Fact \ref{HPSFact}. 

Since $(\overline{c}_{i})_{i \in \omega}$ is a subsequence of $(\overline{a}_{i})_{i \in \omega}$, it follows that $\nu|_{N_{\omega}} = \mu|_{N_{\omega}}$ and therefore $\nu|_{M} = \mu|_{M}$. We now proceed by induction. Assume that $\nu^{(k-1)}|_{M} = \mu^{(k-1)}|_{M}$.  Fix $\varphi(x_0,...,x_{k})$ in $\mathcal{L}_{x_0,...,x_k}(M)$.  For ease of notation, set $\lambda = \bigotimes_{i=0}^{k-1} \mu_{i}$. We recall that $\lambda$ is smooth over $N_{\omega}$ (see Fact \ref{KM:imp2}).  By Proposition \ref{smoothinv}, $\mu^{(k-1)}|_{M} = \lambda|_{M}$. We let $\overline{x} = (x_0,...,x_{k-1})$ and let $\theta(x_{k};\overline{x}) = \varphi(x_0,...,x_k)$. We now consider the critical computation followed a small glossary of justifications.
\begin{equation*}
    \nu^{(k)}(\varphi(x_0,...,x_{k})) = \int_{S_{\overline{x}}(M)} F_{\nu}^{\theta} d(\nu^{(k-1)}|_{M}) \overset{(a)}{=} \int_{S_{\overline{x}}(M)} F_{\nu}^{\theta} d(\mu^{(k-1)}|_M)
\end{equation*}
\begin{equation*}
    \overset{(b)}{=} \int_{S_{\overline{x}}(M)} F_{\nu}^{\theta}d(\lambda|_{M}) \overset{(c)}{=} \int_{S_{\overline{x}}(N_{\omega})} F_{\nu}^{\theta}d(\lambda|_{N_{\omega}}) \overset{(d)}{=} \int_{S_{x_{k}}(N_{\omega})} F_{\lambda}^{\theta^*}d(\nu|_{N_{\omega}})
\end{equation*}
\begin{equation*}
    \overset{(e)}{=} \int_{S_{x_{k}}(N_{\omega})} F_{\lambda}^{\theta^*}d(\mu|_{N_\omega}) \overset{(d)}{=} \int_{S_{\overline{x}}(N_{\omega})} F_{\mu}^{\theta} d(\lambda|_{N_{\omega}}) \overset{(c)}{=} \int_{S_{\overline{x}}(M)} F_{\mu}^{\theta} d(\lambda|_{M}) 
\end{equation*}
\begin{equation*}
 \overset{(b)}{=}\int_{S_{\overline{x}}(M)} F_{\mu}^{\theta} d(\mu^{(k-1)}|_{M}) = \mu^{(k)}(\varphi(x_0,...,x_{k})).
\end{equation*}
We provide the following justifications: 
\begin{enumerate}[(a)]
\item Induction hypothesis.
\item $\mu^{(k-1)}|_M = \lambda|_M$. 
\item Changing the space of integration.
\item Smooth measures commute with invariant measures. 
\item $\nu|_{N_{\omega}} = \mu|_{N_{\omega}}$ \qedhere
\end{enumerate}
\end{proof} 

We now observe that we have another proof of the theorem that global measures in NIP theories which are definable and finitely satisfiable are also finitely approximated.   

\begin{corollary} If $T'$ is a countable or uncountable NIP theory and $\mu$ is $\dfs$ over $M$, then $\mu$ is finitely approximated over $M$. 
\end{corollary}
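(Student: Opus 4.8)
The plan is to reduce the statement to the countable-language, countable-model setting handled by the preceding theorem (T2) and then invoke Proposition~\ref{Mazur}. Since finite approximation is checked one formula at a time, it suffices to fix a partitioned $\mathcal{L}'$-formula $\varphi(x,y)$ and an $\epsilon > 0$ and to produce a single $\overline{a} \in (M^{x})^{<\omega}$ with $\sup_{b}|\mu(\varphi(x,b)) - \Av(\overline{a})(\varphi(x,b))| < \epsilon$; the witnesses for different formulas need not come from a common sublanguage, so from now on $\varphi$ and $\epsilon$ are fixed.

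First I would build a countable sublanguage $\mathcal{L}_0 \subseteq \mathcal{L}'$ containing $\varphi$ (and $\varphi^{*}$) so that the reduct measure $\mu_0 := \mu|_{\mathcal{L}_0}$ is $\dfs$ over the reduct model $M|_{\mathcal{L}_0}$ inside the reduct theory $T_0 := \mathrm{Th}_{\mathcal{L}_0}(\mathcal{U}')$. The theory $T_0$ is countable, and it is NIP because reducts of NIP theories are NIP; the reduct $\mathcal{U}'|_{\mathcal{L}_0}$ is again a monster model, since reducts of saturated, homogeneous structures are saturated and homogeneous. Finite satisfiability of $\mu_0$ over $M|_{\mathcal{L}_0}$ is automatic: if $\mu_0(\psi) > 0$ for an $\mathcal{L}_0(\mathcal{U}')$-formula $\psi$, then $\mu(\psi) > 0$, and a witness in $M^{x}$ for $\mu$ is a witness for $\mu_0$ since the underlying set is unchanged. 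To obtain definability I would close $\mathcal{L}_0$ under the definability schema of Fact~\ref{KM:imp}$(v)$: enumerating in $\omega$ stages all formulas available so far, for each partitioned $\mathcal{L}_0$-formula $\psi(x,y)$ and each $n$ I adjoin the finitely many symbols occurring in the witnessing formulas $\psi_1(y),\dots,\psi_k(y) \in \mathcal{L}'_{y}(M)$, and take the union over all stages. The key point is that this closure yields reduct $M$-invariance for free: for an $\mathcal{L}_0$-formula $\psi(x,y)$ the value $\mu(\psi(x,b))$ is, for every $n$, within $1/n$ of $\sum_i r_i\mathbf{1}_{\psi_i}(b)$ with all $\psi_i \in \mathcal{L}_{0,y}(M)$, so $\sum_i r_i \mathbf{1}_{\psi_i}(b)$ depends only on $\tp_{\mathcal{L}_0}(b/M)$, and letting $n \to \infty$ shows $\mu(\psi(x,b))$ does too. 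The same uniform simple-function approximation, read inside $S_{y}^{\mathcal{L}_0}(M)$, gives definability of $\mu_0$ over $M|_{\mathcal{L}_0}$ by Fact~\ref{KM:imp}$(v)$.

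Having arranged that $\mu_0$ is $\dfs$ over $M|_{\mathcal{L}_0}$ in the countable NIP theory $T_0$, I would apply Proposition~\ref{m:countable} to extract a countable elementary submodel $M_1 \prec M|_{\mathcal{L}_0}$ over which $\mu_0$ is still $\dfs$; this is where the delicate transfer of finite satisfiability from a large model to a countable one is packaged (via the argument of \cite{GannNIP} cited in that proposition). Now $\mu_0$ is finitely satisfiable over the countable model $M_1$ in the countable NIP theory $T_0$, so by the preceding theorem $\mu_0$ is sequentially approximated over $M_1$; since $\mu_0$ is also definable over $M_1$, Proposition~\ref{Mazur} upgrades this to finite approximation of $\mu_0$ over $M_1$. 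Applying this to $\varphi$ and $\epsilon$ produces $\overline{a} \in (M_1^{x})^{<\omega} \subseteq (M^{x})^{<\omega}$ with $\sup_{b}|\mu_0(\varphi(x,b)) - \Av(\overline{a})(\varphi(x,b))| < \epsilon$, and because $\varphi$ is an $\mathcal{L}_0$-formula its truth is unchanged between $\mathcal{U}'$ and its reduct, so $\mu_0(\varphi(x,b)) = \mu(\varphi(x,b))$ and the averages agree, giving the required witness for $\mu$. As $\varphi$ and $\epsilon$ were arbitrary, $\mu$ is finitely approximated over $M$. I expect the main obstacle to be exactly this language-reduction step: one must verify that definability of $\mu$ descends to the reduct, and the nonobvious point is that closing the sublanguage under the definability witnesses is precisely what simultaneously forces reduct-invariance, after which definability and the passage to a countable model go through cleanly.
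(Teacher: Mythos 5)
Your argument is correct and follows essentially the same route as the paper: restrict to a countable sublanguage over which $\mu$ remains $\dfs$, pass to a countable model via Proposition~\ref{m:countable}, apply the sequential approximation theorem, and finish with Proposition~\ref{Mazur}. The only difference is that you verify the language-reduction step by hand (closing the sublanguage under the definability witnesses of Fact~\ref{KM:imp}$(v)$), whereas the paper simply cites \cite[Proposition 2.9]{CoGan} for the preservation of $\dfs$ under reducts.
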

\begin{proof} After restricting to a countable language, we still have a $\dfs$ measures (by \cite[Proposition 2.9]{CoGan}).  By Proposition \ref{m:countable}, $\mu$ restricted to this language is $\dfs$ over a countable model, $M_0$. By the previous result, $\mu$ is sequentially approximated over $M_0$. Since $\mu$ is also definable, an application of Proposition \ref{Mazur} yields the result.   
\end{proof}

\begin{observation} Assume that $\mu \in \mathfrak{M}_{x}(\mathcal{U})$ and let $M$ be a small elementary submodel. Then, $\mu$ is sequentially approximated over $M$ if

\begin{enumerate}
\item  $T$ is stable, and $\mu$ is invariant over $M$,
\item  $T$ is NIP, $|M| = \aleph_0$, and $\mu$ is finitely satisfiable over $M$, or
\item  $\mu$ is finitely approximated over $M$.
\end{enumerate}
\end{observation}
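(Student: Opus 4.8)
The plan is to recognize that this final Observation is a pure corollary: each of the three hypotheses has, earlier in the paper, already been shown to imply a stronger property from which sequential approximability over $M$ follows. So rather than running a new argument I would assemble the three implications in the right order.

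Conditions $(2)$ and $(3)$ require essentially no new work. Condition $(3)$ is literally $(iii)$ of Proposition \ref{finitesat}: the encoding argument there produces, from finite approximability over any small model $M$, a sequence in $(M^x)^{<\omega}$ whose average measures converge to $\mu$, with no countability hypothesis on $M$. Condition $(2)$ is a restatement of the main theorem of this section (Theorem T2 of the introduction): for $T$ NIP and $\mu$ finitely satisfiable over a countable $M$, there is a sequence $(\overline a_i)_{i\in\omega}$ in $(M^x)^{<\omega}$ with $\lim_{i\to\infty}\Av(\overline a_i)(\theta(x))=\mu(\theta(x))$ for every $\theta(x)\in\mathcal{L}_x(\mathcal{U})$, which is exactly $\lim_{i\to\infty}\Av(\overline a_i)=\mu$ in $\mathfrak{M}_x(\mathcal{U})$.

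For condition $(1)$ I would reduce to $(3)$. The crucial input is the standard stable-theory fact that an $M$-invariant measure is automatically both definable and finitely satisfiable over $M$: in a stable theory there is no forking, so the invariant extension of $\mu|_M$ to $\mathcal{U}$ is unique and is simultaneously the (definable) heir and the (finitely satisfiable) coheir of $\mu|_M$, exactly as for invariant types. Since a stable theory is NIP, Fact \ref{KM:imp}$(vii)$ then promotes ``definable and finitely satisfiable over $M$'' to ``finitely approximated over $M$,'' and condition $(3)$ completes the argument, with no appeal to $(2)$ and no countability restriction on $M$.

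The only step that is not bookkeeping is the stable-theory input used for $(1)$, namely that invariance over $M$ forces definability and finite satisfiability over $M$, so I expect that to be the (mild) main obstacle. The cleanest way to discharge it is to cite the standard fact that in a stable theory every invariant measure is finitely approximated (equivalently, is its own unique nonforking extension), after which $(1)$ collapses into $(3)$.
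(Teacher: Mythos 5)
Your proposal is correct and matches the paper's intent: the measure observation is the exact analogue of the type observation in Section 4, with $(3)$ being $(iii)$ of Proposition \ref{finitesat}, $(2)$ being the main theorem of Section 5, and $(1)$ reducing to $(3)$ via the standard fact that in a stable theory an $M$-invariant measure is definable and finitely satisfiable over $M$, hence finitely approximated over $M$ by $(vii)$ of Fact \ref{KM:imp}. No gaps.
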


Finally, one may ask what happens in the local context. We remark that there exists two proofs for a local version of Theorem T2 which both rely on an important result of Bourgain, Fremlin, and Talagrand whose connection to model theory is (by now) well-known (e.g. \cite{IBFT,SimonBFT, Khanaki1,GannNIP}). Chronologically, the first proof of the following theorem is implicit in the work of Khanaki (see \cite[Remark 3.21, Theorem 3.26]{Khanaki1}) (through the observation that measures are types over models of the randomization in continuous model theory and \cite[Proposition 1.1]{Ben2}), 

\begin{theorem}\label{Khanaki}Suppose $\mu$ is a Keisler measure in $\mathfrak{M}_{\varphi}(\mathcal{U})$, $\mu$ is finitely satisfiable over $M$ where $|M| = \aleph_0$, and $\varphi(x,y)$ is an NIP formula. Then there exists a sequence of points $(\overline{a}_{i})_{i \in \omega}$ in $(M^{x})^{< \omega}$ such that for each $b \in \mathcal{U}^{y}$, 
\begin{equation*}\lim_{i \to \infty} \Av(\overline{a}_i)(\varphi(x,b)) = \mu(\varphi(x,b)).
\end{equation*}
\end{theorem}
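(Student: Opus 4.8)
The plan is to deduce the statement from the Bourgain--Fremlin--Talagrand (BFT) theorem after first reducing everything to a Polish parameter space. Set $\psi = \varphi^{*}$ and $X = S_{\psi}^{y}(M)$; since $M$ and $\mathcal{L}$ are countable, $X$ is a compact metrizable, hence Polish, space. For any $\overline{a} \in (M^{x})^{<\omega}$ the quantity $\Av(\overline{a})(\varphi(x,b))$ depends only on the $\psi$-type of $b$ over $M$, so each average induces a continuous (indeed locally constant) function $g_{\overline{a}} : X \to [0,1]$, and likewise each instance induces $\varphi(a,\cdot)|_{X} : X \to \{0,1\}$. First I would check that $\mu$ also factors through $X$: since $\mu$ is finitely satisfiable over $M$, the local analogue of Fact \ref{Avcls} places $\mu$ in the closure of $\{\Av(\overline{a}) : \overline{a} \in (M^{x})^{<\omega}\}$ in $\mathfrak{M}_{\varphi}(\mathcal{U})$, and for any $b,b'$ with the same $\psi$-type over $M$ the set $\{\nu : \nu(\varphi(x,b)) = \nu(\varphi(x,b'))\}$ is closed and contains every average, so $\mu(\varphi(x,b)) = \mu(\varphi(x,b'))$. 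Thus $\mu$ induces a well-defined $g_{\mu} : X \to [0,1]$ lying in the pointwise closure on $X$ of $\mathcal{F} := \{g_{\overline{a}}\}$. The payoff of this reduction is that pointwise convergence $g_{\overline{a}_{i}} \to g_{\mu}$ on $X$ is \emph{exactly} the desired convergence $\Av(\overline{a}_{i})(\varphi(x,b)) \to \mu(\varphi(x,b))$ for every $b \in \mathcal{U}^{y}$, precisely because both sides factor through $X$.

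Second, I would bring in NIP through BFT. Let $\mathcal{F}_{0} = \{\varphi(a,\cdot)|_{X} : a \in M^{x}\}$, a countable family of continuous $\{0,1\}$-valued functions on $X$. The combinatorial content of NIP for $\varphi$ is precisely that $\mathcal{F}_{0}$ contains no independent subsequence, which by Rosenthal's dichotomy forces every cluster point of $\mathcal{F}_{0}$ in $[0,1]^{X}$ to be of Baire class $1$; hence the pointwise closure $\overline{\mathcal{F}_{0}}$ is a Rosenthal compactum, and is angelic by BFT. Since each $g_{\overline{a}}$ is a rational convex combination of members of $\mathcal{F}_{0}$, the averages are dense in $\conv(\mathcal{F}_{0})$, so $g_{\mu} \in \overline{\conv}(\mathcal{F}_{0})$. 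Invoking the theorem that the closed convex hull of a Rosenthal compactum is again Rosenthal (hence angelic), I conclude that $\overline{\conv}(\mathcal{F}_{0}) = \overline{\{g_{\overline{a}}\}}$ is angelic, so the point $g_{\mu}$ in this closure is the pointwise limit of a sequence $(g_{\overline{a}_{i}})_{i \in \omega}$. Translating back through the first step yields the required $(\overline{a}_{i})_{i \in \omega}$ in $(M^{x})^{<\omega}$.

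The main obstacle I anticipate is the functional-analytic input in the second step: namely (i) the equivalence between NIP of $\varphi$ and the ``no independent subsequence'' hypothesis of Rosenthal's theorem for the $\{0,1\}$-family $\mathcal{F}_{0}$ on $X$, and (ii) the stability of the Rosenthal/angelic property under closed convex hulls, which is exactly what lets me pass from the $\{0,1\}$-functions, where NIP lives, to the averaged functions, where $\mu$ lives. Everything else --- the reduction to the Polish space $X$, the well-definedness of $g_{\mu}$, and the identification of pointwise convergence on $X$ with the stated convergence --- is soft and uses only finite satisfiability and a closure argument, with no appeal to NIP. An alternative route, matching the remark that the result is implicit in Khanaki's work, would be to pass to the Ben Yaacov randomization: $\mu$ becomes a $\varphi^{R}$-type finitely satisfiable over the countable model $M^{R}$, the formula $\varphi^{R}$ is NIP because the randomization of an NIP theory is NIP, and one then applies a local, type-level sequential approximation (a $\varphi$-version of Theorem \ref{sim:conv}, again via BFT) inside the randomized structure and reads the conclusion back as a statement about averages of $\varphi$-instances.
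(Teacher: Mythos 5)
Your argument is essentially correct, but note that the paper does not actually give a proof of Theorem \ref{Khanaki}: it defers to two external arguments, one implicit in Khanaki's work (measures as types in the randomization, plus BFT) and one in \cite[Lemma 4.7]{GannNIP} (via the VC theorem). Your main route is a direct, de-randomized version of the first: you work on the compact metrizable space $S_{\varphi^{*}}(M)$ rather than a type space of the randomization, and the soft reductions you describe are exactly the machinery the paper records in Section 6 --- well-definedness of $g_{\mu}$ is Fact \ref{local:facts}$(i)$, membership of $F_{\mu,M}^{\varphi}$ in $\cl(\conv(\mathbb{F}_{M}))$ is Fact \ref{local:facts}$(iii)$, and your conclusion is Fact \ref{local:facts}$(iv)$ up to replacing convex combinations by averages (legitimate, since averages are uniformly dense in the convex hull). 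The genuine content sits exactly where you locate it. For your input (i), one silently uses that $\varphi$ is NIP iff $\varphi^{*}$ is, and that every type in $S_{\varphi^{*}}(M)$ is realized in $\mathcal{U}$, so an independent subsequence of $\mathcal{F}_{0}$ on $X$ really does violate NIP; both points are standard. For your input (ii), a small caution: the closed convex hull of a pointwise compact set of Baire class $1$ functions need not remain inside the Baire class $1$ functions, so the correct citation is the BFT convex hull theorem in the form ``$\conv(\mathcal{F}_{0})$ is relatively sequentially compact in the space of Baire-measurable functions and every point of its pointwise closure is the limit of a sequence from $\conv(\mathcal{F}_{0})$''; that weaker sequential-density statement is all your argument uses. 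The proof the paper actually leans on, \cite[Lemma 4.7]{GannNIP}, buys its way past this convex hull theorem by invoking the VC theorem instead, while your randomization alternative is essentially Khanaki's original route. Any of the three closes the argument.
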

\noindent There is another proof for the case of just Keisler measures via the VC theorem (see \cite[Lemma 4.7]{GannNIP}) which came later. 

\subsection{Smooth sequences and generically stable measures in NIP theories}

We now give an equivalent characterization for generically stable measures in NIP theories. We invite the reader to review the definition of a generically stable type prior to reading this section. Recall the following theorem due to Hrushovski, Pillay, and Simon \cite[Theorem 3.2]{HPS}. 

\begin{theorem}[T is NIP]\label{genstab:equiv} Assume that $\mu \in \mathfrak{M}_{x}(\mathcal{U})$. Then the following are equivalent. 
\begin{enumerate}[($i$)]
\item $\mu$ is dfs. 
\item $\mu$ is finitely approximated. 
\item $\mu$ is fim (see \cite[Definition 2.7]{HPS}).
\item $\mu$ is invariant and $\mu_{x} \otimes \mu_{y} = \mu_{y} \otimes \mu_{x}$. 
\end{enumerate}
Moreover, a Keisler measure (in an NIP theory) is called \textbf{generically stable} if it satisfies any/all of $(i) - (iv)$. 
\end{theorem}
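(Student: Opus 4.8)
The plan is to prove the square of equivalences by combining the easy implications already recorded in the preliminaries with the single genuinely hard direction. Several arrows are essentially free in our setting. The implication $(ii)\Rightarrow(i)$ is immediate from $(iii)$ of Fact \ref{KM:imp}, since a finitely approximated measure is automatically definable and finitely satisfiable over the witnessing model; the reverse $(i)\Rightarrow(ii)$ is exactly $(vii)$ of Fact \ref{KM:imp} (equivalently, it is re-proven as the Corollary following our main theorem, which bypasses the citation of randomizations in favor of smooth sequences). Thus $(i)\Leftrightarrow(ii)$ needs no new argument. For $(i)\Rightarrow(iv)$, note first that a $\dfs$ measure is in particular $M$-invariant, which is the first half of $(iv)$; then apply $(b)$ of Fact \ref{KM:imp2}, taking the $\dfs$ measure to be $\mu$ in the variable $x$ and the invariant measure to be $\mu$ in the variable $y$, to conclude $\mu_x \otimes \mu_y = \mu_y \otimes \mu_x$.

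This reduces everything to inserting $\fim$ into the cycle and closing it with $(iv)\Rightarrow(i)$, so I would establish the chain $(ii)\Rightarrow(iii)\Rightarrow(iv)\Rightarrow(i)$. For $(ii)\Rightarrow(iii)$ I would run the frequency-interpretation argument: a finitely approximated (hence $\dfs$) measure is $M$-invariant and, under NIP, $\mu^{(\omega)}$ is well-defined and $M$-indiscernible by $(ii)$ of Fact \ref{HPSFact}. Applying the VC/BFT law-of-large-numbers input (the same machinery underlying Theorem \ref{Khanaki}) to the NIP family $\{\varphi(x,b):b\in\mathcal{U}^{y}\}$, one shows that for each $\varphi$ and each $\epsilon$ the set of $n$-tuples whose average $\epsilon$-approximates $\mu$ uniformly in $b$ has $\mu^{(n)}$-measure tending to $1$; this is precisely the witnessing data for $\fim$. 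The implication $(iii)\Rightarrow(iv)$ is then soft: the $\fim$ approximations force invariance, and because the approximating averages are symmetric in their coordinates they force the Fubini symmetry $\mu_x \otimes \mu_y = \mu_y \otimes \mu_x$.

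The hard part will be $(iv)\Rightarrow(i)$, upgrading ``invariant and self-commuting'' to ``$\dfs$''. Here I would first use NIP and invariance to obtain Borel-definability ($(vi)$ of Fact \ref{KM:imp}), so that $\mu^{(\omega)}$ is defined and $M$-indiscernible. The self-commuting hypothesis is then leveraged to promote this $M$-indiscernible measure to a fully symmetric (exchangeable) one: Fubini symmetry of $\mu$ against itself propagates through the Morley product to total indiscernibility of $\mu^{(\omega)}$. The crux is then a de Finetti–style argument combined with the VC theorem, showing that an exchangeable, NIP $\mu^{(\omega)}$ must have its finite marginals approximate $\mu$ uniformly in the parameter $b$. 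This simultaneously recovers finite satisfiability (the approximating samples live in arbitrarily small submodels) and definability (the uniform convergence makes each $F_{\mu}^{\varphi}$ continuous), yielding $\dfs$ and closing the cycle.

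The single real obstacle is this last exchangeability/VC step, and it is exactly where NIP is indispensable and where the argument is not purely first-order. As flagged in the discussion preceding Fact \ref{HPSFact}, the cleanest route passes through Ben Yaacov's randomizations, using that the randomization of an NIP theory is again NIP so that the relevant measure-valued families remain VC; this is the non-trivial content borrowed from \cite{HPS} and \cite{Ben}. Every other implication in the square is either bookkeeping with invariance and Morley products or a direct appeal to the facts assembled above, so the entire weight of Theorem \ref{genstab:equiv} sits in turning the abstract symmetry of $(iv)$ into the concrete uniform approximation demanded by $(i)$–$(iii)$.
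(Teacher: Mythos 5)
This statement is not proved in the paper at all: it is recalled verbatim as a theorem of Hrushovski--Pillay--Simon, with the citation \cite[Theorem 3.2]{HPS} standing in for the argument, so there is no internal proof to compare yours against. Measured against that, your proposal is a reasonable reconstruction of the architecture of the \cite{HPS} proof. The bookkeeping arrows are dispatched correctly: $(ii)\Rightarrow(i)$ is indeed $(iii)$ of Fact \ref{KM:imp}; $(i)\Rightarrow(iv)$ is indeed invariance plus $(b)$ of Fact \ref{KM:imp2} applied with $\nu=\mu_y$ (together with associativity, $(a)$ of Fact \ref{KM:imp2}, if one wants full exchangeability of $\mu^{(n)}$ rather than just pairwise symmetry); and $(iii)\Rightarrow(ii)$ is essentially immediate from the definition of $\fim$, which gives a cleaner route to $(iv)$ than your ``soft'' symmetry argument (which, as written, requires an interchange of limits that is only justified once uniform approximation is in hand).

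The substantive criticism is that the two load-bearing implications are not proved but re-cited. For $(i)\Leftrightarrow(ii)$ you invoke $(vii)$ of Fact \ref{KM:imp}, which is itself \cite[Proposition 3.2]{HPS}, i.e.\ part of the very package being established; your alternative via the Corollary in Section 5 is legitimate but still routes through Fact \ref{HPSFact}, hence through Ben Yaacov's randomizations \cite{Ben}. Likewise your $(iv)\Rightarrow(i)$ ends at ``a de Finetti--style argument combined with the VC theorem,'' which is precisely the content of \cite{HPS} that the paper chooses to import rather than reprove. One concrete gap worth flagging in your $(ii)\Rightarrow(iii)$ step: knowing that the set of $n$-tuples whose averages $\epsilon$-approximate $\mu$ uniformly in $b$ has $\mu^{(n)}$-measure tending to $1$ is \emph{not} yet the witnessing data for $\fim$, because that set is merely closed in $S_{x_1\ldots x_n}(\mathcal{U})$ and a closed set of large measure need not contain a \emph{definable} $\theta_n$ of large measure; extracting the formulas $\theta_n$ is exactly where the VC theorem must be applied with more care than your sketch records. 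So the proposal correctly locates where the difficulty lives but, like the paper, ultimately defers it to \cite{HPS}.
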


We will now show that smooth sequences can also give a characterization of generically stable measures in NIP theories.

\begin{lemma}[T is NIP] Let $\mu \in \mathfrak{M}_{x}(\mathcal{U})$. Suppose that $\mu$ is generically stable over $M$. For any smooth sequence $(\mu_i,N_i)_{i \in \omega}$ in $\mu$ over $M$, we have that $\lim_{i \to \infty} \mu_i = \mu$ in $\mathfrak{M}_x(\mathcal{U})$. 
\end{lemma}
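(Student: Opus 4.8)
The plan is to prove the statement in two stages: first show that the sequence $(\mu_i)_{i\in\omega}$ converges to \emph{some} global measure $\nu$, and then identify $\nu$ with $\mu$ using the $\fim$ characterization of generic stability from Theorem \ref{genstab:equiv}. The overall shape mirrors the two-claim structure of Lemma \ref{gs:lemma} for generically stable types.

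For convergence, set $\lambda = \bigotimes_{i=0}^{\omega}\mu_i \in \mathfrak{M}_{(x_i)_{i\in\omega}}(\mathcal{U})$. By Proposition \ref{smoothinv} we have $\lambda|_M = \mu^{(\omega)}|_M$, so $\lambda$ is $M$-indiscernible. The marginal of the Morley product $\lambda$ on the single coordinate $x_i$ is exactly $\mu_i$, so for each $\varphi(x,b)\in\mathcal{L}_x(\mathcal{U})$ we have $\mu_i(\varphi(x,b)) = \lambda(\varphi(x_i,b))$. By $(i)$ of Fact \ref{HPSFact}, $\lim_{i\to\infty}\lambda(\varphi(x_i,b))$ exists; hence $\lim_{i\to\infty}\mu_i(\varphi(x,b))$ exists for every formula. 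By the elementary fact preceding Fact \ref{HPSFact}, the sequence $(\mu_i)_{i\in\omega}$ converges to some $\nu\in\mathfrak{M}_x(\mathcal{U})$.

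To identify $\nu$ with $\mu$, fix $\varphi(x,y)$ and $b\in\mathcal{U}^y$, and write $r = \nu(\varphi(x,b)) = \lim_i \mu_i(\varphi(x,b))$ and $s = \mu(\varphi(x,b))$. Since $\mu$ is generically stable over $M$, by Theorem \ref{genstab:equiv} it is $\fim$ over $M$: for $\varphi$ there are $\mathcal{L}(M)$-formulas $\theta_n(x_0,\dots,x_{n-1})$ with $\lim_n \mu^{(n)}(\theta_n)=1$ such that for every $\epsilon>0$ there is $n_\epsilon$ with $\sup_{b}|\Av(\abar)(\varphi(x,b)) - \mu(\varphi(x,b))| < \epsilon$ for all $n\ge n_\epsilon$ and all $\abar \models \theta_n$. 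Because $\theta_n\in\mathcal{L}_{x_0,\dots,x_{n-1}}(M)$ and $\lambda|_M = \mu^{(\omega)}|_M$, we have $\lambda(\theta_n) = \mu^{(n)}(\theta_n)$. Integrating the simple function $\Av(\xbar)(\varphi(x,b)) = \frac{1}{n}\sum_{j<n}\mathbf{1}_{\varphi(x_j,b)}$ against $\lambda$ gives $\frac{1}{n}\sum_{j<n}\mu_j(\varphi(x,b))$, and splitting the integral over $[\theta_n]$ and its complement yields, for $n \ge n_\epsilon$,
\[
\Big|\tfrac{1}{n}\sum_{j<n}\mu_j(\varphi(x,b)) - s\Big| \;\le\; \epsilon + \big(1 - \mu^{(n)}(\theta_n)\big).
\]
Letting $n\to\infty$, the left-hand side tends to $|r-s|$, since Cesàro means of the convergent sequence $\mu_j(\varphi(x,b))\to r$ also converge to $r$, while the right-hand side tends to $\epsilon$. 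As $\epsilon>0$ was arbitrary, $r=s$. Since $\varphi$ and $b$ were arbitrary, $\nu=\mu$, and therefore $\lim_{i\to\infty}\mu_i = \mu$ in $\mathfrak{M}_x(\mathcal{U})$.

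The main obstacle is exactly the passage from "the limit $\nu$ exists" to "$\nu=\mu$". The defining relations $\mu_i|_{N_{i-1}} = \mu|_{N_{i-1}}$ only pin down $\mu_i$ on the growing countable models $N_i$, whereas a global instance $\varphi(x,b)$ may have $b\notin N_\omega$, so no individual term $\mu_i(\varphi(x,b))$ is forced to equal $\mu(\varphi(x,b))$. The resolution is to stop trying to control individual terms and instead control their Cesàro averages: the $\fim$ approximation does precisely this, and it transfers to the $\mu_i$ through the $M$-indiscernible measure $\lambda$ using only the identity $\lambda|_M = \mu^{(\omega)}|_M$. I expect verifying that the $\fim$ formulas $\theta_n$ can be taken over $M$ (so that $\lambda(\theta_n)=\mu^{(n)}(\theta_n)$) and that the marginal identity $\mu_i(\varphi(x,b))=\lambda(\varphi(x_i,b))$ holds to be the only points requiring care; both are routine given the definitions in place.
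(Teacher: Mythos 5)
Your proof is correct, and while the first half coincides with the paper's argument, the second half takes a genuinely different route. Like the paper, you obtain the existence of the limit $\nu$ from Proposition \ref{smoothinv} together with $(i)$ of Fact \ref{HPSFact}, reading off $\mu_i(\varphi(x,b))$ as the marginal $\lambda(\varphi(x_i,b))$ of the $M$-indiscernible measure $\lambda=\bigotimes_{i=0}^{\omega}\mu_i$. For the identification $\nu=\mu$, the paper instead shows $\nu$ is finitely satisfiable over $N_{\omega}$, reduces via $(iii)$ of Fact \ref{HPSFact} to proving $\nu^{(\omega)}|_{N_{\omega}}=\mu^{(\omega)}|_{N_{\omega}}$, and runs an induction on $k$ in which $F_{\mu^{(k-1)}}^{\theta^*}$ is uniformly approximated by simple functions (using definability of $\mu^{(k-1)}$ over $N_\omega$, i.e.\ $(v)$ of Fact \ref{KM:imp}) and generically stable measures are commuted past invariant ones. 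You bypass the induction, the reduction through $\nu^{(\omega)}$, and the commutation lemma entirely by invoking the $\fim$ clause of Theorem \ref{genstab:equiv}: integrating the simple function $\frac{1}{n}\sum_{j<n}\mathbf{1}_{\varphi(x_j,b)}$ against $\lambda$ over $[\theta_n]$ and its complement controls the Ces\`aro means of $\mu_j(\varphi(x,b))$, which suffices since the sequence is already known to converge. This is shorter and works one instance $\varphi(x,b)$ at a time, at the cost of leaning on the (externally cited, nontrivial) implication that generic stability gives $\fim$, whereas the paper's induction uses only definability and commutation. Two small points you correctly anticipated but should make explicit: the $\fim$ formulas $\theta_n$ must be over $M$ itself so that $\lambda(\theta_n)=\mu^{(n)}(\theta_n)$ via Proposition \ref{smoothinv} (this is what ``$\fim$ over $M$'' in Theorem \ref{genstab:equiv} provides), and the value of $\Av(\xbar)(\varphi(x,b))$ at a type $q\in[\theta_n]\subseteq S_{\xbar}(N)$ (for $N$ containing $M$ and $b$) depends only on $q$, so the pointwise bound on $[\theta_n]$ is legitimate. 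Neither is an obstacle.
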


\begin{proof} Since $(\mu_i,N_i)_{i \in \omega}$ is a smooth sequence in $\mu$ over $M$, the measure $\bigotimes_{i=0}^{\omega} \mu_i$ is indiscernible over $M$ by Proposition \ref{smoothinv}. By $(i)$ of Fact \ref{HPSFact}, we know that $\lim_{i \to \infty} \mu_{i} = \nu$ for some $\nu \in \mathfrak{M}_{x}(\mathcal{U})$. Since each $\mu_i$ is finitely satisfiable over $N_i$, it follows that $\nu$ is finitely satisfiable over $N_{\omega}$. By $(iii)$ of Fact \ref{HPSFact}, it is enough to show that $\nu^{(\omega)}|_{N_{\omega}} = \mu^{(\omega)}|_{N_{\omega}}$. The base case is trivial. Assume that $\nu^{(k-1)}|_{N_{\omega}} = \mu^{(k-1)}|_{N_{\omega}}$. Fix $\varphi(x_0,...,x_k) \in \mathcal{L}_{x_0,...,x_k}(N_{\omega})$ and $\epsilon > 0$. Let $\overline{x} = (x_0,...,x_{k-1})$ and $\theta(x_k;\overline{x}) = \varphi(x_0,...,x_k)$. Since $\mu$ is generically stable over $M$, $\mu^{(k-1)}$ is generically stable over $M$ ($(v)$ of Fact \ref{KM:imp2}) and so also definable over $N_{\omega}$. Therefore by $(v)$ of Fact \ref{KM:imp}, there exists formulas $\psi_1(x_{k}),...,\psi_{n}(x_{k}) \in \mathcal{L}_{x_{k}}({N_{\omega}})$ and real numbers $r_1,...,r_n \in [0,1]$ so that 

\begin{equation*}
\sup_{q \in S_{x_{k}}(N_{\omega})} | F_{\mu^{(k-1)}}^{\theta^{*}}(q) - \sum_{i=1}^{n} r_i \mathbf{1}_{\psi_i(x_k)}(q)| < \epsilon. 
\end{equation*} 
Consider the following sequence of equations followed by a short list of justifications.

\begin{equation*}
    \nu^{(k)}(\varphi(x_0,...,x_{k})) = \int_{S_{\bar{x}}(N_{\omega})} F_{\nu}^{\theta} d( \nu^{(k-1)}|_{N_{\omega}}) \overset{(a)}{=} 
    \int_{S_{\bar{x}}(N_{\omega})} F_{\nu}^{\theta} d(\mu^{(k-1)}|_{N_{\omega}})
\end{equation*}
\begin{equation*}
    \overset{(b)}{=} \int_{S_{x_{k}}(N_{\omega})} F_{\mu^{(k-1)}}^{\theta^{*}} d(\nu|_{N_{\omega}}) \approx_{\epsilon} \int_{S_{x_{k}}(N_{\omega})} \sum_{i=1}^{n} r_i \mathbf{1}_{\psi_{i}(x_{k})} d(\nu|_{N_{\omega}})
\end{equation*}
\begin{equation*}
    = \sum_{i=1}^{n} r_{i} \nu(\psi_{i}(x_{k})) \overset{(c)}{=} \sum_{i=1}^{n} r_{i} \mu(\psi_{i}(x_{k})) = \int_{S_{x_{k}}(N_{\omega})} \sum_{i=1}^{n} r_i \mathbf{1}_{\psi_{i}(x_{k})} d(\mu|_{N_{\omega}})
\end{equation*}
\begin{equation*}
    \approx_{\epsilon} \int_{S_{x_{k}}(N_{\omega})} F_{\mu^{(k-1)}}^{\theta^{*}} d(\mu|_{N_{\omega}}) \overset{(b)}{=} \int_{S_{x_{k}}(N_{\omega})} F_{\mu}^{\theta} d(\mu^{(k-1)}|_{N_{\omega}}) =  \mu^{(k)}(\varphi(x_0,...,x_{k})). 
\end{equation*}
\begin{enumerate}[(a)]
\item Induction hypothesis. 
\item (T is NIP) Generically stable measures commute with invariant measures (see $(b)$ of Fact \ref{KM:imp2}). 
\item Base case.
\end{enumerate}
As $\epsilon$ was arbitrary, this proves the result.
\end{proof}

\begin{lemma}[T is NIP] Assume that $\mu$ is $M$-invariant. If for every smooth sequence $(\mu_{i},N_i)_{i \in \mathbb{N}}$ in $\mu$ over $M$, we have that $\lim_{i \to \infty} \mu_{i} = \mu$, then $\mu$ is generically stable over $M$.
\end{lemma}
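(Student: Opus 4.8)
The plan is to show that $\mu$ is $\dfs$, which by Theorem \ref{genstab:equiv} is equivalent to generic stability in the NIP setting. First I would fix, using Proposition \ref{existence}, a smooth sequence $(\mu_i,N_i)_{i\in\omega}$ in $\mu$ over $M$ with each $N_i$ countable, so that $N_{\omega}$ is a countable model; by hypothesis $\lim_{i\to\infty}\mu_i=\mu$ in $\mathfrak{M}_x(\mathcal{U})$.

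The first, clean half of the argument establishes finite satisfiability, and this is where the hypothesis really enters. Each $\mu_i$ is smooth over $N_i$, hence finitely satisfiable over $N_i$ and a fortiori over $N_{\omega}$. Since the collection of measures finitely satisfiable over the fixed model $N_{\omega}$ is a closed subset of $\mathfrak{M}_x(\mathcal{U})$, its limit $\mu$ is finitely satisfiable over $N_{\omega}$. Thus $\mu$ is finitely satisfiable over a countable model, and by Theorem T2 (the main theorem of this section) $\mu$ is sequentially approximated over $N_{\omega}$. By Proposition \ref{Mazur} it now suffices to prove that $\mu$ is definable over $N_{\omega}$; equivalently, by Theorem \ref{genstab:equiv}, it suffices to establish the self-commutation $\mu_x\otimes\mu_y=\mu_y\otimes\mu_x$.

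For the remaining half I would exploit that each $\mu_i$, being smooth, commutes both with the ($M$-invariant, hence Borel-definable) measure $\mu$ and with every other $\mu_j$ (Fact \ref{KM:imp2}$(i)$). Writing these commutations as integral identities and letting $i\to\infty$, one ordering is always a clean dominated-convergence limit: integrating the continuous functions $F_{\mu_i}^{\varphi}$ against the fixed measure $\mu$ (or $\mu_j$), and using that $F_{\mu_i}^{\varphi}\to F_{\mu}^{\varphi}$ pointwise, one recovers $(\mu_x\otimes\mu_y)(\varphi)$ with $\mu_i$ placed in $x$ and $(\mu_y\otimes\mu_x)(\varphi)$ with $\mu_i$ placed in $y$. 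The hard part will be matching these two orderings: the symmetric computation forces one to integrate the a priori only Baire-$1$ function $F_{\mu}^{\varphi}$ against the converging sequence $(\mu_i)$, and weak convergence $\mu_i\to\mu$ does not by itself justify passing the limit through such an integral. This is precisely the modes-of-convergence obstruction flagged in the introduction, and it is the main obstacle. I expect to resolve it in the spirit of the rest of the paper: pass to a subsequence that is eventually indiscernible over $N_{\omega}$ (Proposition \ref{correct}), so that by Proposition \ref{converge} and Fact \ref{HPSFact} the relevant double array of products collapses to a single well-defined limit; combined with Proposition \ref{smoothinv}, which identifies $\bigotimes_{i}\mu_i|_{N_{\omega}}$ with $\mu^{(\omega)}|_{N_{\omega}}$, this should pin both orders of $\mu\otimes\mu$ to the same value and yield self-commutation. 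Once $\mu$ is both definable and finitely satisfiable over $N_{\omega}$ it is $\dfs$, hence generically stable; a standard descent using invariance over $M$ (the measure analogue of Fact \ref{gfs:facts} together with Proposition \ref{m:countable}) then gives generic stability over $M$.
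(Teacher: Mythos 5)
You have correctly reduced the problem to the self-commutation $\mu_x\otimes\mu_y=\mu_y\otimes\mu_x$ (via Theorem \ref{genstab:equiv}) and correctly identified the clean dominated-convergence direction, but the step you flag as ``the main obstacle'' is exactly the step you have not proved, and the resolution you sketch is not the right one. The machinery you invoke --- eventually indiscernible subsequences (Proposition \ref{correct}), Proposition \ref{converge}, Fact \ref{HPSFact} --- is built for sequences of \emph{tuples} and their associated average measures, not for a sequence of smooth measures, and it is not clear how ``the double array collapses'' would be made precise. More importantly, no such machinery is needed, because the obstruction you describe does not actually arise. Fix $\varphi(x,y)$ and a small model $M_1\succ M$ containing its parameters, and take a smooth sequence $(\mu_{i,x},N_i)_{i\in\omega}$ in $\mu_x$ over $M_1$ (not merely over $M$). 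After applying the smooth-commutes-with-Borel-definable identity $\mu_{i,x}\otimes\mu_y=\mu_y\otimes\mu_{i,x}$ (Fact \ref{KM:imp2}$(i)$), the right-hand side is computed as $\int_{S_x(M_1)}F_{\mu_y}^{\varphi^*}\,d(\mu_{i,x}|_{M_1})$: since $\mu_y$ is invariant over $M_1$ and the parameters of $\varphi$ lie in $M_1$, the space of integration can be shrunk to $S_x(M_1)$. But clause $(iii)$ of the definition of a smooth sequence gives $\mu_{i,x}|_{M_1}=\mu_x|_{M_1}$ for \emph{every} $i$, so this integral is constant in $i$ and equals $\mu_y\otimes\mu_x(\varphi(x,y))$ on the nose. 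There is no Baire-$1$ function to integrate against a weakly converging sequence of measures; the only limit actually taken is the dominated-convergence limit on the other side, which you already have. This observation is the content of the paper's proof and is the idea missing from your proposal.

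Separately, the first half of your argument (finite satisfiability over $N_\omega$, Theorem T2, Proposition \ref{Mazur}) is sound but superfluous: once self-commutation is established, $M$-invariance together with Theorem \ref{genstab:equiv}$(iv)$ already yields generic stability, and invariance over $M$ then gives generic stability over $M$ without any descent argument.
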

\begin{proof}

Since $T$ is NIP, all invariant measures are Borel definable. By Theorem \ref{genstab:equiv}, it suffices to show that $\mu$ commutes with itself, i.e. $\mu_x \otimes \mu_y = \mu_y \otimes \mu_x$. Fix $\varphi(x,y) \in \mathcal{L}_{x,y}(\mathcal{U})$. Let $M_1$ be a small model such that $ M \prec M_1$ and $M_1$ contains all the parameters from $\varphi(x,y)$. We choose a smooth sequence $(\mu_{i,x}; N_i)_{i \in \omega}$ in $\mu_{x}$ over $M_1$ and let $N_\omega = \bigcup_{i \in \omega} N_i$. By construction, the sequence $(\mu_{i,x},N_i)_{i \in \omega}$ is a smooth sequence in $\mu_{x}$ over $M$. Consider the following computation.

\begin{equation*}
\mu_{x}\otimes\mu_{y}(\varphi(x,y))=
\int_{S_{y}(M_{1})}F_{\mu_{x}}^{\varphi}d(\mu_{y}|_{M_{1}}) \overset{(a)}{=}
\int_{S_{y}(N_{\omega})}F_{\mu_{x}}^{\varphi}d(\mu_{y}|_{N_{\omega}}) 
\end{equation*}
\begin{equation*}
\overset{(b)}{=}\lim_{i\to\infty}\int_{S_{y}(N_{\omega})}F_{\mu_{i,x}}^{\varphi}d(\mu_{y}|_{N_{\omega}})
\overset{(c)}{=} \lim_{i\to\infty}\int_{S_{x}(N_{\omega})}F_{\mu_{y}}^{\varphi^{*}}d(\mu_{i,x}|_{N_{\omega}})
\end{equation*}
\begin{equation*} 
\overset{(d)}{=} \lim_{i \to \infty} \int_{S_{x}(M_{1})} F_{\mu_y}^{\varphi^{*}} d(\mu_{i,x}|_{M_1})  \overset{(e)}{=} \lim_{i \to \infty} \int_{S_{x}(M_1)} F_{\mu_{y}}^{\varphi^{*}} d(\mu_{x}|_{M_1})
\end{equation*} 
\begin{equation*} = \int_{S_{x}(M_1)} F_{\mu_{y}}^{\varphi^{*}} d(\mu_{x}|_{M_1}) = \mu_{y} \otimes \mu_{x} (\varphi(x,y)). 
\end{equation*}

\noindent We provide a list of the following justifications:
\begin{enumerate}[$(a)$]
\item Changing the space of integration. 
\item Dominated convergence theorem. 
\item Smooth measures commute with Borel definable measures.
\item Since $\mu_{y}$ is $M_1$ invariant. 
\item Since $\mu_{i,x}|_{M_{1}} = \mu_{x}|_{M_1}$ for any $i \in \omega$. \qedhere
\end{enumerate} 
\end{proof}

\begin{theorem}[T is NIP] Let $\mu \in \mathfrak{M}_{x}(\mathcal{U})$. Then the following are equivalent: 
\begin{enumerate} 
\item $\mu$ is generically stable over $M$. 
\item For any smooth sequence $(\mu_i,N_i)_{i \in \omega}$ in $\mu$ over $M$,
\begin{equation*} \lim_{i \to \infty} \mu_i = \mu \text{ in $\mathfrak{M}_{x}(\mathcal{U})$.}
\end{equation*} 
\end{enumerate} 
\end{theorem}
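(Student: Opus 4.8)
The plan is to assemble the equivalence directly from the two lemmas that immediately precede it; the theorem is really just the packaging of those results into a single characterization, so I would not reprove anything. The forward and backward implications are each handled by one lemma, and the only point needing a remark is the bookkeeping around invariance.

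For the direction $(1) \Rightarrow (2)$, I would invoke the first of the two preceding lemmas verbatim: if $\mu$ is generically stable over $M$, then for any smooth sequence $(\mu_i,N_i)_{i \in \omega}$ in $\mu$ over $M$ we have $\lim_{i \to \infty} \mu_i = \mu$ in $\mathfrak{M}_{x}(\mathcal{U})$. Nothing further is required here, since the hypothesis of that lemma is precisely condition $(1)$ and its conclusion is precisely condition $(2)$.

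For the direction $(2) \Rightarrow (1)$, I would appeal to the second lemma, which asserts that an $M$-invariant measure all of whose smooth sequences converge to $\mu$ is generically stable over $M$. The single subtlety to flag is the invariance hypothesis of that lemma: by definition a smooth sequence in $\mu$ over $M$ exists only when $\mu$ is invariant over $M$, so condition $(2)$ already carries $M$-invariance as a standing assumption. With invariance in hand, the hypotheses of the second lemma are met and we conclude that $\mu$ is generically stable over $M$.

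Thus the entire proof reduces to citing the two lemmas in turn, together with the observation that condition $(2)$ presupposes $M$-invariance (forced by the very definition of a smooth sequence), so that the second lemma is applicable. I do not anticipate any genuine obstacle: the conceptual and computational work has all been absorbed into the two lemmas, and the theorem is their immediate corollary.
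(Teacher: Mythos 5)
Your proposal matches the paper exactly: the paper's proof is a one-line citation of the two preceding lemmas, just as you describe. Your additional remark that condition (2) presupposes $M$-invariance (since a smooth sequence in $\mu$ over $M$ is only defined for $M$-invariant $\mu$) is a correct and harmless clarification of why the second lemma applies.
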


\begin{proof} Follows directly from the previous two lemmas.
\end{proof} 

\section{Local Measures revisited}
We generalize the main theorem of \cite{GannNIP}. Fix a partitioned NIP formula $\varphi(x,y)$ and let $\mu$ be a $\varphi$-measure. In \cite{GannNIP}, we proved two main theorems. We showed that if $\varphi(x,y)$ is an NIP formula and $\mu$ is $\varphi$-definable and finitely satisfiable over a \textbf{countable} model $M$, then $\mu$ is $\varphi$-finitely approximated. We then proved that if $\mu$ is definable and finitely satisfiable over any small model $M$, then $\mu$ is finitely approximated in $M$ by reducing to the previous theorem. But this was somewhat unsatisfactory and the following question was left open: if $\mu$ is $\varphi$-definable and finitely satisfiable over a \textbf{small} model, then is $\mu$ $\varphi$-finitely approximated? We give a positive answer to this question by modifying one of the important technical lemmas in the proof. Let us first recall some definitions. 

\begin{definition}\label{local} Fix $\mathcal{U}$ and a formula $\varphi(x,y)$ in $\mathcal{L}(\mathcal{U})$. 
\begin{enumerate} 
\item $\mathcal{L}_{\varphi}(\mathcal{U})$ denotes the Boolean algebra of definable sets of $\mathcal{U}^{x}$ generated by the collection $\{\varphi(x,b): b \in \mathcal{U}\}$. 
\item A $\varphi$-measure is a finitely additive measure on the Boolean algebra $\mathcal{L}_{\varphi}(\mathcal{U})$. 
\item The collection of all $\varphi$-measures is denoted $\mathfrak{M}_{\varphi}(\mathcal{U})$. 
\item Let $M \prec \mathcal{U}$ and assume that $M$ contains all the parameters from $\varphi(x,y)$. For any $\mu \in \mathfrak{M}_{\varphi}(\mathcal{U})$, we say that $\mu$ is $(M,\varphi)$-invariant if for any $b,c \in \mathcal{U}^{y}$ such that $\tp(b/M) = \tp(c/M)$, we have that $\mu(\varphi(x,b)) = \mu(\varphi(x,c))$. 
\item Let $\mu \in \mathfrak{M}_{\varphi}(M)$. If $\mu$ is $(M,\varphi)$-invariant, then we define can the fiber map $F_{\mu}^{\varphi}: S_{y}(M) \to [0,1]$ via $F_{\mu,M}^{\varphi}(q) = \mu(\varphi(x,b))$ where $b \models q|_M$. When $M$ is clear from context, we write $F_{\mu,M}^{\varphi}$ simply as $F_{\mu}^{\varphi}$. 
\item Let $\mu \in \mathfrak{M}_{\varphi}(\mathcal{U})$. Then $\mu$ is said to be $\varphi$-definable if the map $F_{\mu,M}^{\varphi}: S_{y}(M) \to [0,1]$ is continuous.
\item Let $\mu \in \mathfrak{M}_{\varphi}(\mathcal{U})$. Then $\mu$ is said to be definable if for any formula $\theta(x,\overline{y})$ in the algebra generated by $\{\varphi(x,y_i): i \in \mathbb{N}\}$, $\mu$ is $(M,\theta)$-invariant and the map $F_{\mu}^{\varphi}:S_{y}(M) \to [0,1]$ is continuous. 
\item For any $\mu \in \mathfrak{M}_{\varphi}(\mathcal{U})$, $\mu$ is said to be finitely satisfiable in $M$ if for every $\theta(x) \in \mathcal{L}_{\varphi}(\mathcal{U})$ such that $\mu(\theta(x)) > 0$, there exists some $a \in M$ so that $\mathcal{U} \models \theta(a)$. 
\item For each $a \in M$ we let $F_{a}^{\varphi}: S_{y}(M) \to [0,1]$ via $F_{a}^{\varphi} = \mathbf{1}_{\varphi(a,y)}$. We denote the collection of such functions as $\mathbb{F}_{M}$. We let $\conv(\mathbb{F}_M)$ be the collection of convex combinations of elements in $\mathbb{F}_{M}$. We let $F = [0,1]^{S_{y}(M)}$ endowed with the Tychonoff topology and if $A \subset F$, we let $\cl(A)$ denote its closure in this space and so the set $\cl(\conv(A))$ is well-defined. 
\end{enumerate}
\end{definition} 

\noindent Recall the following facts about $\varphi$-measures which can be found in \cite{GannNIP}.

\begin{fact}\label{local:facts} Let $\mu \in \mathfrak{M}_{\varphi}(\mathcal{U})$ and $M \prec \mathcal{U}$.
\begin{enumerate}[$(i)$]
\item If $\mu$ is finitely satisfiable or $\varphi$-definable over $M$ then $\mu$ is $(M,\varphi)$-invariant.
\item If $\mu$ is $\varphi$-definable over $M$ then $\mu$ is $(M_{0},\varphi)$-invariant for some $M_0 \prec M$ such that $|M_0| = \aleph_0$. 
\item If $\mu$ is finitely satisfiable over $M$ then $F_{\mu,M}^{\varphi}$ is in $\cl(\conv(\mathbb{F}_{M}))$.
\item If $|M| = \aleph_0$ and $\varphi(x,y)$ is NIP, there exists a sequence of elements $(g_i)_{i \in \omega}$ with each $g_i \in \conv(\mathbb{F}_M)$ so that $\lim_{i \to \infty} g_i = F_{\mu,M}^{\varphi}$. 
\end{enumerate}
\end{fact}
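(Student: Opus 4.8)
The four parts grow in depth, and the plan is to establish them in order, feeding each into the next.

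\textbf{Parts $(i)$ and $(ii)$} are the expected invariance-and-countability reductions. The $\varphi$-definable case of $(i)$ is immediate, since the fiber map $F_{\mu,M}^{\varphi}$ is only defined once $(M,\varphi)$-invariance is assumed (Definition \ref{local}), so continuity already presupposes it. For the finitely satisfiable case I would argue by contradiction: if $\tp(b/M) = \tp(c/M)$ but $\mu(\varphi(x,b) \wedge \neg\varphi(x,c)) > 0$ (a set in $\mathcal{L}_{\varphi}(\mathcal{U})$), then finite satisfiability gives $a \in M^{x}$ with $\mathcal{U} \models \varphi(a,b) \wedge \neg\varphi(a,c)$; but $M$ contains the parameters of $\varphi$, so $\varphi(a,y)$ is an $\mathcal{L}(M)$-formula and $\tp(b/M) = \tp(c/M)$ forces $\mathcal{U} \models \varphi(a,b) \leftrightarrow \varphi(a,c)$, a contradiction (and symmetrically). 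For $(ii)$, which mirrors Proposition \ref{m:countable}, I would use the local version of $(v)$ of Fact \ref{KM:imp}: a continuous $[0,1]$-valued function on the Stone space $S_{y}(M)$ is a uniform limit of simple functions with clopen pieces $[\psi^{n}_{i}(y)]$, $\psi^{n}_{i} \in \mathcal{L}_{y}(M)$. Taking a countable $M_0 \prec M$ containing the parameters of these countably many $\psi^{n}_{i}$, each approximant, hence their uniform limit $F_{\mu,M}^{\varphi}$, factors through $S_{y}(M) \to S_{y}(M_0)$, which is exactly $(M_0,\varphi)$-invariance.

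For \textbf{Part $(iii)$} the key observation is that the fiber map of an average measure already lies in $\conv(\mathbb{F}_M)$: for $\overline{a} = (a_1,\dots,a_n) \in (M^{x})^{<\omega}$ one has $F_{\Av(\overline{a})}^{\varphi} = \frac{1}{n}\sum_{j=1}^{n} F_{a_j}^{\varphi}$. So I would show every basic pointwise neighborhood of $F_{\mu,M}^{\varphi}$ meets $\conv(\mathbb{F}_M)$. Given types $q_1,\dots,q_k \in S_{y}(M)$, choose $b_j \models q_j$ and let $S_1,\dots,S_m$ be the atoms of the finite Boolean algebra generated by $\varphi(x,b_1),\dots,\varphi(x,b_k)$; by finite satisfiability pick $a_\ell \in M^{x} \cap S_\ell$ for each $\ell$ with $\mu(S_\ell) > 0$. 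Then $g = \sum_{\ell} \mu(S_\ell) F_{a_\ell}^{\varphi} \in \conv(\mathbb{F}_M)$ agrees with $F_{\mu,M}^{\varphi}$ at every $q_j$, since each $\varphi(x,b_j)$ is a union of atoms. This places $F_{\mu,M}^{\varphi}$ in $\cl(\conv(\mathbb{F}_M))$ and is the local analogue of $(iv)$ of Fact \ref{Avcls}.

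\textbf{Part $(iv)$} is where I expect the real difficulty, and the plan is to invoke the Bourgain--Fremlin--Talagrand machinery. By $(iii)$ the function $F_{\mu,M}^{\varphi}$ lies in the pointwise closure $\cl(\conv(\mathbb{F}_M))$, and since $|M| = \aleph_0$ the domain $S_{y}(M)$ is compact metrizable. Because $\varphi$ is NIP, the family $\{\varphi(a,y) : a \in M\}$ is a VC class, so $\mathbb{F}_M$, and hence $\conv(\mathbb{F}_M)$, is a uniformly bounded, pointwise relatively compact set of Baire-class-$1$ functions on the Polish space $S_{y}(M)$ with no independent (Rosenthal $\ell_1$) subsequence. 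The BFT theorem then gives that $\cl(\conv(\mathbb{F}_M))$ is a Rosenthal compactum, hence Fr\'echet--Urysohn, so every point of the closure -- in particular $F_{\mu,M}^{\varphi}$ -- is a pointwise limit of a sequence $(g_i)_{i \in \omega}$ in $\conv(\mathbb{F}_M)$, as desired. The delicate points, and the crux of the argument, are to verify that NIP of $\varphi$ supplies precisely the relative-compactness and no-$\ell_1$ hypotheses that BFT requires (the standard translation through Rosenthal's theorem, as in \cite{IBFT,SimonBFT,Khanaki1}), and to check that these hypotheses survive passage to the convex hull, which they do here.
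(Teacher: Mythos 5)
The paper does not actually prove this Fact---it is quoted from \cite{GannNIP}---so there is no in-paper argument to compare against; your proofs are correct and are the standard ones underlying that reference: the definitional/finite-satisfiability argument for $(i)$, parameter-counting through a countable family of approximating $\mathcal{L}_y(M)$-formulas for $(ii)$, the atoms-of-the-finite-Boolean-algebra computation (giving exact agreement of $\sum_\ell \mu(S_\ell)F^{\varphi}_{a_\ell}$ with $F^{\varphi}_{\mu,M}$ on the chosen types) for $(iii)$, and the Bourgain--Fremlin--Talagrand/Rosenthal argument, including the preservation of relative compactness in Baire class $1$ under convex hulls, for $(iv)$. The only caution is that $(iv)$ as stated leaves its hypotheses on $\mu$ implicit: your reading, which imports finite satisfiability over $M$ from $(iii)$ so that $F_{\mu,M}^{\varphi}$ lies in $\cl(\conv(\mathbb{F}_{M}))$, is sensible, but note that in the paper's actual application of $(iv)$ (Theorem \ref{main:Gan}) the closure membership is supplied by Lemma \ref{meas:lemma} rather than by $(iii)$; your BFT argument only uses that membership, so it covers the intended use.
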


\noindent The following lemma is essentially the \textit{missing lemma} from \cite{GannNIP}. The missed observation is that one can consider finitely many parameters at once (instead of a single parameter). 

\begin{lemma}\label{meas:lemma} Suppose that $\mu \in \mathfrak{M}_{\varphi}(\mathcal{U})$ and $\mu$ is finitely satisfiable in a small submodel $N$ and $(M,\varphi)$-invariant. Then the map $F_{\mu,M}^{\varphi} \in \cl(\conv(\mathbb{F}_M))$. 
\end{lemma}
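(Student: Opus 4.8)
The plan is to verify membership in $\cl(\conv(\mathbb{F}_M))$ straight from the definition of the Tychonoff closure on $[0,1]^{S_y(M)}$: since closure is detected on finitely many coordinates, $F_{\mu,M}^{\varphi}$ lies in $\cl(\conv(\mathbb{F}_M))$ exactly when, for every finite set $q_1,\dots,q_k \in S_y(M)$ and every $\epsilon>0$, there is some $g \in \conv(\mathbb{F}_M)$ with $|g(q_i)-F_{\mu,M}^{\varphi}(q_i)|<\epsilon$ for all $i\le k$. So I fix such $q_1,\dots,q_k$ and aim to produce a $g$ matching $F_{\mu,M}^{\varphi}$ on these coordinates (in fact exactly). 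After enlarging $N$ I may assume $M \prec N$.

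First I would realize the finitely many types \emph{at once}, which is the point of considering finitely many parameters simultaneously. Let $\bar{y}=(y_1,\dots,y_k)$, let $r_0 \in S_{\bar{y}}(M)$ be any completion of $q_1(y_1)\cup\dots\cup q_k(y_k)$, and let $\bar{b}=(b_1,\dots,b_k)$ realize an \emph{heir} of $r_0$ over $N$. Then $\tp(b_i/M)=q_i$, so $(M,\varphi)$-invariance gives $F_{\mu,M}^{\varphi}(q_i)=\mu(\varphi(x,b_i))$. Now partition $\mathcal{U}^{x}$ into the $2^k$ atoms $\psi_S(x):=\bigwedge_{i\in S}\varphi(x,b_i)\wedge\bigwedge_{i\notin S}\neg\varphi(x,b_i)$ for $S\subseteq\{1,\dots,k\}$, and set $w_S:=\mu(\psi_S)$; these are nonnegative, sum to $1$, and satisfy $\mu(\varphi(x,b_i))=\sum_{S\ni i}w_S$.

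The crucial step is to realize every positive-measure atom inside $M$, against the \emph{same} tuple $\bar{b}$. Fix $S$ with $w_S>0$. Since $\mu$ is finitely satisfiable in $N$, there is $c\in N^{x}$ with $\models\psi_S(c)$. The formula $\theta(\bar{y},z):=\bigwedge_{i\in S}\varphi(z,y_i)\wedge\bigwedge_{i\notin S}\neg\varphi(z,y_i)$ is over $M$ (as $M$ contains the parameters of $\varphi$), its parameter $c$ lies in $N$, and $\models\theta(\bar{b},c)$; because $\tp(\bar{b}/N)$ is an heir of $r_0$, I can pull this parameter back into $M$, obtaining $a_S\in M^{x}$ with $\models\theta(\bar{b},a_S)$, i.e. $\models\psi_S(a_S)$. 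Thus $a_S$ realizes the pattern $S$ against $\bar{b}$. Setting $g:=\sum_{S:\,w_S>0}w_S\,\mathbf{1}_{\varphi(a_S,y)}\in\conv(\mathbb{F}_M)$, I compute $g(q_i)=\sum_{S:\,w_S>0}w_S\,\mathbf{1}[\,\models\varphi(a_S,b_i)\,]=\sum_{S\ni i}w_S=\mu(\varphi(x,b_i))=F_{\mu,M}^{\varphi}(q_i)$, which even yields exact agreement on the chosen coordinates.

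I expect the main obstacle to be choosing the right realization of the $q_i$. The naive attempt, taking $\bar{b}$ to be a \emph{coheir} of $r_0$ so that $\tp(\bar{b}/N)$ is finitely satisfiable in $M$, lets one move the object witness $c$ into $M$ but only against \emph{displaced} parameters $\bar{a}\in M$, destroying the link to the target types $q_i$; the symmetric difficulty appears if one tries to move the parameters $\bar{b}$ themselves. The insight that resolves this is that it is the \emph{heir} of the joint tuple $\bar{b}$ (not the coheir) that keeps $\bar{b}$ fixed while pulling the object-side witnesses of the atoms back into $M$, which is exactly what makes the final convex combination land on the correct coordinates $q_i$. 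I note that, somewhat surprisingly, this argument does not use the NIP hypothesis on $\varphi$.
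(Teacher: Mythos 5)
Your proof is correct, and at its core it is the dual formulation of the paper's argument: the paper realizes a copy $N_1$ of $N$ so that $\tp(N_1/M\bar b)$ is a coheir of $\tp(N_1/M)$, whereas you realize $\bar b$ so that $\tp(\bar b/N)$ is an heir of $\tp(\bar b/M)$ --- by the standard heir/coheir duality these are the same configuration, and in both cases the point is that a $\varphi$-pattern witnessed in $N$ against $\bar b$ can be transferred to a witness in $M$ against the \emph{same} $\bar b$, preserving the types $q_i$ over $M$. Where you genuinely diverge is in the finishing move: the paper first takes an average tuple $\overline{c}\in (N^x)^{m}$ with $\Av(\overline{c})(\varphi(x,b_i'))\approx_{\epsilon}\mu(\varphi(x,b_i'))$ and transfers the whole $m$-tuple at once, yielding $\epsilon$-agreement by an equal-weights average; you instead decompose $\mu$ exactly over the $2^k$ atoms of the Boolean algebra generated by $\varphi(x,b_1),\dots,\varphi(x,b_k)$, realize each positive-measure atom in $N$, pull each witness into $M$ by one application of the heir property, and weight by the true masses $w_S$. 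This buys exact agreement on the chosen coordinates and avoids the compactness step producing $b_1',\dots,b_n'$, at the cost of using real (rather than average) convex coefficients --- which is harmless, since $\conv(\mathbb{F}_M)$ as defined in the paper allows arbitrary convex combinations, and the subsequent application in Theorem \ref{main:Gan} re-approximates by averages anyway. Both arguments correctly avoid any use of NIP, as you note.
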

\begin{proof} The proof is similar to the proof for types \cite[Lemma 2.18]{Sibook} as well as the proof for measures \cite[Proposition 4.13]{GannNIP} (which has both a stronger assumption and conclusion). It suffices to show that for any finite collection of types $p_1,...,p_n \in S_{y}(M)$ and $\epsilon > 0$ there exists $\overline{a} \in (M^{x})^{< \omega}$ such that $F_{\Av(\overline{a}),M}^{\varphi}(p_i) \approx_{\epsilon} F_{\mu,M}^{\varphi}(p_i)$ for each $i \leq n$. 

Fix $p_1,...,p_n \in S_{y}(M)$ and $\epsilon >0$. Choose $b_i \models p_i$ for $i \leq n$. Let $q = \tp(N/M) \in S_{|N|}(M)$. Let $\hat{q} \in S_{|N|}(\mathcal{U})$ such that $\hat{q} \supset q$ and $\hat{q}$ is finitely satisfiable in $M$, i.e. $\hat{q}$ is a global coheir of $q$. Let $N_{1} \models \hat{q}|_{Mb_1,...,b_n}$. 

By compactness, there exists elements $b_1',...,b_n' \in \mathcal{U}$ such that $\tp(N_1 b_1,...,b_n/M) = tp(Nb_1',...,b_n'/M)$. Since $\mu$ is $(M,\varphi)$-invariant, we have that 
\begin{equation*} F_{\mu,M}^{\varphi}(p_i) = \mu(\varphi(x,b_i)) = \mu(\varphi(x,b'_i)),
\end{equation*} 
for each $i \leq n$. Since $\mu$ is finitely satisfiable in $N$, there exists some $m$ and $\overline{c} \in (N^{x})^{m}$ such that $\Av(\overline{c})(\varphi(x,b'_i)) \approx_{\epsilon} \mu(\varphi(x,b_i'))$ for $i \leq n$. Let $B_i =\{j \leq m: \models \varphi(c_j,b'_i)\}$. Now consider the formula
\begin{equation*}
    \theta(x_1,...,x_m,y_1,...y_n) = \bigwedge_{i \leq n} \Big( \bigwedge_{j\in B_i} \varphi(x_{j},y_{i}) \wedge \bigwedge_{j \not \in B_i} \neg \varphi(x_{j},y_{i}) \Big).
\end{equation*}
By construction $\theta(\overline{x},\overline{y}) \in \tp(\overline{c},\overline{b'}/M)$ and so for an appropriate choice of indices, $\theta(\overline{x},\overline{y}) \in \tp(Nb_1',...,b_n'/M)$. Hence $\theta(\overline{x},\overline{y}) \in \tp(N_1b_1,...,b_n/M)$ and so $\theta(\overline{x},\overline{b}) \in \tp(N_1/Mb_1,...,b_n) \subset \hat{q}$. Since $\hat{q}$ is finitely satisfiable in $M$, there exists $\overline{a} \in (M^{x})^{m}$ such that $\models \theta(\bar{a},\bar{b})$. By construction, we have that for any $i \leq n$, 
\begin{equation*}
    F_{\Av(\overline{a}),M}^{\varphi}(p_i) = \Av(\overline{a})(\varphi(x,b_i)) = \Av(\overline{c})(\varphi(x,b'_i)) \approx_{\epsilon} \mu(\varphi(x,b'_i)) = F_{\mu,M}^{\varphi}(p_i).
\end{equation*}
This concludes the proof.
\end{proof}

\begin{theorem}\label{main:Gan} Fix a formula $\varphi(x,y)$ and a small model $M$ containing all the parameters from $\varphi(x,y)$. Assume that $\mu \in \mathfrak{M}_{\varphi}(\mathcal{U})$. If
\begin{enumerate}
    \item $\varphi(x;y)$ is NIP,
    \item $\mu$ is $\varphi$-definable over $M$, 
    \item and $\mu$ is finitely satisfiable in $M$,
\end{enumerate}
Then for every $\epsilon > 0$, there exists $a_1,...,a_n \in M^{x}$ such that, 
\begin{equation*}
    \sup_{b \in \mathcal{U}^{y}}|\mu(\varphi(x,b)) - \Av(\overline{a})(\varphi(x,b))| < \epsilon. 
\end{equation*}
\end{theorem}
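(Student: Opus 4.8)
The plan is to reduce to a countable model over which the Bourgain--Fremlin--Talagrand machinery produces an honest \emph{sequence}, and then to run the Mazur-type argument already used in Proposition \ref{Mazur} and in \cite[Theorem 4.8]{GannNIP}. The entire purpose of Lemma \ref{meas:lemma} is that it decouples the model over which we take fibers from the model over which $\mu$ is finitely satisfiable, and this is exactly what licenses the reduction. First, since $\mu$ is $\varphi$-definable over $M$, Fact \ref{local:facts}$(ii)$ provides a countable $M_0 \prec M$ over which $\mu$ is $(M_0,\varphi)$-invariant; enlarging $M_0$ by the finitely many parameters of $\varphi$ (invariance persists when passing to a larger countable submodel), we may assume $\varphi$ is over $M_0$ and $\mu$ is $(M_0,\varphi)$-invariant. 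I claim $\mu$ is moreover $\varphi$-definable over $M_0$: the projection $\pi : S_{y}(M) \to S_{y}(M_0)$ is a continuous surjection of compact Hausdorff spaces, hence a quotient map, and by $(M_0,\varphi)$-invariance $F_{\mu,M}^{\varphi} = F_{\mu,M_0}^{\varphi} \circ \pi$; since $F_{\mu,M}^{\varphi}$ is continuous and $\pi$ is a quotient map, $F_{\mu,M_0}^{\varphi}$ is continuous.

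Next I would apply Lemma \ref{meas:lemma} with fiber-model $M_0$ and satisfiability-model $N = M$: as $\mu$ is finitely satisfiable in the small model $M$ and is $(M_0,\varphi)$-invariant, we obtain $F_{\mu,M_0}^{\varphi} \in \cl(\conv(\mathbb{F}_{M_0}))$. This is the crucial step, and the reason the new lemma is needed: although $\mu$ need not be finitely satisfiable over the countable $M_0$, Lemma \ref{meas:lemma} still places its fiber map in the pointwise closure of $\conv(\mathbb{F}_{M_0})$. Now $M_0$ is countable, so $S_{y}(M_0)$ is a compact metric space, and $\varphi$ being NIP makes the relevant closure angelic via BFT; hence---exactly as in Fact \ref{local:facts}$(iv)$, whose only use of finite satisfiability was to place the fiber map in this closure---there is a \emph{sequence} $(g_i)_{i \in \omega}$ in $\conv(\mathbb{F}_{M_0})$ with $g_i \to F_{\mu,M_0}^{\varphi}$ pointwise.

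Finally I would run the Mazur argument verbatim. Each $g_i$ is a continuous function bounded by $1$ and the pointwise limit $F_{\mu,M_0}^{\varphi}$ is continuous, so by the Riesz representation theorem together with the dominated convergence theorem---valid precisely because $(g_i)$ is a sequence---we get $g_i \to F_{\mu,M_0}^{\varphi}$ weakly in $C(S_{y}(M_0))$. Mazur's lemma then yields convex combinations of the $g_i$ converging uniformly to $F_{\mu,M_0}^{\varphi}$; any such convex combination again lies in $\conv(\mathbb{F}_{M_0})$, so it equals $F_{\Av(\overline{a}),M_0}^{\varphi}$ for some $\overline{a} \in (M_0^{x})^{<\omega} \subseteq (M^{x})^{<\omega}$. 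Thus for a suitable $\overline{a}$ we have $\sup_{q \in S_{y}(M_0)} | F_{\mu,M_0}^{\varphi}(q) - F_{\Av(\overline{a}),M_0}^{\varphi}(q) | < \epsilon$; since both $\mu$ and $\Av(\overline{a})$ are $(M_0,\varphi)$-invariant and every type in $S_{y}(M_0)$ is realized in $\mathcal{U}$, this translates into
\begin{equation*}
\sup_{b \in \mathcal{U}^{y}} | \mu(\varphi(x,b)) - \Av(\overline{a})(\varphi(x,b)) | < \epsilon,
\end{equation*}
as required.

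The main obstacle is the gap between the \emph{pointwise} closure supplied by Lemma \ref{meas:lemma}---which a priori is only a net statement---and the \emph{uniform} approximation demanded by the conclusion: nets need not obey the dominated convergence theorem, so one cannot pass directly from pointwise to weak convergence in $C(S_{y}(M))$. The reduction to the countable model $M_0$ is exactly what dissolves this difficulty, because over a countable (hence metrizable) type space the NIP/BFT angelicity upgrades the net to a convergent sequence, after which dominated convergence and Mazur's lemma apply just as in the countable-model case of \cite{GannNIP}.
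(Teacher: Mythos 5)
Your proposal is correct and follows essentially the same route as the paper: reduce to a countable $M_0$ using $(M_0,\varphi)$-invariance, invoke Lemma \ref{meas:lemma} (with $M$ as the satisfiability model and $M_0$ as the fiber model) to place $F_{\mu,M_0}^{\varphi}$ in $\cl(\conv(\mathbb{F}_{M_0}))$, upgrade to a convergent sequence via NIP/BFT as in Fact \ref{local:facts}$(iv)$, and finish with the Riesz--dominated convergence--Mazur argument of Proposition \ref{Mazur}. The only cosmetic difference is your extra verification that $F_{\mu,M_0}^{\varphi}$ is continuous and your slightly too-quick identification of an arbitrary convex combination with an average measure (one should pass to rational coefficients, as the paper does), neither of which affects the argument.
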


\begin{proof} We remark that the proof is similar to that of Proposition \ref{Mazur}. Since $\mu$ is $\varphi$-definable over $M$, $\mu$ is $(M_0,\varphi)$-invariant where $M_0$ is a countable submodel of $M$. By Lemma \ref{meas:lemma}, the map $F_{\mu,M_{0}}^{\varphi} \in \cl(\conv(\mathbb{F}_{M_0}))$. By Fact \ref{local:facts}, there exists a sequence $(g_i)_{i \in I}$ so that $\lim_{i \to \infty} g_i = F_{\mu,M_0}^{\varphi}$. By Mazur's lemma, for every $\epsilon > 0$, there exists a finite set $I \subset \mathbb{N}$ and positive real numbers $\{r_i: i \in I\}$ such that $\sum_{i \in I} r_i = 1$ and
\begin{equation*}
\sup_{q \in S_{y}(M_{0})} |F_{\mu,M_0}^{\varphi}(q) - \sum_{i \in I} r_i g_i(q)| < \epsilon. 
\end{equation*}
The map $\sum_{i \in I} r_i g_{i}$ can clearly be uniformly approximated by an average function. More explicitly, there exists $ \overline{d} \in (M^{x})^{<\omega}$ such that 
\begin{equation*} \sup_{q \in S_{y}(M)} |\sum_{i \in I} r_i g_i (q) - F^{\varphi}_{\Av(\overline{d}),M}(q)| <\epsilon. 
\end{equation*} 
Hence
\begin{equation*} \sup_{b \in \mathcal{U}^{y}}|\mu(\varphi(x,b)) - \Av(\overline{d})(\varphi(x,b))| = \sup_{q \in S_{y}(M)} |F_{\mu,M}^{\varphi}(q) - F_{\Av(\overline{d}),M}^{\varphi}(q)| < 2\epsilon. 
\end{equation*} 
which completes the proof. 
\end{proof}

\bibliographystyle{amsplain}

\end{document}